\DeclareMathOperator{\rank}{rank}
\DeclareMathOperator{\diag}{diag}
\DeclareMathOperator{\supp}{supp}
\DeclareMathOperator{\spann}{span}
\DeclareMathOperator{\diam}{diam}
\DeclareMathOperator{\dist}{dist}
\DeclareMathOperator{\vol}{vol}
\newtheorem{theorem}{Theorem}
\newtheorem{lemma}{Lemma}
\newtheorem{example}{Example}
\newtheorem{remark}{Remark}
\newtheorem*{keywords}{Keywords}
\def\eps{\varepsilon}
\newcommand{\RR}{\mathbb{R}}
\newcommand{\ZZ}{\mathbb{Z}}
\newcommand{\NN}{\mathbb{N}}
\newcommand{\SSS}{\mathbb{S}}
\newcommand{\TT}{\mathbb{T}}
\def\cL{\mathcal{L}}
\def\cA{\mathcal{A}}
\def\cO{\mathcal{O}}
\def\cM{\mathcal{M}}
\def\cE{\mathcal{E}}
\def\cX{\mathcal{X}}
\def\hu{{\hat u}}
\def\matr#1{{\left[\begin{matrix} #1
\end{matrix}\right]}}
\def\Wl{W_{\ell}}
\begin{document}

\title{
Error Bounds for a Least Squares Meshless Finite Difference Method on Closed Manifolds}%

\author{Oleg Davydov\thanks{Department of Mathematics, University of Giessen, Arndtstrasse 2, 
35392 Giessen, Germany, \tt{oleg.davydov@math.uni-giessen.de}}
}%

\maketitle

\begin{abstract}
We present an error bound for a least squares version of the kernel based meshless finite difference method for elliptic
differential equations on smooth compact manifolds of arbitrary dimension without boundary. In particular, we obtain
sufficient conditions for the convergence of this method. Numerical examples are provided for the equation 
$-\Delta_\cM u + u = f$ on the 2- and 3-spheres, where $\Delta_\cM$ is the Laplace-Beltrami operator.
 \begin{keywords}
 RBF-FD, kernel based methods, PDE on manifolds, error bounds, meshless finite difference method, generalized finite differences
\end{keywords}
\end{abstract}

\section{Introduction}\label{intro}
Let $\cM$ be a 
 smooth compact manifold of dimension $d$ without boundary, 
 and let $L$ be an elliptic differential 
operator of order $2\kappa$, $\kappa\in \NN$, on $\cM$ with 
 infinitely differentiable coefficients 
 in local coordinates,  and a trivial null space. 
Then the equation 
\begin{equation}\label{pde}
L u=f
\end{equation}
has a unique solution $u$ in the Sobolev space $H^t(\cM)$, $t\in\RR$,  whenever $f\in H^{t-2\kappa}(\cM)$, and
there are constants $A,B>0$, depending only on $\cM,L,t$ and the choice of the Sobolev norms, such that 
\begin{equation}\label{reg}
A\|f\|_{H^{t-2\kappa}(\cM)}\le \|u\|_{H^t(\cM)}\le B\|f\|_{H^{t-2\kappa}(\cM)},
 \end{equation}
see e.g.\ \cite[Section 6]{Agranovich15}.

Meshless numerical methods are particularly attractive for solving operator equations on manifolds, in particular on the
sphere  \cite{FFprimer15,Freeden98}, because of the difficulties
of creating and maintaining suitable meshes or grids in this setting. 
Error bounds for meshless methods for the approximation of functions and specifically 
solutions of operator equations on manifolds have been studied for example in 
\cite{ChenLing20,CheungLing18,FuselierWright12,HNRW18,HNW10,LevesleyLuo03,LevesleyRagozin07,MortonNeamtu02,Narcowich95,NRW17,PhamTran14}. However, these
results do not apply to localized finite difference type methods considered below.

Meshless finite difference methods discretize a differential equation  \eqref{pde} 
(or similarly a boundary value problem)
on a set of irregular nodes $X=\{x_1,\ldots,x_n\}\subset\cM$  with the help of numerical differentiation formulas
\begin{equation}\label{ndf}
Lu(x_i)\approx\sum_{j\in J_i}w_{ij} u(x_j),\quad J_i\subset J:=\{1,\ldots,n\},\qquad i\in I,
\end{equation}
where the coefficients $w_{ij}\in\RR$ are obtained by requiring that the formula is exact for certain finite dimensional spaces
of functions, for example polynomials or kernel sums or a combination thereof, and the size of the sets of influence $X_i=\{x_j: j\in J_i\}$ is bounded by a 
fixed number $\nu\ll n$. In particular, the
kernel-based formulas are exact for all linear combinations
$\sum_{j\in J_i}c_j K(\cdot,x_j)$, $c_j\in\RR$, where $K:\cM\times\cM\to\RR$ is a positive definite kernel \cite{D21}. The
discrete approximate solution $\hu\in\RR^n$ of \eqref{pde}, such that $\hu_i\approx u(x_i)$, $i=1,\ldots,n$,
 is obtained by solving the sparse linear system
\begin{equation}\label{mfd}
\sum_{j\in J_i}w_{ij} \hu_j=f(x_i), \quad i\in I.
\end{equation}

Numerical performance of methods of this type 
has been studied 
e.g.\ in the book \cite{FFprimer15} and papers \cite{BFFB17,FLBWS12,LSH17,PLR18,Shankar17,SWKF15,SDKWT19online,SDT19,SuchdeKuhnert19}, in particular for
differential equations on the sphere and other manifolds.
In contrast to those meshless methods that discretize the weak form of the equations 
(see the surveys \cite{Belytschko96,NguRabBorDuf}), 
no integration of the trial functions over subdomains is needed, 
which is challenging in the meshless setting as the subdomains are not generated 
from a few reference shapes controlled by the mesh.
In contrast to the global collocation methods  (see e.g.~\cite{Kansa90b,PowerBarraco02}),
the linear systems are sparse. Moreover, by optimizing the selection of the sets of influence $J_i$, they may be made as  
sparse as they are in the mesh based methods, such as the finite element method, with comparable accuracy
\cite{D19arxiv,DavyOanh11,DOT20,DOT23,DavySaf21,ODP17}.  However, error bounds for meshless finite difference methods are
underdeveloped.

In the classical setting  $I=\{1,\ldots,n\}$, and hence \eqref{mfd} is a square linear system. 
An extension 
of the classical error analysis that requires that the system matrix of \eqref{mfd} is an $M$-matrix has been proposed 
in \cite{DKL84}, and conditions for the $M$-matrix property were investigated in \cite{DKL84,Seibold08}. However, this
approach only applies to special geometric configurations of the sets of influence with low convergence order, 
and no general theory exists. Moreover, except of the situations when the $M$-matrix property can be shown, no conditions
are known to guarantee the solvability of the square system \eqref{mfd}, even if the method performs very well in numerical
experiments.

By using more than $n$ numerical differentiation
formulas \eqref{ndf} we arrive at an overdetermined system, and $\hat u$ may be obtained by the least squares minimization.
In \cite{LSH17,TLH21} this situation arises from evaluating $Lu$ in the left hand side of \eqref{ndf} on a different set of nodes,
larger than $X$. This is similar to the ``overtesting'' mode that admits, for the collocation and other methods 
employing trial functions,  an error analysis under very general conditions
\cite{Schaback10,Schaback16}, which however does not cover 
finite difference type methods \cite{Schaback_error_analysis17}. 
The papers we just mentioned also discuss numerical and theoretical motivations for using overtesting 
instead of the classical setting of a square linear system.
An error analysis of the overtesting least squares method is provided in \cite{TLH21} with the help of  trial spaces defined
by piecewise radial basis functions over the Voronoi tessalation of the domain with respect to the nodes in $X$. The
overtesting is supposed to be dense enough, such that the discrete bilinear form that defines the least squares method
inherits coercitivity from the continuous bilinear form in a continuous least squares formulation of the boundary value
problem.


In this paper we derive error bounds for an overdetermined version of the meshless finite difference method
for elliptic differential equations on smooth closed manifolds, solved by least
squares, where the numerical differentiation formulas \eqref{ndf} are generated in blocks corresponding to a family of
overlapping subsets $J_\ell\subset J$, 
\begin{equation}\label{ndfmy}
[Lu(x_j)]_{j\in J_\ell}\approx W_\ell [u(x_j)]_{j\in J_\ell},\quad \ell=1,\ldots,m.
\end{equation}
The local differentiation matrices $W_\ell$ are invertible since they correspond to a reproducing kernel $K$ for a Sobolev
space $H^s(\cM)$, such that $L$ is positive and self-adjoint with respect to the inner product defined by $K$. This condition
in particular implies that the system matrix has full rank and hence there exists a unique least squares solution 
of the overdetermined linear system
\begin{equation}\label{mfdmy}
W_\ell [\hu_j]_{j\in J_\ell}=[f(x_j)]_{j\in J_\ell},\quad \ell=1,\ldots,m. 
\end{equation}

Under certain regularity assumptions that involve in particular the quasi-uniformity of $X$ and the existence of an 
atlas $\cA=\{(U_\ell,\varphi_\ell)\}_{\ell=1}^m$ with sufficiently nice $U_\ell\subset\cM$ 
such that $X\cap U_\ell=X_\ell:=\{x_j:j\in J_\ell\}$ and $X_\ell,X_k$ significantly
overlap whenever $U_\ell\cap U_k\ne\emptyset$, we show in Theorem~\ref{error_bound} that 
$$
\max_{j\in J}|u(x_j)-\hat u_j|=\mathcal{O}(h_\cA^{s-2\kappa-d-r_0}),
\qquad r_0:=\max\{0,\lfloor \tfrac{d}{2}\rfloor-2\kappa+1\},$$
and there exists $\tilde u\in H^{s+2\kappa}(\cM)$ such that $\tilde u(x_j)=\hu_j$, $j\in J$, and 
$$
\|u-\tilde u\|_{H^{2\kappa+r}(\cM)}=\mathcal{O}(h_\cA^{s-2\kappa-d-r}),\qquad 0\le r<s-\kappa,$$
where $h_\cA$ is the maximum diameter  of the sets $\varphi_\ell(U_\ell)$. This shows the pointwise 
convergence of the method for $h_\cA\to 0$ when $s>\max\{2\kappa,\tfrac{d}{2}+1\}+d$, and convergence in $H^{2\kappa}(\cM)$
when $s>2\kappa+d$.

The results apply in particular to the operators of the form $L=(-\Delta_\cM+\alpha I)^\kappa$, $\kappa\in\NN$,
$\alpha>0$, where $\Delta_\cM$ is the Laplace-Beltrami operator, $I$ is the identity, and $\cM=\SSS^d$ is the $d$-dimensional
sphere. In this case the differentiation matrices $W_\ell$ may be efficiently computed by employing  
restrictions to $\SSS^d$ of the Mat\'ern or Wendland
kernels for the ambient Euclidean space $\RR^{d+1}$. Numerical experiments in the case $\alpha=\kappa=1$ and $d\in\{2,3\}$
provided in Section~\ref{numerics} indicate higher convergence orders than those shown in Theorem~\ref{error_bound}.

The paper is organized as follows. Sections~\ref{prem} and \ref{aux} are devoted to auxiliary statements on various topics needed in the main parts
of the paper, such as equivalent norms for Sobolev spaces on manifolds, properties of  reproducing kernels and self-adjoint
differential operators, sampling inequalities and local differentiation matrices. 
Our least squares meshless finite difference method is presented in Section~\ref{method}, the error bounds in 
Section~\ref{estimates}, numerical examples in Section~\ref{numerics}, and a short conclusion in Section~\ref{conclusion}.


\section{Preliminaries}\label{prem}

We denote the partial derivative of a real function $u$ defined on a subset of $\RR^d$ by 
$\partial^\alpha u:=\frac{\partial^{|\alpha|}u}{\partial x_1^{\alpha_1}\cdots\partial x_d^{\alpha_d}}$, where 
$\alpha\in\ZZ_+^d$ and $|\alpha|:=\sum_{i=1}^d\alpha_i$. We will extensively use the Leibniz product rule and the estimates that follow from
the multivariate chain rule, see e.g.\ Sections 1.2, 1.63 and 3.41 in \cite{Adams03}.

The cardinality of a finite set $X$ will be denoted $\#X$, and $\partial S$ will stand for the boundary of 
a set $S$ in $\RR^d$ or $\cM$. We will use the usual notations $C(S)$, $C^\infty(S)$, $L^\infty(S)$, $L^2(S)$ for the spaces of continuous, infinitely differentiable,
essentially bounded or square integrable functions on appropriate subsets of $\RR^d$ or $\cM$.

Apart from the usual restriction  $f|_S$ for a subset $S$ of the domain of definition
of a function $f$, we denote by $f|_{X}$  the vector $[f(x)]_{x\in X}$ when $X$ is a finite set, 
and by $w|_I$  the vector $[w_j]_{j\in I}$, for any  $w=[w_j]_{j\in J}$ and a subset $I\subset J$. 

In what follows 
various ``constants'' denoted $C$, $C_1$, $C_2$, etc.\ will be different at different occurrences and depend on 
the manifold $\cM$ (and thus on $d$) on default.  We will explicitely list other parameters these constant may 
depend on in each case unless stated otherwise.

\subsection{Sobolev spaces on manifolds}\label{Sob}

Recall that in the case of integer $s\ge0$, Sobolev spaces $H^s(\Omega)$ are defined on any open set 
$\Omega$ as Hilbert spaces with the norm \cite{Adams03}
\begin{equation}\label{Sobint}
\|u\|_{H^s(\Omega)}=\Big(\sum_{k=0}^s|u|_{H^k(\Omega)}^2\Big)^{1/2},\quad 
|u|_{H^k(\Omega)}:=\Big(\sum_{|\alpha|= k}\|\partial^\alpha u\|_{L^2(\Omega)}^2\Big)^{1/2},
\end{equation}
in particular, $H^0(\Omega)=L^2(\Omega)$. When $s\in\RR\setminus\ZZ_+$ and $\Omega$ is a Lipschitz domain in 
$\RR^d$, we refer to \cite{Agranovich15} for the definition of $H^s(\Omega)$ as the restriction to $\Omega$ of the space 
$H^s(\RR^d)$ defined as the Bessel potential space. The Bessel norm for $H^s(\RR^d)$ is equivalent to 
$\|\cdot\|_{H^s(\RR^d)}$ of \eqref{Sobint}
when $s\in\ZZ_+$. However, the  definition via restriction, if applied to non-Lipschitz domains and
integer $s$, leads in general to smaller spaces due to the lack of extension of some functions with a finite norm 
\eqref{Sobint}, see e.g.~\cite[p.~287]{Burenkov98}. Even for Lipschitz domains, where extension theorems guarantee the equivalence of \eqref{Sobint}
to the restriction norm, the constants of equivalence depend on $\Omega$ and must be taken into account when they influence
estimates that have to be uniform with respect to $\Omega$. 

Following the standard definition of Sobolev spaces on smooth manifolds,  we pick a finite smooth atlas 
$\hat\cA=\{(\hat U_\ell,\hat\varphi_\ell)\}_{\ell=1}^{\hat n}$ and a smooth subordinate
partition of unity $\hat\Gamma=\{\hat\gamma_\ell\}_{\ell=1}^{\hat n}$, see e.g.~\cite{Lee13}, such that $\hat\gamma_\ell\in C^\infty(\cM)$, 
$\hat\gamma_\ell\ge0$, $\supp \hat\gamma_\ell\subset \hat U_\ell$,
$\sum_{\ell=1}^{\hat n}\hat\gamma_\ell=1$, and fix a norm for the Sobolev space $H^s(\cM)$, $s\in\RR$, in  the form
\begin{equation}\label{Snorm}
\|u\|_{H^s(\cM)}=\Big(\sum_{\ell=1}^{\hat n}\|u\hat\gamma_\ell\circ \hat\varphi_\ell^{-1}\|_{H^s(\RR^d)}^2\Big)^{1/2}.
\end{equation}

Below, when we speak of a norm for $H^s(\Omega)$ or $H^s(\cM)$ we will 
always assume that it is generated by an inner product. We reserve the notations $\|\cdot\|_{H^s(\Omega)}$, 
$\Omega\subset\RR^d$, and $\|\cdot\|_{H^s(\cM)}$, $\cM$ a manifold, for the specific norms defined in \eqref{Sobint} 
and \eqref{Snorm}, respectively. Different norms for  $H^s(\Omega)$ and $H^s(\cM)$ defined by reproducing kernels will 
be discussed in Section~\ref{pdk}.

\subsection{Reproducing kernels}\label{pdk} 

We first recall some basic facts about reproducing kernels and kernel based interpolation, see 
\cite{Aronszajn50,Buhmann03,Fasshauer07,FasshauerMcCourt15,PaulsenRaghupathi16,Wendland} for details, 
and then discuss reproducing kernels for Sobolev spaces.

Functions $K:\Omega\times \Omega\to\RR$ will be called \emph{kernels} on $\Omega$, where $\Omega$ is any set.
For a kernel $K$ and two sets $S,T$, $K|_{S,T}$ stands for the restriction of $K$ to 
$S\times T$, and for two finite sets $X,Y$ we also denote by 
$K|_{X,Y}$ the matrix $[K(x,y)]_{x\in X,\,y\in Y}$. 
To simplify the notation we set $K|_S:=K|_{S,S}$ and $K|_X:=K|_{X,X}$. 
For an operator $A$ that can be applied to one or both of the 
arguments of the kernel $K$, we write $A_1K$ or $A_2K$  in order to clarify whether $A$ is applied to 
the first or the second argument. We will also use the notation $K_A:=A_1K$.

A kernel $K:\Omega\times \Omega\to\RR$ is said to be \emph{symmetric} if $K(x,y)=K(y,x)$ for all $x,y\in\Omega$, and \emph{positive (semi-)definite} if 
the matrix $K|_X$ is positive (semi-)definite for  any finite subset $X$ of $\Omega$.


If $K$ is a symmetric positive semi-definite kernel on a set $\Omega$, then there is a unique
Hilbert space  $H_K=H_K(\Omega)$ of functions on $\Omega$ called \emph{native space} of $K$, with 
the inner product denoted by $(\cdot,\cdot)_K=(\cdot,\cdot)_{K,\Omega}$ and norm by 
$\|\cdot\|_{K}=\|\cdot\|_{K,\Omega}$, such that 
\begin{align}
\label{repr1}
K(\cdot,y)\in H_K&\quad\text{for all}\quad y\in\Omega,\\
\label{repr2}
(f,K(\cdot,y))_K=f(y)&\quad\text{for all}\quad f\in H_K,\quad y\in\Omega.
\end{align}
A Hilbert space $H$ of functions on $\Omega$ admits a kernel $K$ with these properties if and only if the linear functional 
$\delta_xf:=f(x)$ of point evaluation is bounded on $H$ for all $x\in\Omega$. Then $K$ is a \emph{reproducing kernel} of $H$, and $H$ is said to be a 
\emph{reproducing kernel Hilbert space}.

The identity \eqref{repr2} means that $K(\cdot,y)$ is the  Riesz representer of the linear functional $\delta_y\in H_K^*$.
Moreover, the kernel $K$ delivers Riesz representers for all bounded linear functionals.

\begin{lemma}[\cite{Wendland}, Theorem 16.7]\label{gammaK}
For any bounded linear functional $\gamma\in H_K^*$, the Riesz representer of $\gamma$ is the function
$\gamma_1K=\gamma_2K$ that belongs to $H_K$.
\end{lemma}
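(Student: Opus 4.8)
The plan is to prove Lemma~\ref{gammaK} directly from the Riesz representation theorem and the reproducing property \eqref{repr2}. First I would invoke the Riesz representation theorem: since $\gamma$ is a bounded linear functional on the Hilbert space $H_K$, there is a unique $v_\gamma\in H_K$ with $\gamma(f)=(f,v_\gamma)_K$ for all $f\in H_K$. The goal is then to show $v_\gamma=\gamma_1 K=\gamma_2 K$, where $\gamma_1 K$ means $\gamma$ applied to $x\mapsto K(x,y)$, yielding a function of $y$, and $\gamma_2 K$ means $\gamma$ applied to $y\mapsto K(x,y)$, yielding a function of $x$.

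The key computation uses \eqref{repr2} twice. For any fixed $y\in\Omega$, apply the defining property of $v_\gamma$ with the test function $f=K(\cdot,y)\in H_K$ (which lies in $H_K$ by \eqref{repr1}):
\begin{equation*}
v_\gamma(y) = (v_\gamma,K(\cdot,y))_K = \overline{(K(\cdot,y),v_\gamma)_K} = (K(\cdot,y),v_\gamma)_K = \gamma(K(\cdot,y)),
\end{equation*}
where the conjugation is harmless since we work over $\RR$, the first equality is the reproducing property \eqref{repr2} applied to $v_\gamma\in H_K$, and the last is the definition of $v_\gamma$ as Riesz representer of $\gamma$. The right-hand side is precisely $\gamma$ applied to the first slot of $K$ evaluated at the point $y$ — but one must be slightly careful here: $K(\cdot,y)$ as argument of $\gamma$ means $\gamma$ acts on the function $x\mapsto K(x,y)$, so $\gamma(K(\cdot,y))$ as a function of $y$ is by definition $\gamma_1 K$. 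Hence $v_\gamma=\gamma_1 K$ as functions on $\Omega$, and in particular $\gamma_1 K\in H_K$.

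Finally, to get $\gamma_1 K=\gamma_2 K$, I would use the symmetry $K(x,y)=K(y,x)$: the function $x\mapsto K(x,y)$ equals the function $x\mapsto K(y,x)$, so applying $\gamma$ to the first argument of $K$ is the same as applying it to the second argument with the roles of the variables swapped, giving $(\gamma_1 K)(y)=(\gamma_2 K)(y)$ for all $y$. I expect no genuine obstacle here; the only subtlety worth spelling out carefully is the bookkeeping of which variable $\gamma$ acts on versus which variable remains free, i.e.\ making the notation $\gamma_1 K$, $\gamma_2 K$ unambiguous, and noting that \eqref{repr1} is exactly what guarantees $K(\cdot,y)$ is a legitimate argument for $\gamma\in H_K^*$. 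This is essentially the proof in \cite[Theorem 16.7]{Wendland}, so I would keep it short.
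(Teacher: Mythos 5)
Your proof is correct and is the standard argument. The paper itself does not re-prove this lemma but simply cites \cite[Theorem~16.7]{Wendland}, and your computation—Riesz representation, then the reproducing property with test function $K(\cdot,y)$, then symmetry of $K$ to identify $\gamma_1 K$ with $\gamma_2 K$—is exactly the argument given there. Nothing to add.
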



Assume that $K$ is positive definite. The \emph{kernel sums} of the form
\begin{equation}\label{kint}
\sigma=\sum_{j=1}^n c_j K(\cdot,x_j),\qquad c_j\in\RR,\quad x_j\in\Omega,
\end{equation}
are dense in $H_K(\Omega)$, and their kernel norm is given explicitely by 
\begin{equation}\label{snorm}
\|\sigma\|_{K}^2=\sum_{i,j=1}^nc_i c_j K(x_i,x_j).
\end{equation}

Given $X=\{x_1,\ldots,x_n\}\subset\Omega$, and data $a_1,\ldots,a_n\in\RR$, the kernel sum \eqref{kint}
is uniquely determined by the interpolation conditions $\sigma(x_i)=a_i$, $i=1,\ldots,n$. We will call it 
the \emph{kernel interpolant} of this data. In the case when $a_i=v(x_i)$, $i=1,\ldots,n$, for some
$v\in H_K$, we say that $\sigma$ is the kernel interpolant  of $v$. It satisfies 
\begin{equation}\label{orth}
(v-\sigma,\sigma)_{K}=0
\end{equation}
and hence
\begin{equation}\label{minnorm}
\|v-\sigma\|_{K}^2+\|\sigma\|_{K}^2=\|v\|_{K}^2.
\end{equation}

The restriction $K|_G$ on a subset $G\subset\Omega$ is obviously also a symmetric positive semi-definite kernel. 
Thanks to \eqref{snorm}, any kernel sum \eqref{kint} satisfies
\begin{equation}\label{srestr}
\|\sigma\|_{K,G}=\|\sigma\|_{K}\quad\text{whenever}\quad X\subset G.
\end{equation}
The native space of $K|_G$ can be described as follows.

\begin{lemma}[\cite{Aronszajn50}]\label{restrK}
If $K$ is a reproducing kernel on $\Omega$ and $G\subset\Omega$, then $K|_G$ is a reproducing kernel on $G$
with native space $H_K(G):=\{u|_G:u\in H_K(\Omega)\}$, and 
$$
\|u\|_{K,G}=\min\{\|\tilde u\|_{K,\Omega}:\tilde u\in H_K(\Omega),\,\tilde u|_G=u\}.$$
\end{lemma}

If $\Omega$ is an open set in $\RR^d$, then by Sobolev embedding $H^s(\Omega)$ is a reproducing kernel Hilbert space 
as long as $s>d/2$. Every (equivalent) norm for $H^s(\Omega)$ corresponds to a positive definite kernel on $\Omega$.

For $G\subset \Omega\subset\RR^d$,  the space $H^s(G)$ may be larger than the restriction of $H^s(\Omega)$
to $G$. Indeed, the existence of an extension of every $u\in H^s(G)$ to $\Omega$ with a finite $H^s$-norm is known to
be possible only under additional assumptions on the open set $G$. Even if this extension is available
for all $u\in H^s(G)$, we may need to know how much the $H^s$-norm may be increased as a result, and hence how much
the kernel norm $\|u\|_{K,G}$ may differ from the Sobolev norm $\|u\|_{H^s(G)}$. To quantify this,
for any pair of open sets $G\subset \Omega\subset\RR^d$, we denote by $\cE_s(G,\Omega)$  the 
\emph{extension constant}
$$
\cE_s(G,\Omega)
=\sup_{u\in H^s(G)\setminus\{0\}}\inf\{\|\tilde u\|_{H^s(\Omega)}/\|u\|_{H^s(G)}:
\tilde u\in H^s(\Omega),\, \tilde u|_{G}=u\}.$$
In the case $\Omega=\RR^d$ we write $\cE_s(G):=\cE_s(G,\RR^d)$.
The extension constant $\cE_s(G)$ is finite for Lipschitz domains due to well known extension theorems.


%
%
%
%

The next statement follows immediately from Lemma~\ref{restrK}.

\begin{lemma}\label{SobK} Let $\Omega,G$ be two open sets in $\RR^d$ with $G\subset\Omega$, and let $K$ be a 
reproducing kernel for $H^s(\Omega)$, where $s>d/2$. 
Then $H_K(G)\subset H^s(G)$ and
$$
\|u\|_{H^s(G)}\le C_1\|u\|_{K,G},\qquad u \in H_K(G).$$
Moreover, if the extension constant $\cE_s(G,\Omega)$ is finite, then 
$H_K(G)= H^s(G)$ and
$$
\|u\|_{K,G}\le C_2\cE_s(G,\Omega)\|u\|_{H^s(G)},\qquad u \in H_K(G).$$
The constants $C_1,C_2$ depend only on the constants of equivalence between 
$\|\cdot\|_{K,\Omega}$ and $\|\cdot\|_{H^s(\Omega)}$.
\end{lemma}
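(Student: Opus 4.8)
The plan is to deduce Lemma~\ref{SobK} directly from Lemma~\ref{restrK} together with the fact that $\|\cdot\|_{K,\Omega}$ and $\|\cdot\|_{H^s(\Omega)}$ are equivalent norms on $H_K(\Omega)=H^s(\Omega)$, the equivalence constants being exactly what $C_1,C_2$ are allowed to depend on. Write $a\|v\|_{H^s(\Omega)}\le\|v\|_{K,\Omega}\le b\|v\|_{H^s(\Omega)}$ for all $v\in H^s(\Omega)$, with $a,b>0$.

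For the first inclusion, take $u\in H_K(G)$. By Lemma~\ref{restrK} there is $\tilde u\in H_K(\Omega)=H^s(\Omega)$ with $\tilde u|_G=u$ and $\|u\|_{K,G}=\|\tilde u\|_{K,\Omega}$. Then $u=\tilde u|_G\in H^s(G)$ by the trivial restriction bound $\|u\|_{H^s(G)}\le\|\tilde u\|_{H^s(\Omega)}$ (which holds because the $H^s$-norm of a restriction to a smaller open set is dominated by the norm over the larger one; for integer $s$ this is immediate from the definition \eqref{Sobint}, and for general $s>d/2$ one uses that the restriction operator $H^s(\Omega)\to H^s(G)$ is bounded with norm $\le 1$ in the convention in use, or absorbs the bounding constant into $C_1$). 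Combining with the norm equivalence,
$$
\|u\|_{H^s(G)}\le\|\tilde u\|_{H^s(\Omega)}\le a^{-1}\|\tilde u\|_{K,\Omega}=a^{-1}\|u\|_{K,G},
$$
so $C_1=a^{-1}$ works.

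For the reverse, assume $\cE_s(G,\Omega)<\infty$ and let $u\in H^s(G)$. Pick an extension $\hat u\in H^s(\Omega)$ with $\hat u|_G=u$ and $\|\hat u\|_{H^s(\Omega)}\le 2\,\cE_s(G,\Omega)\|u\|_{H^s(G)}$ (the factor $2$ accounts for the supremum/infimum in the definition of $\cE_s$; any fixed factor is harmless). Then $\hat u\in H^s(\Omega)=H_K(\Omega)$, so $u=\hat u|_G\in H_K(G)$, giving $H^s(G)\subset H_K(G)$ and hence equality with the first part. Moreover, by Lemma~\ref{restrK} and the norm equivalence,
$$
\|u\|_{K,G}\le\|\hat u\|_{K,\Omega}\le b\,\|\hat u\|_{H^s(\Omega)}\le 2b\,\cE_s(G,\Omega)\,\|u\|_{H^s(G)},
$$
so $C_2=2b$ works, and indeed $C_1,C_2$ depend only on $a,b$, i.e.\ on the equivalence constants between $\|\cdot\|_{K,\Omega}$ and $\|\cdot\|_{H^s(\Omega)}$.

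I do not expect a genuine obstacle here; the lemma is essentially a packaging of Lemma~\ref{restrK}. The only point requiring a little care is the trivial restriction inequality $\|\tilde u|_G\|_{H^s(G)}\le\|\tilde u\|_{H^s(\Omega)}$ when $s$ is non-integer, since $H^s$ on an open set is then defined by restriction from $H^s(\RR^d)$; one checks that restricting further to $G\subset\Omega$ can only decrease the infimum defining the restriction norm, so the inequality holds with constant $1$ (or is absorbed into $C_1$). Everything else is a one-line application of the cited results.
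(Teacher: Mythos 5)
Your proof is correct and takes exactly the route the paper intends: the paper dispatches Lemma~\ref{SobK} with the single remark that it ``follows immediately from Lemma~\ref{restrK}'', and your argument is precisely that deduction, combining the minimum-norm description of $\|\cdot\|_{K,G}$ from Lemma~\ref{restrK} with the norm equivalence on $\Omega$ and (for the converse) the extension constant. Your side remark about the trivial restriction bound $\|\tilde u|_G\|_{H^s(G)}\le\|\tilde u\|_{H^s(\Omega)}$ is the only point needing any care and you handle it correctly for both the integer-$s$ definition \eqref{Sobint} and the restriction definition.
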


%

Any reproducing kernel for $H^s(\Omega)$ satisfies $K(x,\cdot)\in H^{s}(\Omega)$ for all $x\in \Omega$ 
because of \eqref{repr1}. However, with the help of Lemma~\ref{gammaK} we see that also derivatives 
$\partial^\beta_1 K(x,\cdot)$, with $|\beta|<s-d/2$, belong to $H^{s}(\Omega)$.

\begin{lemma}\label{smker}
Let $K:\Omega\times\Omega\to\RR$ be a reproducing kernel for $H^s(\Omega)$, $s>d/2$. Then
$\partial^\beta_1 K(x,\cdot)\in H^{s}(\Omega)$ for all $x\in\Omega$ and all $\beta\in\ZZ_+^d$ 
satisfying $|\beta|<s-d/2$.
\end{lemma}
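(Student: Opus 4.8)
The plan is to show that $\partial^\beta_1 K(x,\cdot)$ is the Riesz representer of a bounded linear functional on $H^s(\Omega)$, and then invoke Lemma~\ref{gammaK}. Fix $x\in\Omega$ and $\beta\in\ZZ_+^d$ with $|\beta|<s-d/2$. The natural candidate functional is $\gamma_{x,\beta}(f):=\partial^\beta f(x)$, the evaluation of the $\beta$-th partial derivative at $x$. The first step is to verify that this functional is well-defined and bounded on $H^s(\Omega)$: since $f\in H^s(\Omega)$ implies $\partial^\beta f\in H^{s-|\beta|}(\Omega)$ with $\|\partial^\beta f\|_{H^{s-|\beta|}(\Omega)}\le\|f\|_{H^s(\Omega)}$, and since $s-|\beta|>d/2$ by hypothesis, the Sobolev embedding $H^{s-|\beta|}(\Omega)\hookrightarrow C(\Omega)$ (continuous, hence the point evaluation $\delta_x$ is bounded there) gives $|\partial^\beta f(x)|\le C\,\|f\|_{H^s(\Omega)}$. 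Thus $\gamma_{x,\beta}\in H^s(\Omega)^*=H_K^*$.

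Next I would identify the Riesz representer of $\gamma_{x,\beta}$ with $\partial^\beta_1 K(x,\cdot)$. By Lemma~\ref{gammaK}, the Riesz representer is the function $g:=(\gamma_{x,\beta})_1 K=(\gamma_{x,\beta})_2 K$, which belongs to $H_K=H^s(\Omega)$. Applying $\gamma_{x,\beta}$ to the \emph{first} argument of $K$ means differentiating $y\mapsto K(y,\cdot)$ with the multi-index $\beta$ and evaluating at $x$, i.e.\ $g(\cdot)=\partial^\beta_1 K(x,\cdot)$. Therefore $\partial^\beta_1 K(x,\cdot)=g\in H^s(\Omega)$, which is exactly the claim. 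One should check that the symmetry of $K$ together with the identification $(\gamma)_1K=(\gamma)_2K$ makes this consistent regardless of which slot one differentiates.

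The one point requiring a little care — and the main obstacle — is justifying that differentiation in the kernel argument commutes with the Riesz-representation construction, i.e.\ that $(\gamma_{x,\beta})_1K$ really is the classical partial derivative $\partial^\beta_1 K(x,\cdot)$ rather than merely some abstract representer. This is where Lemma~\ref{gammaK} does the work: it asserts precisely that the representer of any bounded functional $\gamma$ is obtained by applying $\gamma$ to one argument of $K$, and applying the functional $f\mapsto\partial^\beta f(x)$ to the argument $y$ of $K(y,z)$ is by definition $\partial^\beta_y K(y,z)\big|_{y=x}$. Since $K(\cdot,z)=K(z,\cdot)\in H^s(\Omega)$ for each $z$ (by the reproducing property \eqref{repr1} and symmetry) and $\gamma_{x,\beta}$ is bounded on $H^s(\Omega)$, the expression $\partial^\beta_1 K(x,z)$ is meaningful pointwise in $z$, and Lemma~\ref{gammaK} guarantees it coincides with the representer, which lies in $H^s(\Omega)$. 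This completes the argument. $\qed$
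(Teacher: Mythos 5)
Your proof takes essentially the same route as the paper: show that the functional $\gamma_{x,\beta}(f)=\partial^\beta f(x)$ is bounded on $H^s(\Omega)$ and then invoke Lemma~\ref{gammaK} to conclude that its Riesz representer $\partial^\beta_1K(x,\cdot)$ lies in $H_K=H^s(\Omega)$. The argument is correct and the identification of the representer is handled carefully.

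One small technical imprecision is worth flagging. You invoke the global Sobolev embedding $H^{s-|\beta|}(\Omega)\hookrightarrow C(\Omega)$, but this embedding can fail for an arbitrary open set $\Omega$ without boundary regularity (the Sobolev embedding theorem requires e.g.\ the cone condition or a Lipschitz boundary). The paper is careful here: it applies the embedding on a ball $B\subset\Omega$ centered at $x$, and then uses that restriction to $B$ is bounded in $H^s$, i.e.\ $|\partial^\beta f(x)|\le C_B\|f\|_{H^s(B)}\le C_B\|f\|_{H^s(\Omega)}$. The constant is then allowed to depend on $x$ through the radius of $B$, which is fine since boundedness of $\gamma_{x,\beta}$ is only needed for each fixed $x$. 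This easily repairs the step and makes your argument agree with the paper's.
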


\begin{proof} By Sobolev embedding (applied to a ball centered at $x$ and contained in $\Omega$) 
the linear functionals $\gamma$ of the form 
$\gamma v =\partial^\beta v(x)$, $v\in H^s(\Omega)$, are bounded on $H^s(\Omega)=H_K$ for all $x\in\Omega$ and 
$\beta\in\ZZ^d_+$ with $|\beta|<s-d/2$. Hence the statement follows by Lemma~\ref{gammaK}.
\end{proof}

If $\cM$ is a smooth closed manifold of dimension $d$, then $H^s(\cM)$ is a reproducing kernel Hilbert space when $s>d/2$.
It may however be difficult to compute the reproducing kernel such that its native space coincides with a given norm of 
$H^s(\cM)$. 
If an orthonormal basis is known for a Sobolev space, for example for $L^2(\cM)=H^0(\cM)$, then reproducing kernels can be constructed via
infinite series, see e.g.~\cite{LevesleyRagozin07,Narcowich95} and \cite[Section 17.4]{Wendland}. 
We discuss this in somewhat more detail in Section~\ref{difop}.

If $\cM$ is embedded into $\RR^m$, $m>d$, then 
in view of the trace theorems (see e.g.\ \cite[Theorem 2.3.7]{Agranovich15}) reproducing kernels
$K$ for $H^s(\cM)$, $s>d/2$, can be obtained by restricting to $\cM$ one of explicitely known positive definite kernels defined on $\RR^m$ with 
the native space norm equivalent to $H^{s+(m-d)/2}(\RR^m)$. 
The best known examples are Mat\'ern and Wendland kernels. 

The  Mat\'ern kernel is given by
\begin{equation}\label{Matern}
M_{s,d}(x,y)=\Phi_{s,d}(x-y),\quad
\Phi_{s,d}(x):=\tfrac{1}{2^{s-1}\Gamma(s)}\mathcal{K}_{s-d/2}(\|x\|_2)\|x\|_2^{s-d/2},\quad x\in\RR^m,
\end{equation}
where $\mathcal{K}_{\nu}$ denotes the modified Bessel function of second kind. 
The native space norm $\|\cdot\|_{\Phi_{s,d}}$ coincides with the Bessel potential 
norm of $H^{s+(m-d)/2}(\RR^m)$ since the $m$-dimensional Fourier transform 
of $\Phi_{s,d}$ is $(1+\|\omega\|_2^2)^{-s-(m-d)/2}$. By scaling $M_{s,d}$ with a \emph{shape parameter} $\eps>0$, we 
obtain the kernel $M_{s,d}^\eps(x,y)=M_{s,d}(\eps x,\eps y)$ whose native space also coincides with $H^{s+(m-d)/2}(\RR^m)$,
with a different but equivalent norm.

The native space of  the Wendland compactly supported kernel 
$W_{m,\ell}(x,y)=\phi_{m,\ell}(\|x-y\|_2)$,  $\ell\in\ZZ_+$, see \cite{Wendland}, 
 is norm equivalent to  $H^{s+(m-d)/2}(\RR^m)$
for $s=(d+1)/2+\ell$  (where $m\ge 3$ if $\ell=0$). Hence, in the case when $s-d/2\in \ZZ_++\frac12$, the restriction of
$W_{m,r}$, $m>d$, $r=\lfloor s-d/2 \rfloor=\lceil s-d/2\rceil-1$, is a reproducing kernel for $H^s(\cM)$.


\subsection{Differential operators}\label{difop}

Assuming that $\cM$ is a smooth compact manifold of dimension $d$ without boundary, we say that
a differential operator $L$ on $\cM$ has \emph{smooth coefficients} if its 
coefficients $a_{\ell,\alpha}$ in local coordinates,
$$
Lu\circ\hat\varphi_\ell^{-1} 
=\sum_{|\alpha|\le m}a_{\ell,\alpha}\partial^\alpha (u\circ\hat\varphi_\ell^{-1}),
\qquad u:\hat U_\ell\to\RR,\quad a_{\ell,\alpha}:\hat\varphi_\ell(\hat U_\ell)\to\RR,$$
are infinitely differentiable and uniformly bounded,
such that for some constants $C_{\alpha\beta}$,
\begin{equation}\label{aalpha}
\|\partial^\beta a_{\ell,\alpha}\|_{L^\infty(\hat\varphi_\ell(\hat U_\ell))}\le C_{\alpha\beta},\qquad 
\alpha,\beta\in\ZZ_+^d,\quad |\alpha|\le m,\quad \ell=1,\ldots,\hat n.
\end{equation}

Consider an  elliptic operator $L$ with smooth coefficients and  even order
$m=2\kappa$. In fact, all elliptic operators have even order if $d>2$ \cite{Agranovich15}. For all $t\in\RR$, 
 $L$ is a bounded linear operator from $H^t(\cM)$ to $H^{t-2\kappa}(\cM)$. 
Moreover, its inverse is a bounded linear operator from $H^{t-2\kappa}(\cM)$ to $H^t(\cM)$ as soon as the null space 
$N(L)=\{u: Lu=0\}$
of $L$ is trivial \cite[Theorem 6.2.1]{Agranovich15}, which implies \eqref{reg}. 

Since $H^{t_2}(\cM)$ is compactly embedded in
$H^{t_1}(\cM)$ for $t_1< t_2$ \cite[Theorem 2.3.1]{Agranovich15}, the operator $L^{-1}:H^t(\cM)\to H^t(\cM)$ is compact for
any $t\in R$. Hence the spectrum of $L$ consists of isolated eigenvalues of finite multiplicity with no finite
accumulation points. Assume that $L$, as an unbounded operator on $H^s(\cM)$ with domain
$H^{s+2\kappa}(\cM)$, is positive and self-adjoint with respect to an inner product $(\cdot,\cdot)_s$
in $H^s(\cM)$, that is, $(Lv,v)_s>0$ for all $v\in H^{s+2\kappa}(\cM)\setminus\{0\}$ and $(Lv,w)_s=(v,Lw)_s$ 
for all $v,w\in H^{s+2\kappa}(\cM)$. Then it follows from the spectral theorem 
(see e.g.~\cite[Theorem 7.17]{Weidmann80}, where real valued Hilbert spaces are considered)
that there exists an orthonormal basis of $H^s(\cM)$ consisting of eigenfunctions $e_j$, $j\in\NN$, of $L$, such that 
$$
Lv=\sum_{j=1}^\infty \lambda_jc_j e_j,\quad L^{-1}v=\sum_{j=1}^\infty \lambda^{-1}_jc_j e_j
\quad\text{for any}\quad v=\sum_{j=1}^\infty c_j e_j\in H^s(\cM),$$
where $\lambda_j>0$ are the eigenvalues of $L$ and $c_j\in\RR$. This implies for all $k\in\NN$,
$$
H^{s-km}(\cM)=\big\{\sum_{j=1}^\infty\lambda^k_jc_j e_j: \|c\|_2<\infty \big\},\quad
H^{s+km}(\cM)=\big\{\sum_{j=1}^\infty\lambda^{-k}_jc_j e_j: \|c\|_2<\infty \big\},$$
where $\|c\|_2^2:=\sum_{j=1}^\infty c^2_j$, and by interpolation (see e.g.~\cite[Section 13]{Agranovich15}),
$$
H^t(\cM)=\big\{\sum_{j=1}^\infty\lambda^{(s-t)/m}_jc_j e_j: \|c\|_2<\infty \big\},\qquad t\in\RR.$$
The norm in $H^t(\cM)$ defined by  $\|\sum_{j=1}^\infty\lambda^{(s-t)/m}_jc_j e_j\|_t:=\|c\|_2$ is equivalent to the standard
Sobolev norms, and $\lambda^{(s-t)/m}_j e_j$, $j=1,2,\ldots$, is an orthonormal basis with respect to this norm.
It is easy to see that $L$ is positive and self-adjoint with respect to the corresponding inner products $(\cdot,\cdot)_t$,
$t\in\RR$. Note that eigenfunctions $e_j$ belong to  
$H^t(\cM)$ for all $t\in\RR$, and thus to $C^\infty(\cM)$ 
since the eigenspaces of $L$ are independent of the choice of $s$ and inner product in $H^s(\cM)$.
The eigenvalues $\lambda_j$, numbered in nondecreasing order with multiplicities taken into account satisfy 
$\lambda_j=j^{2\kappa/d}(C+o(1))$, $j\to\infty$, where $C$ is a positive constant independent of $j$,
see \cite[Theorem 6.1.1]{Agranovich94}.

If $s>d/2$, then $H^s(\cM)$ is embedded into $C(\cM)$, and hence $H^s(\cM)$ is a reproducing kernel Hilbert space. 
It is easy to see that its kernel
corresponding to the inner product $(\cdot,\cdot)_s$ is given by
\begin{equation}\label{KfL}
K(x,y)=\sum_{j=1}^\infty e_j(x)e_j(y),
\end{equation}
where the series is a well defined real function on $\cM\times\cM$ since for each fixed $x\in\cM$ the sequence 
$e_j(x)=(v_x,e_j)$, $j=1,2,\ldots$, where $v_x\in H^s(\cM)$ is the Riesz representer of the point evaluation functional at $x$, is 
square summable. If $t>d/2$, then the kernel of $H^t(\cM)$ for the norm $\|\cdot\|_t$ is given by
\begin{equation}\label{Lt}
L^{2(s-t)/m}_1K(x,y)=\sum_{j=1}^\infty \lambda^{2(s-t)/m}_je_j(x)e_j(y),
\end{equation}
where the powers of $L$ are defined by
$$
L^\theta v=\sum_{j=1}^\infty \lambda^\theta_jc_j e_j,\qquad v=\sum_{j=1}^\infty c_j e_j,\qquad \theta\in\RR,$$
such that
$$
\|L^\theta v\|_t=\|v\|_{t+m\theta},\qquad t,\theta\in\RR,\quad v\in H^{t+m\theta}(\cM).$$
Since $\lambda_j>0$ for all $j$, the kernels $K_{L^\theta}=L^\theta_1K$ are positive definite whenever $s-\theta\kappa > d/2$.
%
%

We summarize in the following lemma the main findings of this section.

\begin{lemma}\label{KL}
Let $L$ be an elliptic differential operator  of order $2\kappa$ with smooth coefficients  and  trivial null space,
and let $K$ be a reproducing kernel for $H^s(\cM)$, where $s>d/2$. 
Assume that $L$ is positive and self-adjoint with respect to the inner product of $H^s(\cM)$ defined by $K$. 
Then $K_{L^\theta}=L^\theta_1K$ is a positive definite reproducing kernel for $H^{s-\theta\kappa}(\cM)$ for any $\theta\in\RR$ such that 
$s-\theta\kappa > d/2$. In particular, $K_{L^{-1}}$ is a reproducing kernel for $H^{s+\kappa}(\cM)$.
If $s>\kappa + d/2$, then  $K_L$ is a reproducing kernel for $H^{s-\kappa}(\cM)$ and
\begin{equation}\label{tK}
\|Lv\|_{K_L}=\|v\|_{K_{L^{-1}}},\qquad v\in H^{s+\kappa}(\cM).
\end{equation}
\end{lemma}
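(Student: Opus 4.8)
The plan is to obtain every assertion of the lemma directly from the spectral decomposition of $L$ and the kernel formulas \eqref{KfL}, \eqref{Lt} established earlier in this section. Write $m=2\kappa$. Given $\theta\in\RR$ with $s-\theta\kappa>d/2$, set $t:=s-\theta\kappa$, so that $\theta=2(s-t)/m$. Then \eqref{Lt} says that $K_{L^\theta}=L^\theta_1K$ is precisely the reproducing kernel of $H^t(\cM)$ equipped with the inner product generating the norm $\|\cdot\|_t$. Since $\|\cdot\|_t$ is equivalent to the standard Sobolev norm, and since a symmetric positive semi-definite kernel determines its native space, inner product included, uniquely (the Moore--Aronszajn uniqueness recalled after \eqref{repr2}, cf.\ \cite{Aronszajn50}), this gives that $K_{L^\theta}$ is a reproducing kernel for $H^{s-\theta\kappa}(\cM)$ with $\|\cdot\|_{K_{L^\theta}}=\|\cdot\|_t$, $t=s-\theta\kappa$. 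Taking $\theta=-1$ yields $t=s+\kappa$ and the named case $K_{L^{-1}}$; taking $\theta=1$, which forces the extra requirement $s-\kappa>d/2$, yields $t=s-\kappa$ and the claim about $K_L$.

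For positive definiteness (already noted in the running text before the lemma) I would argue that a reproducing kernel is automatically positive semi-definite, and that $K_{L^\theta}$ is positive definite because the point evaluations at finitely many distinct points are linearly independent on its native space: by \eqref{snorm}, $\sum_{i,j}c_ic_jK_{L^\theta}(x_i,x_j)=0$ forces $\sum_i c_iK_{L^\theta}(\cdot,x_i)$ to vanish, hence $\sum_i c_i f(x_i)=0$ for every $f$ in the native space, and since $H^{s-\theta\kappa}(\cM)\supset C^\infty(\cM)$ separates points this gives $c_i=0$ for distinct $x_i$. Alternatively one may invoke the explicit series $K_{L^\theta}(x,y)=\sum_j\lambda_j^\theta e_j(x)e_j(y)$ with all $\lambda_j>0$, as displayed in \eqref{Lt}.

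It remains to prove the norm identity \eqref{tK}, for which I would combine the identifications $\|\cdot\|_{K_L}=\|\cdot\|_{s-\kappa}$ and $\|\cdot\|_{K_{L^{-1}}}=\|\cdot\|_{s+\kappa}$ just obtained with the scaling property $\|L^\theta v\|_t=\|v\|_{t+m\theta}$ recorded above: with $\theta=1$ and $t=s-\kappa$ this yields, for $v\in H^{s+\kappa}(\cM)$,
\[
\|Lv\|_{K_L}=\|Lv\|_{s-\kappa}=\|v\|_{s-\kappa+2\kappa}=\|v\|_{s+\kappa}=\|v\|_{K_{L^{-1}}},
\]
the standing hypothesis $s>\kappa+d/2$ ensuring that both $K_L$ and $K_{L^{-1}}$ are genuine positive definite reproducing kernels. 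I do not anticipate a real obstacle: the entire content is the bookkeeping that matches the power $\theta$ to the Sobolev index $s-\theta\kappa$ through $m=2\kappa$, together with the observation that equivalence of $\|\cdot\|_t$ to the standard norm transfers the reproducing-kernel property. If one prefers not to cite \eqref{Lt}, the only genuinely computational step is to expand $L^\theta_1K(x,y)=\sum_j (L^\theta e_j)(x)\,e_j(y)=\sum_j\lambda_j^\theta e_j(x)e_j(y)$ using \eqref{KfL} and to check the reproducing identity against the orthonormal basis $\{\lambda_j^{(s-t)/m}e_j\}$ of $(H^t(\cM),\|\cdot\|_t)$.
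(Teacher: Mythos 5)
Your proof is correct and follows the same spectral-decomposition route as the paper: Lemma~\ref{KL} is stated there as a summary of the discussion surrounding \eqref{KfL}--\eqref{Lt}, and you simply retrace that discussion by matching $\theta$ to the Sobolev index via $t=s-\theta\kappa$ and applying the scaling identity $\|L^\theta v\|_t=\|v\|_{t+m\theta}$. The only extra detail you add --- the linear-independence-of-point-evaluations argument for positive definiteness --- is a reasonable elaboration of the paper's one-line remark that $\lambda_j>0$ for all $j$ gives positive definiteness of $K_{L^\theta}$.
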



Note that Lemma~\ref{KL} also holds for certain classes of pseudodifferential operators on $\cM$, 
see \cite{Agranovich94,Agranovich15}.

\begin{lemma}\label{smkerL}
Under the hypotheses of Lemma~\ref{KL}, 
$K(\cdot,x)\in H^{s+2\kappa r}(\Omega)$ for all $x\in\Omega$, where $r$ is any integer satisfying $2\kappa r<s-d/2$. 
In particular, $K(\cdot,x)\in H^{s+2\kappa}(\Omega)$ for all $x\in\Omega$ if $s>2\kappa + d/2$.
\end{lemma}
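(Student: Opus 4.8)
The plan is not to estimate the diagonal series $K(x,x)=\sum_j e_j(x)^2$ directly, but to realize $K(\cdot,x)$ as the Riesz representer of a suitable bounded functional in the \emph{smoother} reproducing kernel Hilbert space $H^{s+2\kappa r}(\cM)$ furnished by Lemma~\ref{KL}. (The naive route --- expand $K(\cdot,x)=\sum_j e_j(x)e_j$, bound $|e_j(x)|\le C\|e_j\|_{H^\tau(\cM)}$ for some $\tau>d/2$ by Sobolev embedding, and sum using $\lambda_j=j^{2\kappa/d}(C+o(1))$ --- does work, but only under the stronger hypothesis $2\kappa r<s-d$, so it has to be avoided.) We may assume $r\ge1$: for $r=0$ the claim is \eqref{repr1}, and for $r<0$ it is trivial since then $H^{s+2\kappa r}(\cM)\supseteq H^s(\cM)\ni K(\cdot,x)$.

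First I would put $\tilde K:=K_{L^{-2r}}=L^{-2r}_1K$. Because $s+2\kappa r>d/2$, Lemma~\ref{KL} applied with $\theta=-2r$ shows that $\tilde K$ is a positive definite reproducing kernel for $H^{s+2\kappa r}(\cM)$, so that $H_{\tilde K}(\cM)=H^{s+2\kappa r}(\cM)$ with $\|\cdot\|_{\tilde K}=\|\cdot\|_{s+2\kappa r}$. Next I would introduce the linear functional $\gamma:=\delta_x\circ L^{2r}$ on $H^{s+2\kappa r}(\cM)$, that is $\gamma v:=(L^{2r}v)(x)$, and verify that it is bounded there: by the mapping properties of the powers of $L$ recalled in Section~\ref{difop}, $L^{2r}$ is a bounded operator $H^{s+2\kappa r}(\cM)\to H^{s-2\kappa r}(\cM)$ with $\|L^{2r}v\|_{s-2\kappa r}=\|v\|_{s+2\kappa r}$, and since $2\kappa r<s-d/2$, i.e.\ $s-2\kappa r>d/2$, the point evaluation $\delta_x$ is bounded on $H^{s-2\kappa r}(\cM)$ by Sobolev embedding. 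This is the only place where the hypothesis $2\kappa r<s-d/2$ is used.

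By Lemma~\ref{gammaK}, the Riesz representer $\gamma_1\tilde K$ of $\gamma$ belongs to $H_{\tilde K}(\cM)=H^{s+2\kappa r}(\cM)$. It then remains to identify this representer: for each $y\in\cM$,
$$
(\gamma_1\tilde K)(y)=\gamma\big(\tilde K(\cdot,y)\big)=\big(L^{2r}\big[L^{-2r}_1K(\cdot,y)\big]\big)(x)=\big(L^{2r}L^{-2r}\big[K(\cdot,y)\big]\big)(x)=K(x,y),
$$
where $L^{2r}L^{-2r}$ is the identity on $H^s(\cM)\ni K(\cdot,y)$ (immediate from the diagonalization of $L$ in the eigenbasis $\{e_j\}$), and the composition is legitimate because $\tilde K(\cdot,y)=L^{-2r}_1K(\cdot,y)\in H^{s+4\kappa r}(\cM)$, on which $L^{2r}$ acts to return a function of $H^s(\cM)$ to which $\delta_x$ applies. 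By symmetry of $K$ this says $\gamma_1\tilde K=K(\cdot,x)$, whence $K(\cdot,x)\in H^{s+2\kappa r}(\cM)$ for all $x$. The ``in particular'' assertion is the case $r=1$.

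The one thing requiring care --- and the only genuine obstacle --- is the bookkeeping in the last identity: one must track which argument of $K$ the operators $L^{\pm2r}$ act on, and check that they compose to the identity on the space $H^s(\cM)$ where $K(\cdot,y)$ lives, using only the mapping properties $L^\theta:H^t(\cM)\to H^{t-2\kappa\theta}(\cM)$ and the spectral formula $L^\theta\big(\sum_j c_je_j\big)=\sum_j\lambda_j^\theta c_je_j$ from Section~\ref{difop}. One could also avoid Lemma~\ref{gammaK} altogether: using $\|e_j\|_{s+2\kappa r}^2=\lambda_j^{2r}$ one checks directly that $\gamma v=(v,K(\cdot,x))_{s+2\kappa r}$ for every $v=\sum_j c_je_j\in H^{s+2\kappa r}(\cM)$, and then the Riesz representation theorem in $H^{s+2\kappa r}(\cM)$, together with the boundedness of $\gamma$ established above, yields $K(\cdot,x)\in H^{s+2\kappa r}(\cM)$ directly.
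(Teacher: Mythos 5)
Your proof is correct, and it takes a genuinely different route from the paper's. The paper works through the intermediate space $H^s(\cM)$: by Lemma~\ref{smker} (i.e.\ boundedness of $\delta_x\circ L^r$ on $H^s(\cM)$, since $2\kappa r<s-d/2$) it gets $L^r_1K(x,\cdot)\in H^s(\cM)$; it then invokes the symmetry of the kernel $L^r_1K$ --- a consequence of the self-adjointness of $L$ --- to rewrite this as $L^r[K(\cdot,x)]\in H^s(\cM)$; and finally it lifts one step via the elliptic regularity estimate \eqref{reg} applied to the operator $L^r$ of order $2\kappa r$. You instead argue directly in the target space $H^{s+2\kappa r}(\cM)$: you exhibit its reproducing kernel $K_{L^{-2r}}$ via Lemma~\ref{KL} with $\theta=-2r$, show that $\gamma=\delta_x\circ L^{2r}$ is bounded there (which is where $2\kappa r<s-d/2$ enters, through Sobolev embedding of $H^{s-2\kappa r}(\cM)$ into $C(\cM)$), and then identify the Riesz representer $\gamma_1K_{L^{-2r}}$ as $K(\cdot,x)$ using only the spectral identity $L^{2r}L^{-2r}=I$ on $H^s(\cM)$ and the symmetry of $K$ itself. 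What your route buys is that it avoids both the auxiliary symmetry argument for $L^r_1K$ and the separate appeal to elliptic regularity, doing all the work with Lemma~\ref{gammaK} in the target space; the cost is carrying the heavier power $L^{2r}$ and the smoother auxiliary kernel $K_{L^{-2r}}$. Your side remark that the naive eigenfunction-series estimate (via $|e_j(x)|\lesssim\|e_j\|_\tau$ and Weyl asymptotics) only yields the weaker threshold $2\kappa r<s-d$ is also accurate.
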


\begin{proof} By Lemma~\ref{smker}, $L^r_1K(x,\cdot)\in H^{s}(\Omega)$ for all $x\in\Omega$ as soon as $2\kappa r<s-d/2$. 
Since the kernel  $L^r_1K$ is symmetric, it follows that $L^r_1K(\cdot,x)\in H^{s}(\Omega)$ for all $x\in\Omega$, and the statement 
is obtained by applying \eqref{reg} to $u=K(\cdot,x)$ and the operator $L^r$ of order $2\kappa r$.
\end{proof}

In the case of the $d$-dimensional sphere $\cM=\SSS^d$ of particular interest is a class of elliptic pseudodifferential operators 
\cite{Freeden98} that can be represented 
 in the form
\begin{equation}\label{pseu}
Lv=\sum_{j=1}^\infty \lambda_j c_je_j,\quad v=\sum_{j=1}^\infty  c_je_j,\qquad \lambda_j>0,\quad j\in\NN,
\end{equation}
where $e_j$ are the spherical harmonics, that is the eigenfunctions of the  Laplace-Beltrami operator $\Delta_{\cM}$,
normalized with respect to the standard inner product of $L^2(\SSS^d)$.  For  $d=2$ these operators have important applications in physical geodesy. 

In particular, operators of the form 
\begin{equation}\label{LB}
L=(-\Delta_{\cM}+\alpha I)^\kappa, \qquad \kappa\in\NN,\quad \alpha>0,
\end{equation}
where $I$ is the identity operator, 
are elliptic differential operators of order $2\kappa$ on $\SSS^d$ with smooth coefficients and trivial null spaces.
These operators satisfy \eqref{pseu} and are positive and self-adjoint with respect to the inner products
 for Sobolev spaces $H^s(\SSS^d)$, $s>d/2$, generated by reproducing kernels of the type
\begin{equation}\label{KfLS}
K(x,y)=\sum_{j=1}^\infty \mu_j e_j(x)e_j(y),
\end{equation}
with appropriately decaying positive real sequences $\{\mu_j\}_{j=1}^\infty$. Explicit \emph{zonal} kernels 
$K(x,y)=\psi(x^Ty)$ with this property are
obtained as restrictions to $\SSS^d$ of radially symmetric kernels (radial basis functions) 
$K(x,y)=\phi(\|x-y\|_2)$, $x,y\in\RR^{d+1}$ since 
$\|x-y\|_2=\sqrt{2-2x^Ty}$ if $\|x\|_2=\|y\|_2=1$, see e.g.~\cite[Section 17.2]{Wendland}. In particular, the hypotheses of 
Lemma~\ref{KL} are satisfied for the differential operators \eqref{LB} and kernels for $H^s(\SSS^d)$ obtained by 
restricting to $\SSS^d$ the Mat\'ern and Wendland kernels for $\RR^{d+1}$, as well as their scaled versions.

If $\cM$ is a Cartesian product of multiple spheres, possibly of different dimensions, then appropriate kernels, with respect to which 
the corresponding tensor products of the operators
\eqref{LB} are positive and self-adjoint, are obtained for $H^s(\cM)$ by taking tensor products of the 
spherical restrictions of the Mat\'ern and Wendland kernels. This follows from the fact that 
tensor products of kernels may be identified with the kernels of the tensor products of the respective 
reproducing kernel Hilbert spaces, see e.g.~\cite{PaulsenRaghupathi16}. This applies in particular to the $d$-dimensional torus
$\cM=\TT^d$ \cite{Narcowich95}. 

Differential operators \eqref{LB} on arbitrary Riemannian manifolds also satisfy the assumptions of Lemma~\ref{KL}, see \cite{Agranovich15}.
However, closed form formulas for the kernels of the form \eqref{KfLS}, where $e_j$ are the eigenfunctions of $\Delta_\cM$, are not available in general.
By considering the flat torus $\cM=\RR^d/\ZZ^d$ we may find appropriate kernels for the periodic boundary value problems for the 
operators $L=(-\Delta+\alpha I)^\kappa$ on the unit cube, where $\Delta$ is the $d$-dimensional Laplace operator. In the univariate case $d=1$ this kernel is known in terms
of Bernoulli polynomials, see e.g.~\cite[Section 3.2]{FasshauerMcCourt15}.

%
%
%


\section{A meshless finite difference method}\label{method}

Assume that $L$ is an elliptic differential operator on $\cM$ of order $2\kappa$ with smooth coefficients  
and trivial null space, and  $K$ is a reproducing kernel for $H^s(\cM)$, where $s>\kappa+d/2$, such that 
$L$ is positive and self-adjoint with respect to the inner product $(\cdot,\cdot)_K$ of $H^s(\cM)$ defined by $K$. 

We consider the equation 
$Lu=f$, with continuous $f$, and look for a discrete solution $\hu\in\RR^n$  to approximate $u|_X$ 
on a given set of \emph{nodes} $X=\{x_1,\ldots,x_n\}\subset\cM$.

We choose a system  $\cX=\{X_\ell:\ell =1,\ldots,m\}$ of subsets 
$X_\ell =\{x_j:j\in J_\ell \}\ne\emptyset$, with $J_\ell \subset J:=\{1,\ldots,n\}$, such
that   $\bigcup_{\ell =1}^m X_\ell =X$. Let $n_\ell:=\# X_\ell$.
Consider the local differentiation matrices 
$\Wl:=W_{X_\ell}\in\RR^{n_\ell\times n_\ell}$ of \eqref{difm}, that is
\begin{equation}\label{Wl}
\Wl=K_L|_{X_\ell}(K|_{X_\ell})^{-1}
\qquad \ell=1,\ldots,m.
\end{equation}
The kernel $K_L$ is well defined and positive definite by Lemma~\ref{KL}, and hence the matrices
 $\Wl$ are non-singular.

We now set up an overdetermined linear system 
\begin{equation}\label{ods}
W\!Mv=Mf|_X
\end{equation}
with unknown vector $v\in\RR^N$, the block diagonal matrix
$$
W=\diag(W_1,\ldots,W_m):=
\matr{W_1 & \cdots & 0\\ 
\vdots & \ddots &\vdots\\
0 &\cdots & W_m}\in \RR^{N\times N},\quad N:=n_1+\cdots+n_m,$$
and the incidence matrix 
$$
M=\matr{M_1\\ \vdots \\ M_m}\in\RR^{N\times n},\quad 
M_\ell=[\delta_{ij}]_{i\in J_\ell,j\in J}\in\RR^{n_\ell\times n},\quad
\delta_{ij}=\begin{cases} 1 & \text{if }i=j,\\
0 & \text{otherwise.}
\end{cases}$$ 
The matrix $W\!M$ has the full rank $n$ since $W$ is non-singular and $\rank M=n$. 

We determine the discrete solution $\hat u\in\RR^n$ as the unique least squares solution of \eqref{ods}, such that
\begin{equation}\label{leastsq}
\|W\!M\hat u-Mf|_X\|_2=\min_{v\in\RR^n}\|W\!Mv-Mf|_X\|_2.
\end{equation}

Note that \eqref{ods} can be more explicitely written as
\begin{equation}\label{ods1}
W_\ell \,v|_{J_\ell} = f|_{X_\ell},\quad \ell=1,\ldots,m,
\end{equation}
and hence, in line with the main idea of meshless generalized finite differences,
 each linear equation in \eqref{ods} is derived from a numerical differentiation formula
$$
Lu(x_i)\approx \sum_{j\in J_\ell} w^{(\ell)}_{i,j}u(x_j),\quad i\in J_\ell,$$
where the weight vector $[w^{(\ell)}_{i,j}]_{j\in J_\ell}$ is the row of $W_\ell$ corresponding to the node $x_i$.
Since 
\begin{equation}\label{ods2}
\|W\!Mv-Mf|_X\|_2^2=\sum_{\ell=1}^m \|W_\ell \,v|_{J_\ell} - f|_{X_\ell}\|_2^2,
\end{equation}
we in fact minimize in \eqref{leastsq} the sum of squared residuals of the numerical differentiation on the subsets
$X_\ell$.

Even if this method is well defined for any arbitrarily chosen nodes $X\subset\cM$ and any system $\cX$ of their subsets
satisfying $\bigcup_{\ell =1}^m X_\ell =X$, we have in mind  $X_\ell$ being clusters of neighboring nodes, with a significant overlap
between neighboring clusters.
In Section~\ref{estimates} we present an error bound for $\|u|_X-\hu\|_\infty$ under certain assumptions on $X$ and $\cX$, that in
particular quantify the overlaps in terms of certain determining properties of the images of the non-empty intersections $X_\ell\cap X_p$
in the charts of an atlas for $\cM$. 

For a practical realization of the method we suggest to choose a set $Y=\{y_1,\ldots,y_m\}\subset \cM$, possibly
$Y\subset X$, and select each $X_\ell$ as the set of $n_\ell$ nearest nodes to $y_\ell$, either in the sense of a properly defined distance
on $\cM$, or by the distance in an ambient Euclidian space. The latter approach with $Y=X$ is used in numerical examples of 
Section~\ref{numerics} for $\cM=\SSS^d$, $d\in\{2,3\}$, with $\RR^{d+1}$ as ambient space.

\begin{remark}\label{pnorm}\rm 
Instead of \eqref{leastsq} we may in principle define 
$\hat u\in\RR^n$ by minimizing a different $p$-norm, 
\begin{equation}\label{pmin}
\|W\!M\hat u-Mf|_X\|_p=\min_{v\in\RR^n}\|W\!Mv-Mf|_X\|_p,\quad 1\le p\le\infty,
\end{equation}
see also Remark~\ref{pnorm_est} below.
The 2-norm is preferable from the point if view of computational efficiency. In particular, for large $n$ and $m$ and small
$n_\ell$ we may use existing efficient methods for sparse linear least squares, see  \cite{Bjorck96}.
Both cases $p=1$ and $\infty$ have the interesting property that the solution $\hu$ of \eqref{pmin} satisfies certain linear
systems with a matrix $A$ consisting of relatively few rows of $WM$, and vector $b$ of corresponding components of 
$Mf|_X$. Namely, $A$ is an $n\times n$-matrix with $A\hu=b$ when $p=1$ \cite{Davies67}, 
and an $(n+1)\times n$-matrix with all components of the residual $A\hu-b$ equal in absolute value
 when $p=\infty$ \cite[p.~36]{Cheney82}. In both cases this means that  $\hu$ may be obtained by
minimizing $\|Av-b\|_p$ as soon as $A$ has full rank. This size reduction of the system \eqref{ods} is however 
computationally demanding.

\end{remark}


\section{Auxiliary results}\label{aux}
Before formulating and proving in Section~\ref{estimates} the error bounds for the above method we provide 
several technical results needed in the proof.

\subsection{Admissible atlases}\label{adm}

Recall that we have chosen and fixed a finite smooth atlas 
$\hat\cA=\{(\hat U_\ell,\hat\varphi_\ell)\}_{\ell=1}^{\hat n}$ of $\cM$ and a smooth subordinate
partition of unity $\hat\Gamma=\{\hat\gamma_\ell\}_{\ell=1}^{\hat n}$, 
and consider the norms for the Sobolev spaces $H^s(\cM)$ in the form \eqref{Snorm} 
dependent on this choice.

It follows from the properties of $\hat\gamma_\ell$ that the sets $\hat U^{c}_\ell:=\{x\in\cM:\hat\gamma_\ell(x)>c\}\subset \hat U_\ell$, $\ell=1,\ldots,\hat n$, form an open cover 
of $\cM$ when $c=0$, and hence also for all sufficiently small $c>0$. 
We say that a finite smooth atlas $\cA=\{(U_\ell,\varphi_\ell)\}_{\ell=1}^{m}$ for $\cM$ is \emph{$c$-admissible}, where $c>0$, if for each $\ell=1,\ldots,m$ there exists
$k=k(\ell)$ such that $U_\ell\subset \hat U^{c}_k$ and $\varphi_\ell=\hat\varphi_k|_{U_\ell}$. An atlas $\cA$ is
\emph{admissible} if it is \emph{$c$-admissible} for some $c>0$.
The \emph{covering number}
of $\cA$ is defined as
\begin{equation}\label{muA}
\mu_\cA:=\max_{1\le \ell\le m}\#\{j: U_j\cap U_\ell\ne\emptyset\}.
\end{equation}

We show that the Sobolev norm of a function $u$ on $\cM$ may be characterized in terms of Sobolev norms of the compositions
$u\circ \varphi_\ell^{-1}$ for any $c$-admissible atlas, without multiplying $u$ by the partition of unity function 
$\hat\gamma_\ell$ as in \eqref{Snorm}.

\begin{lemma}\label{admis} Let $\cA=\{(U_\ell,\varphi_\ell)\}_{\ell=1}^{m}$ be a $c$-admissible atlas for $\cM$ for some $c>0$. Then for all integer $s\ge0$,
\begin{equation}\label{aSob} 
C_1\|u\|_{H^s(\cM)}\le \Big(\sum_{\ell=1}^{m}\|u\circ \varphi_\ell^{-1}\|_{H^s(\varphi_\ell(U_\ell))}^2\Big)^{1/2}
\le C_2\sqrt{\mu_\cA}\,\|u\|_{H^s(\cM)},\quad u\in H^s(\cM),
\end{equation}
where $C_1>0$ depends only on $s$,  the choice of the atlas $\hat\cA$ 
and the partition of unity $\hat\Gamma$, and 
$C_2$ depends in addition on $c$. 
\end{lemma}

\begin{proof} 
Let $L_k:=\{\ell: U_\ell\cap \hat U_k\ne\emptyset\}$. Then $\hat U_k\subset \bigcup_{\ell\in L_k}U_\ell$
and hence
$$
\|u\hat\gamma_k\circ \hat\varphi_k^{-1}\|_{H^s(\RR^d)}^2
=\|u\hat\gamma_k\circ \hat\varphi_k^{-1}\|_{H^s(\hat\varphi_k(\hat U_k))}^2
\le\sum_{\ell\in L_k}\|u\hat\gamma_k\circ \hat\varphi_k^{-1}\|_{H^s(\hat\varphi_k(U_\ell\cap \hat U_k))}^2.
$$
Since $\hat\varphi_k^{-1}=\varphi_\ell^{-1}\circ\hat\varphi_{k(\ell)}\circ\hat\varphi_k^{-1}$ on
$\hat\varphi_k(U_\ell\cap \hat U_k)$, we have by the Leibniz and the chain rules,
$$
\|u\hat\gamma_k\circ \hat\varphi_k^{-1}\|_{H^s(\hat\varphi_k(U_\ell\cap \hat U_k))}^2
\le C\|u\circ \varphi_\ell^{-1}\|_{H^s(\varphi_\ell(U_\ell\cap \hat U_k))}^2,$$
where $C$ depends only on the size of derivatives of order up to $s$ of the functions $\hat\gamma_k\circ \hat\varphi_k^{-1}$ and the transition maps 
$\hat\varphi_{j}\circ\hat\varphi_{k}^{-1}$ of $\hat\cA$, and the (nonzero) determinants of their Jacobi matrices.
Finally,
$$
\sum_{\ell=1}^m\sum_{\substack{k=1\\ \ell\in L_k}}^{\hat n}\|u\circ \varphi_\ell^{-1}\|_{H^s(\varphi_\ell(U_\ell\cap \hat U_k))}^2
\le \hat n\sum_{\ell=1}^m\|u\circ \varphi_\ell^{-1}\|_{H^s(\varphi_\ell(U_\ell))}^2,$$
and we deduce the first inequality in \eqref{aSob}.

On the other hand, for each $k=1,\ldots,\hat n$,
$$
\sum_{\substack{\ell=1\\ k(\ell)=k}}^{m}\|u\circ \varphi_\ell^{-1}\|_{H^s(\varphi_\ell(U_\ell))}^2
\le \mu_\cA \|u\circ \hat\varphi_k^{-1}\|_{H^s(\hat\varphi_k(\hat U^{c}_k))}^2.$$
By applying the Leibniz product rule to $\frac{1}{\hat\gamma_k}\cdot(u \hat\gamma_k)$ and estimating the $L^2$-norm of the products of (bounded) derivatives of 
$\frac{1}{\hat\gamma_k\circ \hat\varphi_k^{-1}}$ with derivatives of $u\circ\hat\varphi_k^{-1}$, we obtain
\begin{equation}\label{loc1}
\|u\circ \hat\varphi_k^{-1}\|_{H^s(\hat\varphi_k(\hat U^{c}_k))}\le C \|u\hat\gamma_k\circ \hat\varphi_k^{-1}\|_{H^s(\RR^d)},
\end{equation}
where $C$ depends only on $\hat\cA$, $\hat\Gamma$, $c$ and $s$.
This completes the proof of the second inequality in \eqref{aSob}.
\end{proof}

Note that \eqref{aSob} holds without any assumptions about the boundaries of the sets $\varphi_\ell(U_\ell)$. In particular, they do not need to be Lipschitz domains.
If they are, then we can extend the functions $u|_{U_\ell}$ to $\cM$, which will be useful 
for establishing double estimates for $\|u\|_{H^s(\cM)}$ in terms of the native space norms of a reproducing kernel 
in charts, see \eqref{admisKU} below.

\begin{lemma}\label{ext} Let $U\subset \cM$ be a domain such that $U\subset \hat U^c_k$ for  
some $c>0$ and $k$, and the extension constant $\cE_s(\hat\varphi_k(U))$ is finite
for an integer $s\ge0$. 
Then for any $u:U\to\RR$ such that $u\circ \hat\varphi_k^{-1}\in H^s(\hat\varphi_k(U))$ 
there exists a function $\tilde u\in H^s(\cM)$ such that $\tilde u|_{U}=u$ and
$$
\|\tilde u\|_{H^s(\cM)}\le C\cE_s(\hat\varphi_k(U))\|u\circ \hat\varphi_k^{-1}\|_{H^s(\hat\varphi_k(U))},$$
where $C$ depends only on $\hat\cA,\hat\Gamma,s,c$.
\end{lemma}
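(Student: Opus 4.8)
The plan is to reduce the statement to a local extension in $\RR^d$ followed by a transfer to $\cM$ via the partition of unity, combining Lemma~\ref{admis} with the definition \eqref{Snorm} of the norm on $H^s(\cM)$. First I would set $v:=u\circ\hat\varphi_k^{-1}\in H^s(\hat\varphi_k(U))$ and use the finiteness of the extension constant $\cE_s(\hat\varphi_k(U))$ to pick an extension $\tilde v\in H^s(\RR^d)$ with $\tilde v|_{\hat\varphi_k(U)}=v$ and $\|\tilde v\|_{H^s(\RR^d)}\le 2\cE_s(\hat\varphi_k(U))\|v\|_{H^s(\hat\varphi_k(U))}$ (or take an infimizing sequence to avoid the factor $2$). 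Since $U\subset\hat U^c_k\subset\hat U_k$ and $\hat\varphi_k$ is a chart on $\hat U_k$, the pull-back $w:=\tilde v\circ\hat\varphi_k$ is defined on all of $\hat U_k$, agrees with $u$ on $U$, and lies in $H^s(\hat U_k)$ in the obvious sense.

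Next I would globalize $w$ using the cut-off $\hat\gamma_k$. Define $\tilde u:=\hat\gamma_k\,w$ on $\hat U_k$ and $\tilde u:=0$ on $\cM\setminus\supp\hat\gamma_k$; since $\supp\hat\gamma_k\subset\hat U_k$ is compact, this is a well-defined element of $H^s(\cM)$. On $U$ we do \emph{not} in general have $\tilde u=u$, because $\hat\gamma_k$ need not equal $1$ there; so instead I would define $\tilde u$ by a correction: $\tilde u := w$ won't work globally either since $w$ is only defined on $\hat U_k$. The clean fix is to keep $\tilde u:=\hat\gamma_k w + (1-\hat\gamma_k)\cdot(\text{something agreeing with }u\text{ on }U)$ — but there is no global function agreeing with $u$. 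Hence the correct route is to observe that Lemma~\ref{ext} as stated does \emph{not} force $\hat\gamma_k=1$ on $U$; re-reading it, it only requires $\tilde u|_U=u$, and the natural candidate is obtained by transporting a global $\RR^d$-extension through the single chart $\hat\varphi_k$. So I would instead pick the extension at the level of the whole manifold chart: let $\chi\in C^\infty(\cM)$ with $\chi\equiv 1$ on a neighborhood of $\overline U$ and $\supp\chi\subset\hat U_k$ (such $\chi$ exists since $\overline U\subset\hat U^c_k$ is compact inside the open set $\hat U_k$), and set $\tilde u:=\chi\cdot(\tilde v\circ\hat\varphi_k)$, extended by $0$ outside $\hat U_k$. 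Then $\tilde u|_U=u$ since $\chi|_U=1$ and $\tilde v|_{\hat\varphi_k(U)}=v$, and $\tilde u\in H^s(\cM)$ because it is a compactly-supported function inside a single chart.

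It remains to estimate $\|\tilde u\|_{H^s(\cM)}$. By \eqref{Snorm}, $\|\tilde u\|_{H^s(\cM)}^2=\sum_{\ell=1}^{\hat n}\|\tilde u\,\hat\gamma_\ell\circ\hat\varphi_\ell^{-1}\|_{H^s(\RR^d)}^2$. For each $\ell$, the function $\tilde u\,\hat\gamma_\ell$ is supported in $\hat U_k\cap\hat U_\ell$, so $\tilde u\,\hat\gamma_\ell\circ\hat\varphi_\ell^{-1}=(\chi\hat\gamma_\ell\circ\hat\varphi_\ell^{-1})\cdot\big(\tilde v\circ(\hat\varphi_k\circ\hat\varphi_\ell^{-1})\big)$ on $\hat\varphi_\ell(\hat U_k\cap\hat U_\ell)$ and zero elsewhere. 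Applying the Leibniz product rule and the chain-rule estimates (Sections 1.2, 1.63, 3.41 of \cite{Adams03}) exactly as in the proof of Lemma~\ref{admis} — using that the derivatives up to order $s$ of $\chi\hat\gamma_\ell\circ\hat\varphi_\ell^{-1}$ and of the transition maps $\hat\varphi_k\circ\hat\varphi_\ell^{-1}$, together with the determinants of the relevant Jacobians, are bounded by constants depending only on $\hat\cA,\hat\Gamma,s,c$ (the cut-off $\chi$ can be fixed once and for all depending on these data, hence on $c$) — yields $\|\tilde u\,\hat\gamma_\ell\circ\hat\varphi_\ell^{-1}\|_{H^s(\RR^d)}\le C\|\tilde v\|_{H^s(\RR^d)}$. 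Summing over the finitely many $\ell$ and using $\|\tilde v\|_{H^s(\RR^d)}\le C\cE_s(\hat\varphi_k(U))\|u\circ\hat\varphi_k^{-1}\|_{H^s(\hat\varphi_k(U))}$ gives the claimed bound.

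The main obstacle is bookkeeping rather than conceptual: one must make sure the auxiliary cut-off $\chi$ (needed to turn a single-chart function into a global one while preserving the values on $U$) can be chosen with bounds depending only on the fixed data $\hat\cA,\hat\Gamma,s,c$ and not on $U$ itself; this works because $\overline U\subset\hat U^c_k$, so $\chi$ can be taken to depend only on $k$ and $c$. The rest is the same Leibniz-plus-chain-rule computation already carried out for Lemma~\ref{admis}.
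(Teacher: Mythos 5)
Your construction is essentially the same as the paper's: extend $u\circ\hat\varphi_k^{-1}$ to $H^s(\RR^d)$, pull back through $\hat\varphi_k$, multiply by a cutoff that equals $1$ on $\hat\varphi_k(\hat U^c_k)$ and vanishes off $\hat\varphi_k(\hat U_k)$ (so it depends only on $k$ and $c$, not on $U$), and estimate via Leibniz and chain rule as in Lemma~\ref{admis}. The only difference is expository: you first detour through the non-working idea of using $\hat\gamma_k$ as the cutoff before arriving at the correct choice, whereas the paper goes directly to the cutoff $\chi_{k,c}$ that is identically $1$ on $\hat\varphi_k(\hat U^c_k)$ — which also removes any need to worry about $\overline U$ at all.
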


\begin{proof} Let $w\in H^s(\RR^d)$ be an extension of $u\circ \hat\varphi_k^{-1}$ such that 
$$
\|w\|_{H^s(\RR^d)}\le 2 \cE_s(\hat\varphi_k(U)) \|u\circ \hat\varphi_k^{-1}\|_{H^s(\hat\varphi_k(U))}.$$
We define $\tilde u$ as follows,
$$
\tilde u(x)=
\begin{cases}
(\chi_{k,c} w\circ \hat\varphi_{k})(x), & \text{if } x\in \hat U_{k},\\
0, &\text{otherwise,}
\end{cases} $$
where $\chi_{k,c}\in C^\infty(\RR^d)$ is a cutoff function, see e.g.~\cite{Lee13}, that satisfies
$$
\chi_{k,c}(x)=1,\quad x\in \hat\varphi_k(\hat U^c_k),\quad \text{and}\quad \chi_{k,c}(x)=0,\quad x\notin \hat\varphi_k(\hat U_k).$$
Then $\tilde u|_{U}=u$, and for any $\ell$ by the Leibniz and the chain rules,
\begin{align*}
\|\tilde u\hat\gamma_\ell\circ \hat\varphi_\ell^{-1}\|_{H^s(\RR^d)}
&=\|(\hat\gamma_\ell\circ \hat\varphi_\ell^{-1})
(\chi w\circ \hat\varphi_k\circ \hat\varphi_\ell^{-1})\|_{H^s(\hat\varphi_\ell(\hat U_k\cap\hat U_\ell))}\\
&\le C_1\|w\|_{H^s(\hat\varphi_k(\hat U_k\cap\hat U_\ell))},
\end{align*}
where $C_1$ depends only on $c,s$ and the choice of  $\hat\cA$, $\hat\Gamma$ and the cutoff function $\chi_{k,c}$. 
Clearly, cutoff functions $\chi_{k,c}$, $k=1,\ldots,\hat n$, may be chosen once and for all as soon as other parameters are fixed. Hence
$$
\|\tilde u\|_{H^s(\cM)}^2=\sum_{\ell=1}^{\hat n}\|\tilde u\hat\gamma_\ell\circ \hat\varphi_\ell^{-1}\|_{H^s(\RR^d)}^2
\le 4\hat n C_1^2\cE^2_s(\hat\varphi_k(U))\|u\circ \hat\varphi_k^{-1}\|_{H^s(\hat\varphi_k(U))}^2.\qedhere$$
\end{proof} 


If $K$ is a reproducing kernel on $\cM$, 
$U$ an open set in $\cM$, and $\varphi:U\to\RR^d$ a homeomorphism from $U$ to 
$\varphi(U)\subset\RR^d$,
then 
$K^\varphi(x,y):=K(\varphi^{-1}(x),\varphi^{-1}(y))$, $x,y\in \varphi(U)$,
is obviously the reproducing kernel for the Hilbert space
\begin{equation}\label{Hphi}
H_K(\varphi(U))=\{u\circ\varphi^{-1}:u \in H_K(U)\}
\end{equation}
isometric to $H_K(U)$, with the norm given by
$$
\|u\circ\varphi^{-1}\|_{K^\varphi,\varphi(U)}=\|u\|_{K,U},$$
where we use the notation introduced in Lemma~\ref{restrK}.
We show an analogue of Lemma~\ref{SobK}, relating the native space norm on $U$ to the Sobolev norm on its 
chart image in $\RR^d$ in the case when $K$ is a reproducing kernel for a Sobolev space.

\begin{lemma}\label{SobKm} Let $K$ be a reproducing kernel for $H^s(\cM)$, with an integer $s>d/2$, and let
 $U\subset \cM$ be a domain such that $U\subset \hat U^c_k$ for  
some $c>0$ and $k$. Then $H_K(\hat\varphi_k(U))\subset H^s(\hat\varphi_k(U))$ and
$$
\|u\circ \hat\varphi_k^{-1}\|_{H^s(\hat\varphi_k(U))}\le C_1\|u\|_{K,U},\qquad u\in H_K(U).$$
Moreover, if the extension constant $\cE_s(\hat\varphi_k(U))$ is finite, then 
$H_K(\hat\varphi_k(U))= H^s(\hat\varphi_k(U))$ and
$$
\|u\|_{K,U}\le C_2\cE_s(\hat\varphi_k(U))\|u\circ \hat\varphi_k^{-1}\|_{H^s(\hat\varphi_k(U))},
\qquad u\in H_K(U).$$
The constants $C_1,C_2$ depend only on $K$, $\hat\cA$, $\hat\Gamma$, $c$ and $s$.
\end{lemma}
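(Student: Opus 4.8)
The plan is to reduce the statement to the Euclidean picture of Lemma~\ref{SobK} together with two ingredients already at hand: Aronszajn's restriction principle (Lemma~\ref{restrK}) applied to the kernel $K$ on all of $\cM$, and the single-chart-to-global norm comparisons from Section~\ref{Sob}. Throughout I will use that, because $K$ is a reproducing kernel for $H^s(\cM)$, the native-space norm $\|\cdot\|_{K,\cM}$ is equivalent to the fixed norm $\|\cdot\|_{H^s(\cM)}$ of \eqref{Snorm} with constants depending only on $K$, $\hat\cA$, $\hat\Gamma$, and that $\hat\varphi_k$ is a chart on all of $\hat U_k\supset\hat U^c_k\supset U$, so that the compositions with $\hat\varphi_k^{-1}$ below make sense.

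For the first inclusion, take $u\in H_K(U)$. By Lemma~\ref{restrK} there is $\tilde u\in H_K(\cM)$ with $\tilde u|_U=u$ and $\|\tilde u\|_{K,\cM}=\|u\|_{K,U}$, hence $\|\tilde u\|_{H^s(\cM)}\le C\|u\|_{K,U}$ by the norm equivalence. Since $\tilde u|_U=u$ and $\hat\varphi_k(U)\subset\hat\varphi_k(\hat U^c_k)$, monotonicity of the norm \eqref{Sobint} under restriction to a smaller open set (this is where the integrality of $s$ is used) gives $\|u\circ\hat\varphi_k^{-1}\|_{H^s(\hat\varphi_k(U))}\le\|\tilde u\circ\hat\varphi_k^{-1}\|_{H^s(\hat\varphi_k(\hat U^c_k))}$. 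Now \eqref{loc1}, applied to $\tilde u$, bounds the right-hand side by $C\|\tilde u\hat\gamma_k\circ\hat\varphi_k^{-1}\|_{H^s(\RR^d)}$, and the latter is a single summand in the sum-of-squares defining $\|\tilde u\|_{H^s(\cM)}$, hence $\le\|\tilde u\|_{H^s(\cM)}$. Chaining these inequalities yields $\|u\circ\hat\varphi_k^{-1}\|_{H^s(\hat\varphi_k(U))}\le C_1\|u\|_{K,U}$; since $\|u\circ\hat\varphi_k^{-1}\|_{K^{\hat\varphi_k},\hat\varphi_k(U)}=\|u\|_{K,U}$ by the identification \eqref{Hphi}, this is exactly the first claimed bound and gives $H_K(\hat\varphi_k(U))\subset H^s(\hat\varphi_k(U))$.

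For the reverse direction, assume $\cE_s(\hat\varphi_k(U))<\infty$ and let $u:U\to\RR$ with $u\circ\hat\varphi_k^{-1}\in H^s(\hat\varphi_k(U))$. Lemma~\ref{ext} supplies $\tilde u\in H^s(\cM)$ with $\tilde u|_U=u$ and $\|\tilde u\|_{H^s(\cM)}\le C\,\cE_s(\hat\varphi_k(U))\,\|u\circ\hat\varphi_k^{-1}\|_{H^s(\hat\varphi_k(U))}$. By the norm equivalence $\tilde u\in H_K(\cM)$ and $\|\tilde u\|_{K,\cM}\le C\|\tilde u\|_{H^s(\cM)}$, and since $\tilde u$ is one admissible extension of $u$, Lemma~\ref{restrK} gives $u\in H_K(U)$ with $\|u\|_{K,U}\le\|\tilde u\|_{K,\cM}$. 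Combining, $\|u\|_{K,U}\le C_2\,\cE_s(\hat\varphi_k(U))\,\|u\circ\hat\varphi_k^{-1}\|_{H^s(\hat\varphi_k(U))}$, which is the second bound; together with the first part it also yields the equality $H_K(\hat\varphi_k(U))=H^s(\hat\varphi_k(U))$.

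I do not expect a genuine obstacle here: the only non-routine input is the reuse of the partition-of-unity estimate \eqref{loc1} (and the companion construction inside Lemma~\ref{ext}) to move between a single chart and the global norm \eqref{Snorm}. The two points needing care are that the norm \eqref{Sobint} is monotone under shrinking the open set, and that every constant produced this way depends only on $K$, $\hat\cA$, $\hat\Gamma$, $c$ and $s$ — all dependence on the particular domain $U$ being carried by the explicit factor $\cE_s(\hat\varphi_k(U))$, exactly as in Lemma~\ref{SobK}.
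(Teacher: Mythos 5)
Your proof is correct and follows essentially the same route as the paper's: Lemma~\ref{restrK} to pass between $H_K(U)$ and $H_K(\cM)$, the equivalence of $\|\cdot\|_{K,\cM}$ with $\|\cdot\|_{H^s(\cM)}$, estimate \eqref{loc1} for the forward inclusion, and Lemma~\ref{ext} for the reverse. The only cosmetic difference is that you spell out the monotonicity-under-restriction step and the single-summand bound, which the paper folds into ``as in the proof of Lemma~\ref{admis}.''
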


\begin{proof} Let $v\in H_K(\hat\varphi_k(U))$, that is $v=u\circ \hat\varphi_k^{-1}$ for some $u\in H_K(U)$. 
Then $u$ is the restriction to $U$ of a function in 
$H_K(\cM)$, denoted $u$ again. By Lemma~\ref{restrK} we assume without loss of generality that
$\|u\|_{K,U}=\|u\|_{K,\cM}$.
As in the proof of Lemma~\ref{admis}, we obtain by \eqref{loc1},
$$
\|u\circ \hat\varphi_k^{-1}\|_{H^s(\hat\varphi_k(U))}\le\|u\circ \hat\varphi_k^{-1}\|_{H^s(\hat\varphi_k(\hat U^{c}_k))}
\le C \|u\hat\gamma_k\circ \hat\varphi_k^{-1}\|_{H^s(\RR^d)}\le C \|u\|_{H^s(\cM)},$$
where $C$ depends only on $\hat\cA$, $\hat\Gamma$, $c$ and $s$. Since the norms 
$\|\cdot\|_{K,\cM}$ and $\|\cdot\|_{H^s(\cM)}$ are equivalent, the first assertion of the lemma follows.

We now assume that $\cE_s(\hat\varphi_k(U))<\infty$. For any $v\in H^s(\hat\varphi_k(U))$, the function 
$u=v\circ \hat\varphi_k$ is by Lemma~\ref{ext} the restriction to $U$ of a function $\tilde u\in H^s(\cM)$
such that
$$
\|\tilde u\|_{H^s(\cM)}\le C\cE_s(\hat\varphi_k(U))\|v\|_{H^s(\hat\varphi_k(U))},$$
where $C$ depends only on $c,s,\hat\cA,\hat\Gamma$. Since $H^s(\cM)=H_K(\cM)$ with equivalent norms, 
it follows from Lemma~\ref{restrK} that $\|u\|_{K,U}\le C\|\tilde u\|_{H^s(\cM)}$, where $C$ depends only on 
$K,\hat\cA,\hat\Gamma$. Hence $v\in H_K(\hat\varphi_k(U))$, and 
the second assertion follows.
\end{proof}

Given a $c$-admissible atlas $\cA=\{(U_\ell,\varphi_\ell)\}_{\ell=1}^{m}$  for $\cM$, with $c>0$, we
obtain from Lemma~\ref{SobKm}, for any $u\in H^s(\cM)$ and $\ell=1,\ldots,m$,
$$
C_1^{-1}\|u\circ \varphi_\ell^{-1}\|_{H^s(\varphi_\ell(U_\ell))}\le \|u|_{U_\ell}\|_{K,U_\ell}
\le C_2\cE_s(\varphi_\ell(U_\ell))\|u\circ \varphi_\ell^{-1}\|_{H^s(\varphi_\ell(U_\ell))},$$
with constants $C_1,C_2$ of Lemma~\ref{SobKm}.
Hence, it follows by Lemma~\ref{admis} that
\begin{equation}\label{admisKU}
C_1\|u\|_{H^s(\cM)}\le \Big(\sum_{\ell=1}^{m}\|u|_{U_\ell}\|_{K,U_\ell}^2\Big)^{1/2}
\le C_2\sqrt{\mu_\cA}\max_{\ell=1,\ldots,m}\cE_s(\varphi_\ell(U_\ell))\,\|u\|_{H^s(\cM)},
\end{equation}
for all $u\in H^s(\cM)$, with constants $C_1,C_2$ depending only on $K$, $\hat\cA$, $\hat\Gamma$, $c$ and $s$.

Since $\|u\|_{H^s(\cM)}$ and the native space norm $\|u\|_{K}$ are equivalent, we also get the same double estimate
with $\|u\|_{H^s(\cM)}$ replaced by $\|u\|_{K}$. Thus, a detour via Sobolev spaces delivers a relation 
between $\|u\|_{K}$ and $\|u|_{U_\ell}\|_{K,U_\ell}$, $\ell=1,\ldots,m$.

\subsection{Extension constants}\label{extc}

Since we will need to bound the extension constants $\cE_s(\hat\varphi_k(U))$ uniformly for families of local subdomains
$U\subset\cM$, compare \eqref{admisKU}, we consider the question of estimating $\cE_s(\Omega)$ for Lipschitz domains $\Omega\subset\RR^d$. 

A bounded domain $\Omega$ satisfies the Lipschitz condition if it has a locally Lipschitz boundary, that is, each point on the
boundary of $\Omega$ has a neighborhood whose intersection with $\partial\Omega$ is the graph of a function satisfying the
Lipschitz condition, with uniformly bounded Lipschitz constants. 

For any bounded domain $\Omega\subset\RR^d$, we  denote by 
$\chi(\Omega)$ its \emph{chunkiness parameter} \cite{BrennerScott08}, the quotient of the diameter of $\Omega$ to the maximum radius
of a ball $B\subset\Omega$ such that $\Omega$ is star-shaped with respect to $B$. Clearly, $\chi(\Omega)<\infty$ if and only if
$\Omega$ is bounded and star-shaped with respect to a ball.
It is known that such domains satisfy the Lipschitz
condition, see \cite[Section 4.3]{Burenkov98} 
or \cite[Section 1.3.2]{MazyaPoborchi97}. 

Since $\partial\Omega$ is compact, there is a finite system $\cL$ consisting of $P_\cL$ open sets in $\RR^d$ that cover 
$\partial\Omega$ such
that their intersections with $\partial\Omega$ are the graphs of continuous functions with Lipschitz constants not exceeding 
some number $M_\cL$. Such a system is not unique, and  a smaller $M_\cL$ may be obtained at the expense of increasing $P_\cL$.
We will call any such system $\cL$ a \emph{Lipschitz cover} of $\partial\Omega$. Apart from $P_\cL$ and $M_\cL$, 
another parameter of $\cL$ important for the extension constant is a number $r_\cL>0$ such that 
for each $x\in\partial\Omega$ the  ball with radius $r_\cL$ centered at $x$  is contained in one of the sets of $\cL$. 
It is easy to see that a positive $r_\cL$ exists for any Lipschitz cover. 

The Stein Extension Theorem \cite{Stein} shows that $\cE_s(\Omega)$ is bounded from the above by a constant depending only on
$d,s,P_\cL,M_\cL$ and $r_\cL$.

Since we will need bounds for $\cE_s(\Omega)$ for domains with diameter tending to zero, we will  use 
the parameter $\tau_\cL:=r_\cL/\diam(\Omega)$ in place of $r_\cL$.
The following estimate follows immediately from the results in \cite[Section 3.1.5]{MazyaPoborchi97}.

\begin{lemma}\label{extce} Let $\Omega\subset\RR^d$ be a bounded domain star-shaped with respect to a ball, and let $\cL$ be a Lipschitz cover
of its boundary. If $\chi(\Omega)\le\chi$, $P_\cL\le P$, $M_\cL\le M$ and $\tau_\cL\ge \tau>0$, then for any integer $s\ge0$,
$$
\cE_s(\Omega)\le Ch^{-d/2},\qquad h=\diam(\Omega),$$
where $C$ depends only on $d,s,\chi,P,M$ and $\tau$.
\end{lemma}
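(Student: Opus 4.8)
The plan is to reduce the estimate on $\cE_s(\Omega)$ to the explicit extension operator of Stein type described in \cite[Section 3.1.5]{MazyaPoborchi97}, tracking how its norm scales under dilation. First I would observe that all the geometric data of $\Omega$ are scale-invariant in the normalized sense used in the hypotheses: the chunkiness parameter $\chi(\Omega)$, the number $N_\cL$ and Lipschitz constant $M_\cL$ of a Lipschitz cover, and the \emph{relative} radius $\tau_\cL=r_\cL/\diam(\Omega)$ are all unchanged when $\Omega$ is replaced by $\lambda\Omega$ (with $r_\cL$ replaced by $\lambda r_\cL$). So I would fix the rescaled domain $\widetilde\Omega:=h^{-1}\Omega$, which has $\diam(\widetilde\Omega)=1$ and the same $\chi,N,M,\tau$ bounds, apply the Stein Extension Theorem (as quoted in Section~\ref{extc}) to get $\cE_s(\widetilde\Omega)\le C$ with $C=C(d,s,\chi,N,M,\tau)$, and then transfer back to $\Omega$ by the change of variables $x\mapsto hx$.

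The second step is the scaling computation. Given $u\in H^s(\Omega)$, set $u_h(y):=u(hy)$ for $y\in\widetilde\Omega$; then $u_h\in H^s(\widetilde\Omega)$ and, since $s$ is a nonnegative integer, the seminorms transform by $|u_h|_{H^k(\widetilde\Omega)}=h^{k-d/2}|u|_{H^k(\Omega)}$, so that for $0\le k\le s$ one has $h^{k}|u_h|_{H^k(\widetilde\Omega)}=h^{d/2}\cdot h^{k}\,h^{k-d/2}\,$\dots — more cleanly, $h^{-d/2}\|u\|_{H^s(\Omega)}$ and $\|u_h\|_{H^s(\widetilde\Omega)}$ differ by a factor bounded above and below by powers of $h$ that are themselves controlled once $h\le\diam(\cM)$ (only finitely many scales matter, or one simply keeps the $h$-powers explicit). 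Let $\tilde w\in H^s(\RR^d)$ be a near-optimal extension of $u_h$, so $\|\tilde w\|_{H^s(\RR^d)}\le (1+\varepsilon)\cE_s(\widetilde\Omega)\|u_h\|_{H^s(\widetilde\Omega)}$; then $w(x):=\tilde w(x/h)$ extends $u$ and, reversing the same seminorm scaling, $\|w\|_{H^s(\RR^d)}\le C\,h^{-d/2}\cE_s(\widetilde\Omega)\,\|u\|_{H^s(\Omega)}$. Taking the supremum over $u$ and the infimum over extensions gives $\cE_s(\Omega)\le C h^{-d/2}\cE_s(\widetilde\Omega)\le C' h^{-d/2}$, which is the claim.

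The one point requiring care — and the main obstacle — is verifying that the constant produced by the Stein theorem on $\widetilde\Omega$ really depends \emph{only} on the listed normalized quantities, and not on some hidden absolute-scale feature of the boundary charts. This is exactly why the hypotheses are phrased with $\tau_\cL=r_\cL/\diam(\Omega)$ rather than $r_\cL$ itself: the cited results in \cite[Section 3.1.5]{MazyaPoborchi97} give an extension operator whose norm is estimated in terms of the Lipschitz constants of the boundary graphs and the ``size'' of the cover relative to the domain, and one must check that the balls of radius $r_\cL$ centered at boundary points being contained in chart neighborhoods is the right scale-free way to encode this. The star-shapedness hypothesis $\chi(\Omega)\le\chi$ enters to guarantee the Lipschitz property in the first place (as noted before Lemma~\ref{sampling}) and to allow a single reference partition of unity subordinate to $\cL$ with derivative bounds controlled by $\chi$ and $\tau$; all the remaining estimates — cutting off, reflecting across the Lipschitz graphs, and summing over the $N$ pieces — are routine and produce only constants of the permitted form. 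Hence the lemma follows.
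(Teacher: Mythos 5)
Your proposal rests on a pure rescaling argument, and the scaling does not produce the exponent $-d/2$; it produces $-s$. Write $u_h(y):=u(hy)$, so $|u_h|_{H^k(\widetilde\Omega)}=h^{k-d/2}|u|_{H^k(\Omega)}$, and let $\tilde w$ be an extension of $u_h$ with $\|\tilde w\|_{H^s(\RR^d)}\le(1+\eps)\cE_s(\widetilde\Omega)\|u_h\|_{H^s(\widetilde\Omega)}$. Then for $h\le 1$,
\begin{align*}
\|w\|_{H^s(\RR^d)}^2&=\sum_{k=0}^s h^{d-2k}|\tilde w|_{H^k(\RR^d)}^2\le h^{d-2s}\|\tilde w\|_{H^s(\RR^d)}^2\\
&\le(1+\eps)^2 h^{d-2s}\cE_s(\widetilde\Omega)^2\sum_{k=0}^s h^{2k-d}|u|_{H^k(\Omega)}^2\le(1+\eps)^2 h^{-2s}\cE_s(\widetilde\Omega)^2\|u\|_{H^s(\Omega)}^2,
\end{align*}
i.e.\ $\cE_s(\Omega)\le Ch^{-s}$, not $Ch^{-d/2}$. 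The phrase ``differ by a factor bounded above and below by powers of $h$ that are themselves controlled'' hides exactly the problem: the worst contributions are $k=s$ on the pullback side and $k=0$ on the pushforward side, and they compound to $h^{-s}$. Since in the paper $s$ satisfies $s>2\kappa+d/2\ge d/2$, this is strictly weaker than what the lemma asserts and would not suffice for the exponents in Theorem~\ref{error_bound}.

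The missing idea is a decomposition that separates the polynomial (low-frequency) part of $u$ from the remainder, so that the cutoff for the polynomial part can be taken at a \emph{fixed} scale rather than at scale $h$. Concretely: take $p$ to be the averaged Taylor polynomial of degree $<s$ (as in the Bramble--Hilbert constructions already used in Lemma~\ref{sampling}), so that $|u-p|_{H^k(\Omega)}\le C h^{s-k}|u|_{H^s(\Omega)}$; extend $u-p$ by the (scaled) Stein operator, which by these estimates yields $\|\widetilde{u-p}\|_{H^s(\RR^d)}\le C|u|_{H^s(\Omega)}$ with no adverse $h$-powers; and extend $p$ as $\chi p$ with $\chi$ a cutoff equal to $1$ on the unit ball and supported on $B_2$, independent of $h$. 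The coefficients of $p$ are averages of derivatives of $u$ over a ball of radius $\sim h$ and are therefore bounded by $Ch^{-d/2}\|u\|_{H^s(\Omega)}$, which is precisely where the factor $h^{-d/2}$ enters, and that is the only factor of $h$ the argument produces. Your proposal, treating the extension operator as a black box acting on the full $H^s$ norm and relying on rescaling alone, cannot reach this sharper bound; the appeal to the scale-invariance of $\chi,N,M,\tau$ is correct but addresses only the constant in front, not the power of $h$.
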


\subsection{Sampling inequalities}\label{sample}
Sampling inequalities bound a weaker Sobolev norm of a function $f$ in terms of its values on a finite set $X\subset\Omega$ and a stronger Sobolev norm of $f$,
see the survey \cite{RSZ08} and references therein. In Lemma~\ref{sampling} we follow a standard way of proving sampling inequalities, as e.g.\ in  \cite{NWW05,WR05},
but obtain a local version that makes use of the diameter of the domain $\Omega$ as the discretization parameter $h$ instead of the fill distance of $X$. 
Therefore our estimates \eqref{sam2}, \eqref{saminf} depend on a polynomial Lebesgue constant, which is usually
avoided in global estimates by requiring that $X$ is sufficiently dense in $\Omega$. 
We refer to \cite[Section 4]{DavySchaback16} for a demonstration that this requirement is too restrictive for the
setting of local error bounds to which we will apply the sampling inequalities in Section~\ref{ldm}. A different local sampling inequality in terms of a growth function is
given in \cite[Corollary 13]{DavySchaback16}.

%
%
%
%
%

For any finite  $Y\subset\RR^d$ we consider the \emph{Lebesgue function}
$$
\lambda_{r}(y,Y)=\sup\{|p(y)|:\|p|_Y\|_\infty\le 1\text{ for all }p\in \Pi^d_r\},
\quad y\in\RR^d,\quad r\in\NN,$$
where $\Pi^d_r$, $r\in\NN$, 
denotes the space of $d$-variate polynomials of total order at most $r$ (or degree less than $r$).
Note that $\lambda_{r}(y,Y)<\infty$ for all $y\in\RR^d$ if and only if $Y$ is a \emph{$r$-determining set}, that is
$p\in \Pi^d_r$ and $p|_Y=0$ imply $p\equiv 0$. For any set $\Omega\subset\RR^d$, the \emph{Lebesgue constant} is given by
$$
\lambda_{r}(\Omega,Y)=\sup_{y\in \Omega}\lambda_{r}(y,Y).$$
Note that the \emph{Lebesgue constant} is the reciprocal of the \emph{norming constant} \cite{JSW99}. On the other hand, the Lebesgue function is a special case of a 
\emph{growth function} \cite{Davy07,DavySchaback18}, and we rely in  the proof on a general duality theory for growth functions \cite[Theorem 9]{DavySchaback18}
that replaces the ``local polynomial reproduction'' arguments, compare \cite[Chapter 3]{Wendland}.

\begin{lemma}\label{sampling} Let $\Omega\subset\RR^d$ be a bounded domain with chunkiness parameter $\chi(\Omega)\le\chi$ for some
constant $\chi<\infty$,
let $X=\{x_1,\ldots,x_n\}\subset\Omega$ be an $r$-determining set for some integer $r>d/2$, and let $v\in H^r(\Omega)$. 
We set $h=\diam(\Omega)$.
Then for any integer $k$ with $0\le k<r$,
\begin{equation}\label{sam2}
|v|_{H^k(\Omega)}\le Ch^{-k}\lambda_r(\Omega,X)\big(h^{d/2}\|v|_X\|_\infty + h^{r}|v|_{H^r(\Omega)}\big),
\end{equation}
where $C$ depends only on $r,d$ and $\chi$. Moreover,  for any $\beta\in\ZZ_+^d$ with $|\beta|<r-d/2$,
\begin{equation}\label{saminf}
|\partial^\beta v(x)|\le Ch^{-|\beta|}\lambda_r(\Omega,X)\big(\|v|_X\|_\infty + h^{r-d/2}|v|_{H^r(\Omega)}\big),
\quad x\in\Omega,
\end{equation}
where $C$ depends only on $r,d$ and $\chi$. 
\end{lemma}

\begin{proof}
By Sobolev embedding we identify $v$ with an $m$ times  continuously differentiable function in $\Omega$,
where $m$ is the largest integer such that $m<r-d/2$. 
By Proposition 4.3.2,
equation (4.1.18) and Lemma 4.3.8
in \cite{BrennerScott08} there is a polynomial $p\in \Pi^d_r$ such that
\begin{align}
\label{BHinf}
|\partial^\beta v(x)-\partial^\beta p(x)|&\le C_1h^{r-|\beta|-d/2}|v|_{H^r(\Omega)},\quad x\in\Omega,\quad |\beta|< r-d/2,\\
\label{BH}
|v- p|_{H^k(\Omega)}&\le C_2h^{r-k}|v|_{H^r(\Omega)},\quad k\le r,
\end{align}
where both $C_1$ and $C_2$ depend only on $r,d$ and $\chi$. 

Let $\alpha\in\ZZ_+^d$ with $|\alpha|=k$. Since $X$ is $r$-determining, for any $x\in\Omega$ there exist weights 
$w_j\in\RR$ such that
$$
\partial^\alpha p(x)=\sum_{j=1}^n w_j p(x_j)\quad\text{ and }\quad
\sum_{j=1}^n |w_j|= \sup\big\{\partial^\alpha q(x):q\in \Pi^d_r,\;\|q|_X\|\le 1\big\},$$
see \cite[Theorem 9]{DavySchaback18}.
By Markov inequality (see e.g.~\cite[Proposition 2.2]{NWW05}),
$$
|\partial^\alpha q(x)|\le C_3h^{-k}\|q\|_{L^\infty(\Omega)},\qquad q\in \Pi^d_r,$$ 
where $C_3$ depends only on $r,d$ and $\chi$. Hence
$$
\sum_{j=1}^n |w_j|\le C_3h^{-k}\lambda_r(\Omega,X).$$
By \eqref{BHinf},
$$
|v(x_j)-p(x_j)|\le C_1h^{r-d/2}|v|_{H^r(\Omega)},\quad j=1,\ldots,n.$$
Hence
\begin{align}
\begin{split}\label{pest}
|\partial^\alpha p(x)| &=  \Big|\sum_{j=1}^n w_j v(x_j)+\sum_{j=1}^n w_j \big(p(x_j)-v(x_j)\big)\Big|\\
&\le C_3h^{-k}\lambda_r(\Omega,X)\big(\|v|_X\|_\infty + C_1 h^{r-d/2}|v|_{H^r(\Omega)}\big),
\end{split}
\end{align}
which implies
$$
\|\partial^\alpha p\|_{L^2(\Omega)}
\le C_4C_3h^{-k+d/2}\lambda_r(\Omega,X)\big(\|v|_X\|_\infty + C_1h^{r-d/2}|v|_{H^r(\Omega)}\big),$$
where $C_4$ depends only on $d$.
By \eqref{BH},
$$
\|\partial^\alpha v-\partial^\alpha p\|_{L^2(\Omega)}\le C_2h^{r-k}|v|_{H^r(\Omega)},$$
and \eqref{sam2} follows since 
$\|\partial^\alpha v\|_{L^2(\Omega)}
\le \|\partial^\alpha v-\partial^\alpha p\|_{L^2(\Omega)}+\|\partial^\alpha p\|_{L^2(\Omega)}$
and $\lambda_r(\Omega,X)\ge1$. Similarly, \eqref{saminf} follows directly from \eqref{BHinf} and \eqref{pest}.
\end{proof}

\subsection{Local differentiation matrices}\label{ldm}
Let $L$ be a differential operator of order $m$ on $\cM$, $K$ the reproducing kernel of a Hilbert space $H_K(\cM)$
embedded into $C^m(\cM)$, and $X=\{x_1,\ldots,x_n\}\subset\cM$. For any $u\in C^m(\cM)$, an approximation of $Lu$ can be
obtained by applying $L$ to the kernel interpolant \eqref{kint},
\begin{align*}
\sigma&=\sum_{j=1}^n c_j K(\cdot,x_j),\quad \sigma(x_j)=u(x_j),\quad j=1,\ldots,n,\\
Lu&\approx L\sigma=\sum_{j=1}^n c_j K_L(\cdot,x_j).
\end{align*}
In particular, the approximation  $L\sigma|_X$ of $Lu|_X$ can be computed with the help of the \emph{differentiation matrix}
$W_X$, 
\begin{equation}\label{difm}
L\sigma|_X=W_X u|_X ,\quad\text{where}\quad W_X:=K_L|_{X}\,(K|_{X})^{-1}.
\end{equation}

If $U$ is an open set in $\cM$ and $\varphi:U\to\RR^d$ is a homeomorphism from $U$ to 
$U^\varphi:=\varphi(U)\subset\RR^d$, then we denote by $L^\varphi$ the differential operator $L$ in the local coordinates defined 
by $\varphi$, that is 
$$
L^\varphi v:=L(v\circ\varphi)\circ\varphi^{-1}$$
 for any sufficiently smooth function $v$ on 
$U^\varphi$. Then the kernel $K^\varphi(x,y)=K(\varphi^{-1}(x),\varphi^{-1}(y))$ of the space
$H_K(U^\varphi)$ defined in \eqref{Hphi} obviously satisfies
\begin{equation}\label{L1Kphi}
L^\varphi_iK^\varphi=(L_iK)^\varphi,\quad i=1,2,
\end{equation}
where $L_1K$ and $L_2K$ denote the result of applying the operator $L$ to the first, respectively, second argument of $K$,
according to the notation introduced in Section~\ref{pdk}.
In particular,
$$
W_X=K^\varphi_L|_{X^\varphi}\,(K^\varphi|_{X^\varphi})^{-1},$$
where $K^\varphi_L:=L^\varphi_1K^\varphi$ and $X^\varphi:=\varphi(X)$.

\begin{lemma}\label{lde} Let $L$ be a differential operator on $\cM$ of order $m$ with smooth coefficients, and 
let $K$ be a reproducing kernel for $H^s(\cM)$, with an integer $s>m+d/2$. 
Furthermore, let $X=\{x_1,\ldots,x_n\}\subset U$ for an open set $U\subset \hat U^c_k\subset\cM$
for some $k\in\{1,\ldots,\hat n\}$ and $c>0$. For $\varphi=\hat\varphi_k$, assume that   $U^\varphi$
is star-shaped with respect to a ball, with $\chi(U^\varphi)\le\chi<\infty$, and 
$X^\varphi$ is an $r$-determining set for some $r>m+d/2$ such that $K(\cdot,x)\in H^r(\cM)$ for all $x\in\cM$. 
Then for any $u\in H^r(\cM)$,
\begin{equation}\label{lde1}
\|Lu|_X-W_X u|_X\|_\infty\le 
C_1\lambda_r(U^\varphi,X^\varphi)h^{r-m-d/2}|u^\varphi-\sigma^\varphi|_{H^r(U^\varphi)},
\end{equation}
where $h=\diam(U^\varphi)$, $u^\varphi:=u\circ\varphi^{-1}$, $\sigma^\varphi:=\sigma\circ\varphi^{-1}$ and 
$C_1$ depends only on $r,d,\chi$ and $L$. 
Moreover, if  $X^\varphi$ is an $s$-determining set and $u\in H^s(\cM)$, then
\begin{align}\label{lde2}
\|Lu|_X-W_X u|_X\|_\infty&\le 
C_2\lambda_s(U^\varphi,X^\varphi)h^{s-m-d/2}\|u\|_{K,U}\\
\label{lde3}
&\le 
C_3\lambda_s(U^\varphi,X^\varphi)\cE_s(U^\varphi)h^{s-m-d/2}\|u^\varphi\|_{H^s(U^\varphi)},
\end{align}
where $C_2,C_3$ depend in addition on $K$, $\hat\cA$, $\hat\Gamma$ and $c$.
\end{lemma}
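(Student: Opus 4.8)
The plan is to start from identity \eqref{difm}, which gives $W_X u|_X = L\sigma|_X$ for the kernel interpolant $\sigma$ of $u$ at $X$; then $Lu|_X-W_X u|_X = L(u-\sigma)|_X$, and the key feature of the error function $v:=u-\sigma$ is that it \emph{vanishes on $X$}. I would first pass to the local coordinates $\varphi=\hat\varphi_k$: writing $v^\varphi:=u^\varphi-\sigma^\varphi$, using the representation $L^\varphi w=\sum_{|\alpha|\le m}a_{k,\alpha}\,\partial^\alpha w$ with the uniform coefficient bounds \eqref{aalpha}, and the relation $Lv(x_i)=L^\varphi v^\varphi(\varphi(x_i))$, one obtains
\[
|Lv(x_i)|\le C(L)\sum_{|\alpha|\le m}\bigl|\partial^\alpha v^\varphi(\varphi(x_i))\bigr|,
\]
with $C(L)$ depending only on $L$. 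Note that $v^\varphi\in H^r(U^\varphi)$ because $u\in H^r(\cM)$ and $\sigma$ is a finite kernel sum with $K(\cdot,x)\in H^r(\cM)$.

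The core of \eqref{lde1} is then the local sampling inequality \eqref{saminf}, applied to $v^\varphi$ on $\Omega=U^\varphi$ with the $r$-determining set $X^\varphi$ and $h=\diam(U^\varphi)$. Since $v^\varphi$ vanishes on $X^\varphi$ the data term in \eqref{saminf} drops out, and because $|\alpha|\le m<r-d/2$ one may take $\beta=\alpha$ to get $|\partial^\alpha v^\varphi(x)|\le C\lambda_r(U^\varphi,X^\varphi)\,h^{\,r-|\alpha|-d/2}|v^\varphi|_{H^r(U^\varphi)}$ for $x\in U^\varphi$. Summing over $|\alpha|\le m$, using that $h$ is bounded above by a fixed constant (as $U\subset\hat U^c_k$, whose closure is compact) to replace each $h^{\,r-|\alpha|-d/2}$ by $C\,h^{\,r-m-d/2}$, and maximizing over $i$, yields \eqref{lde1} with $C_1$ depending only on $r,d,\chi,L$.

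For \eqref{lde2} I would re-run the same argument with $r$ replaced by $s$ — which is admissible since $X^\varphi$ is $s$-determining, $s>m+d/2$, and $K(\cdot,x)\in H_K(\cM)=H^s(\cM)$ automatically by \eqref{repr1} — and then bound $|u^\varphi-\sigma^\varphi|_{H^s(U^\varphi)}$ by the native-space norm. Lemma~\ref{SobKm} gives $\|u^\varphi-\sigma^\varphi\|_{H^s(U^\varphi)}\le C\|u-\sigma\|_{K,U}$, so it remains to prove $\|u-\sigma\|_{K,U}\le\|u\|_{K,U}$, and this is the step I expect to require the most care. The trick is: let $\tilde u\in H_K(\cM)$ be the minimal-norm extension of $u|_U$, so $\|\tilde u\|_{K,\cM}=\|u\|_{K,U}$ by Lemma~\ref{restrK}; since $X\subset U$, the kernel interpolant of $\tilde u$ at $X$ has the same nodal values as $\sigma$ and hence equals $\sigma$, so \eqref{minnorm} gives $\|\tilde u-\sigma\|_{K,\cM}\le\|\tilde u\|_{K,\cM}=\|u\|_{K,U}$; and because $(\tilde u-\sigma)|_U=(u-\sigma)|_U$, Lemma~\ref{restrK} applied once more yields $\|u-\sigma\|_{K,U}\le\|\tilde u-\sigma\|_{K,\cM}\le\|u\|_{K,U}$. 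Combining the estimates proves \eqref{lde2}, with the extra dependence of $C_2$ on $K,\hat\cA,\hat\Gamma,c$ coming from Lemma~\ref{SobKm}.

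Finally, \eqref{lde3} follows from \eqref{lde2} via the second assertion of Lemma~\ref{SobKm}, namely $\|u\|_{K,U}\le C\,\cE_s(U^\varphi)\|u^\varphi\|_{H^s(U^\varphi)}$, whose hypothesis $\cE_s(U^\varphi)<\infty$ holds because a bounded domain star-shaped with respect to a ball is Lipschitz (and if $\cE_s(U^\varphi)=\infty$ the inequality is trivial). The only genuinely delicate point in the whole argument is the minimal-extension reduction used for \eqref{lde2}; the rest is Leibniz/chain-rule bookkeeping combined with the already established sampling inequality and native-space norm equivalences.
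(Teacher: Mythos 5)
Your proof is correct and follows essentially the same route as the paper's: pass to the local coordinates $\varphi=\hat\varphi_k$, apply the sampling inequality \eqref{saminf} to $v^\varphi=u^\varphi-\sigma^\varphi$ (whose data term vanishes on $X^\varphi$), combine over $|\alpha|\le m$ via the Leibniz representation of $L^\varphi$ and \eqref{aalpha}, and then pass to the native-space norm through Lemma~\ref{SobKm} together with the minimum-norm property and Lemma~\ref{SobKm} again for \eqref{lde3}. The only minor deviation is your extension-based derivation of $\|u-\sigma\|_{K,U}\le\|u\|_{K,U}$, which is valid but somewhat more roundabout than needed: since $X\subset U$, the restriction $\sigma|_U$ is already the $K|_U$-kernel interpolant of $u|_U$ at $X$, so \eqref{minnorm} applies directly in $H_K(U)$ without invoking the minimal-norm extension to $\cM$.
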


\begin{proof} By Sobolev embedding, the condition $r>m+d/2$ ensures that $u\in C^m(\cM)$, hence $Lu$ is well defined as a continuous function on $\cM$
as soon as $u\in H^r(\cM)$.
Since $U^\varphi$ is a Lipschitz domain, it follows that $\cE_s(U^\varphi)<\infty$, and Lemma~\ref{SobKm} shows that
$K^\varphi$ is a reproducing kernel for $H^s(U^\varphi)$. Since $s>m+d/2$, $H^s(U^\varphi)$ is embedded
into $C^m(\cM)$. Moreover, $\sigma^\varphi\in H^{r}(U^\varphi)$ as a linear combination of the functions $K^\varphi(\cdot,x_j)$, $j=1,\ldots,n$.

In view of \eqref{difm} and \eqref{L1Kphi}, the $i$-th component of the vector $Lu|_X-W_X u|_X$
has the form 
$$
Lu(x_i)-L\sigma(x_i)=L^\varphi u^\varphi(x^\varphi_i)-L^\varphi \sigma^\varphi(x^\varphi_i),$$
where  $x^\varphi_i=\varphi(x_i)$. Since $(u^\varphi-\sigma^\varphi)|_{X^\varphi}=0$,
the sampling inequality \eqref{saminf} implies
$$
|L^\varphi u^\varphi(x^\varphi_i)-L^\varphi \sigma^\varphi(x^\varphi_i)|
\le C\lambda_r(U^\varphi,X^\varphi)h^{r-m-d/2}|u^\varphi-\sigma^\varphi|_{H^r(U^\varphi)},$$
where $C$ depends only on $r,d,\chi$ and the constants in \eqref{aalpha}, which implies \eqref{lde1}.

In the case $r=s$ the condition that $K(\cdot,x)\in H^s(\cM)$ for all $x\in\cM$ is satisfied since $K$ is  a reproducing kernel for $H^s(\cM)$, and
we have by Lemma~\ref{SobKm},
$$
|u^\varphi-\sigma^\varphi|_{H^s(U^\varphi)}\le \|u^\varphi-\sigma^\varphi\|_{H^s(U^\varphi)}\le 
C\|u-\sigma\|_{K,U},$$
where $C$ depends only on $K$, $\hat\cA$, $\hat\Gamma$ and $c$.
Then the minimum norm property \eqref{minnorm} of the kernel interpolant implies 
$\|u-\sigma\|_{K,U}\le \|u\|_{K,U}$, and \eqref{lde2} follows.

Finally, by Lemma~\ref{SobKm}, $\|u\|_{K,U}\le C\cE_s(U^\varphi)\|u^\varphi\|_{H^s(U^\varphi)}$,
 where $C$ depends only on $K$, $\hat\cA$, $\hat\Gamma$, $c$ and $s$, which implies \eqref{lde3}.
\end{proof}

Different types of local error bounds for kernel based numerical differentiation can be found in 
\cite{DavySchaback16,DavySchaback19}.  

Note that for an operator $L$ and kernel $K$ satisfying the hypotheses of Lemma~\ref{KL}, the estimate \eqref{lde1} holds, 
thanks to Lemma~\ref{smkerL}, for all $r$ satisfying $2\kappa + d/2 < r< 2s-d/2$ such that $r-s$ is a multiple of $2\kappa$. 
Hence, it holds in this case for all integer $r$ with $2\kappa + d/2 < r\le s+2\kappa$.

\section{Error bounds}\label{estimates}
 
The discrete solution $\hu$ is well-defined by \eqref{leastsq}  for any node set $X$ with subsets $X_\ell$ such that $\bigcup_{\ell =1}^m X_\ell =X$
as soon as $L$ 
is positive and self-adjoint with respect to the inner product defined by the reproducing kernel
$K$ for $H^s(\cM)$ with $s>\kappa+d/2$.
However, in order to provide an estimate of the error of  $\hu$,
 \begin{equation}\label{der}
\|\hat u-u|_{X}\|_\infty,
\end{equation}
we will make a number of additional assumptions. 
In particular, we will need to assume that
\begin{equation}\label{As}
\text{$s$ is an integer satisfying $s>2\kappa+d/2$,}
\end{equation}
rather than allowing any real $s>\kappa+d/2$. This restriction comes from an application of Lemma~\ref{lde} in the proof of
Theorem~\ref{error_bound}.

As in Section~\ref{Sob}, we denote by $\|\cdot\|_{H^s(\cM)}$ the norm of $H^s(\cM)$ associated with a fixed finite smooth atlas
$\hat\cA=\{(\hat U_k,\hat\varphi_k)\}_{k=1}^{\hat n}$ and a smooth
partition of unity $\hat\Gamma=\{\hat\gamma_k\}_{k=1}^{\hat n}$ subordinate to it.

We assume that the sets $X_\ell$ of the system $\cX$ satisfy
\begin{equation}\label{XA}
X_\ell=X\cap U_\ell, \quad \ell =1,\ldots,m,
\end{equation}
for a finite smooth atlas  $\cA=\{(U_\ell,\varphi_\ell)\}_{\ell=1}^m$ of $\cM$, such that:

\renewcommand{\labelenumi}{(A\arabic{enumi})}
\newcommand{\Aref}[1]{(A\ref{#1})}

\begin{enumerate}


\item\label{A1} $\cA$ is $c$-admissible with respect to $\hat\cA$ and 
$\hat\Gamma$, for some $c=c_\cA>0$. 

\item\label{A2} Each $U_\ell^\varphi:= \varphi_\ell(U_\ell)$ is bounded and star-shaped with respect to a ball 
$B_\ell\subset\RR^d$. Thus, the following quantity is finite:
$$
\chi_\cA:=\max_{\ell=1,\ldots,n}\chi(U_\ell^\varphi),$$
where $\chi(\Omega)$ denotes the chunkiness parameter of $\Omega$, as defined in Section~\ref{extc}.

\item\label{A3} 
%
For all $\ell$ and $p$ such that 
$U_p\cap U_\ell\ne\emptyset$, the set
$\varphi_\ell(X_p\cap X_\ell)$ is an $(s+\kappa)$-determining set. 
In other words, $\lambda_{s+\kappa}(\cA,X)<\infty$, where 
\begin{align*}
\lambda_{r}(\cA,X):=
\max_{1\le\ell\le m}\max_{i\in I_\ell}
\lambda_{r}(U_{\ell}^\varphi,\varphi_\ell(X_i\cap X_\ell)),
\quad I_\ell:=\{i: U_i\cap U_\ell\ne\emptyset\},\quad r\in\ZZ_+.
\end{align*}


\end{enumerate}
\renewcommand{\labelenumi}{\arabic{enumi}.}
\newcommand{\chain}{A\ref{A3}$'$}
Condition \Aref{A3} may be weakened, see Remark~\ref{chains} below. 

Note that these conditions can be taken into account when selecting the sets $X_\ell$. For example,
we may choose $U_\ell=\hat\varphi_k^{-1}(B_\ell)$ for suitable balls $B_\ell\subset \hat\varphi_k(\hat U_k^c)$ and
$1\le k\le \hat n$, 
such that $U_\ell$
and $U_p$ overlap significantly if $U_\ell\cap U_p\ne \emptyset$,
and define the subsets $X_\ell$ of $X$ as $X_\ell=X\cap U_\ell$. If we first select the system $\cX$, then we need to make
sure that the intersections  $X_\ell\cap X_p$ are sufficiently big when non-empty to ensure the existence of a suitable atlas
$\cA$. In this case Condition (\chain) of Remark~\ref{chains}   is significantly easier to fulfill than \Aref{A3}.



In addition to the parameters $c_\cA>0$, $\chi_\cA<\infty$ and $\lambda_{s+\kappa}(\cA,X)<\infty$ of the assumptions
\Aref{A1}--\Aref{A3}, we now define several further quantities needed in the estimate. Since by \eqref{XA} the sets $X_\ell$ are 
determined by $X$ and $\cA$, we do not indicate in the notation when these quantities depend on $\{X_\ell\}_{\ell=1}^m$.
\begin{itemize}
\item
As the main parameter, against which the error will be measured, we will use
$$
h_\cA=\max_{1\le \ell\le m}h_\ell,$$
where $h_\ell$ is the diameter of the smallest open ball in $\RR^d$ containing $U_\ell^\varphi$.

\item
Furthermore, we set
$$
\nu_{X,\cA}:=\max_{1\le \ell\le m}n_\ell,$$
where $n_\ell=\# X_\ell$ as in Section~\ref{method}.

\item
We denote by $\delta_\ell$ 
the \emph{separation distance} of the set $X_\ell$ in $U_\ell$, 
that is, the largest $\delta>0$ such that 
the open balls of radius $\delta$ centered at $\varphi_\ell(x_j)$ for all $j\in J_\ell$ 
are pairwise disjoint and contained in $U_\ell^\varphi$, 
\begin{equation}\label{sep}
\delta_\ell:=\min\big\{\!\dist(X_\ell^\varphi,\partial U_\ell^\varphi),\,\tfrac12\min_{j\in J_\ell}
\dist\big(\varphi_\ell(x_j),X_\ell^\varphi\setminus\{\varphi_\ell(x_j)\}\big)\big\},\; \ell=1,\ldots,n,
\end{equation}
where $X_\ell^\varphi:= \varphi_\ell(X_\ell)$.

\item
To measure the \emph{quasi-uniformity} of $\cA$ and $X$, we define
$$
q_{\cA}:=\max_{1\le i,j\le m}\frac{h_i}{h_j}=\max_{1\le \ell\le m}\frac{h_\cA}{h_\ell},\qquad 
q_{X,\cA}:=\max_{1\le \ell\le m}\frac{h_\cA}{2n_\ell^{1/d}\delta_\ell}.$$
Note that 
\begin{equation}\label{qA}
q_{\cA}< q_{X,\cA}
\end{equation}
since $n_\ell\delta_\ell^d<\big(\frac{h_\ell}{2}\big)^d$ for all $\ell$, which is clear from the comparison of the
volume of the union of the balls in the definition of $\delta_\ell$ with the volume of a ball of diameter $h_\ell$ containing
$U_\ell^\varphi$. 

\item
It follows from \Aref{A2} that the domains $U_\ell^\varphi$ satisfy the Lipschitz condition. For each $\ell$, we
choose a Lipschitz cover $\cL$ of $\partial U_\ell^\varphi$ and consider $P_\ell=P_\cL$, $M_\ell=M_\cL$, 
$\tau_\ell=\tau_\cL$, as defined in Section~\ref{extc}. Let
$$
P_\cA:=\max_{\ell=1,\ldots,n}P_\ell,\qquad
M_\cA:=\max_{\ell=1,\ldots,n}M_\ell,\qquad
\tau_\cA:=\min_{\ell=1,\ldots,n}\tau_\ell,$$
and
$$
\cE_r(\cA):=\max_{\ell=1,\ldots,n}\cE_r(U_\ell^\varphi),\quad r\in\ZZ_+.$$

%

\item
Finally, we choose a smooth partition of unity $\Gamma=\{\gamma_\ell\}_{\ell=1}^n$ subordinate to $\cA$  such that 
\begin{equation}\label{pum}
\gamma_\ell\in C^\infty(\cM),\quad\gamma_\ell\ge0,\quad 
\supp \gamma_\ell\subset U_\ell,\quad
\sum_{\ell=1}^n\gamma_\ell=1,
\end{equation}
and set
\begin{equation}\label{eta}
\eta_r(\cA):=\max_{1\le\ell\le m}\max_{|\alpha|\le r}\|\partial^\alpha g_\ell\|_{L^\infty(\RR^d)},\qquad r\in\ZZ_+,
\end{equation}
where $g_\ell:\RR^d\to\RR$ is defined by
$$
g_\ell(y):= 
\begin{cases}
(\gamma_\ell\circ\varphi_\ell^{-1})(y_\ell+h_\ell y), & \text{if }y_\ell+h_\ell y\in U_{\ell}^\varphi,\\
0, &\text{otherwise,}
\end{cases}$$
with $y_\ell\in \RR^d$ being the center of the ball $B_\ell$ in \Aref{A2}.

\end{itemize}

Although a subordinate partition of unity $\Gamma$ always exists and corresponding $\eta_r(\cA)$  is finite, we provide an 
example of how a suitable $\Gamma$ may be constructed and $\eta_r(\cA)$ estimated
under an easy to check condition on $\cA$.

\begin{example}\label{pume}\rm
According to \Aref{A2}, each $U_{\ell}^\varphi$ contains a ball $B_\ell$ with radius
$\rho_\ell$ and center $y_\ell$, such that $h_\ell\le \chi_\cA \rho_\ell$. We assume that
there exists a positive $\alpha<1$, such that
\begin{equation}\label{alpha}
\text{for  each $x\in\cM$ there is an $\ell$ with $x\in U_\ell$ and
$\|\varphi_\ell(x)-y_\ell\|_2\le \alpha\rho_\ell$.}
\end{equation}
Let $\Psi:\RR^d\to\RR$ be an infinitely differentiable bump function supported on the unit ball of $\RR^d$, 
for example $\Psi(y)=\psi(\|y\|_2^2)$ for some $\psi\in C^\infty[0,\infty)$, with $\psi(0)=1$, $\psi^{(k)}(0)=0$, 
$k\in\NN$, $\psi(t)>0$ and $\psi'(t)\le0$ for $0\le t<1$, and $\psi(t)=0$ for $t\ge1$. 
We set
$$
\bar\gamma_\ell(x):= 
\begin{cases}
\Psi\big(\tfrac{\varphi_\ell(x)-y_\ell}{\rho_\ell} \big), & x\in U_{\ell},\\
0, & x\in\cM\setminus U_\ell,
\end{cases}$$
and
$$
\gamma_\ell(x)=\tfrac{\bar\gamma_\ell(x)}{\bar\gamma(x)},\quad \ell=1,\ldots,m,\qquad\text{with}\quad
\bar\gamma(x):=\sum_{i=1}^m \bar\gamma_i(x).$$
Then
$$
\bar\gamma(x)\ge \psi(\alpha^2)>0,\quad x\in\cM,$$
each function $g_\ell$ is supported in the ball of radius $\rho_\ell/h_\ell$ centered at the origin, 
and for $y$ in this ball,
%
$$
g_\ell(y)=\frac{\Psi(h_\ell y/\rho_\ell)}{\bar\gamma(\varphi_\ell^{-1}(y_\ell+h_\ell y))}.$$
Here
\begin{align*}
\bar\gamma(\varphi_\ell^{-1}(y_\ell+h_\ell y))
&=\sum_{i\in I_\ell(y)}\Psi\big(\tfrac{(\varphi_i\circ \varphi_\ell^{-1})(y_\ell+h_\ell y)-y_i}{\rho_i}\big),\\
I_\ell(y)&:=\{i: \varphi_\ell^{-1}(y_\ell+h_\ell y)\in U_i\}\subset\{i: U_i\cap U_\ell\ne\emptyset\}.
\end{align*}
Then by the Leibniz and the chain rules it is easy to see that $\eta_r(\cA)$ can be bounded above in terms of
(the upper bounds for)
$$
\chi_\cA,\quad  \mu_\cA,\quad q_{\cA}\quad\text{and}\quad 1/\psi(\alpha^2),$$
as well as the constant 
$$
\max_{i,j=1,\ldots,\hat n}\max_{|\alpha|\le r}\|\partial^\alpha (\hat\varphi_i\circ \hat\varphi_j^{-1})\|_{L^\infty(\hat\varphi_j(\hat U_j))}$$
that depends only on the atlas $\hat\cA$ that defines \eqref{Snorm}.
Note that more general domains  e.g.\ unions of ellipsoids 
may be used instead of balls as supports of the partition of unity functions $\gamma_\ell$ 
if we want to estimate the smallest possible constant $\eta_r(\cA)$ 
for a given choice of  $U_\ell$ and $X_\ell=X\cap U_\ell$ and given $r$. The size of  $\eta_r(\cA)$ for $r=s+\kappa$ 
influences the constants in the error bounds below.
\end{example}

We stress that in contrast to the partition of unity methods \cite{BabuskaMelenk97,GriebelSchweitzer02,LSH17}, 
$\Gamma$ is not used in the computation of the discrete solution $\hu\approx u|_X$, and we only need  
to choose a partitions of unity  in order to prove the error bounds.

\begin{theorem}\label{error_bound}
Let $L$ be an elliptic differential operator  of order $2\kappa$ with smooth coefficients  
and trivial null space, and let $K$ be a reproducing kernel for a Sobolev space $H^s(\cM)$ with an integer
$s$ satisfying $s>2\kappa+d/2$,
such that $L$  is positive and self-adjoint with respect to the inner product defined by $K$. 
Assume that a set of nodes $X$ and an atlas $\cA=\{(U_\ell,\varphi_\ell)\}_{\ell=1}^m$ satisfy 
\Aref{A1}--\Aref{A3}. For any $f\in H^{s-\kappa}(\cM)$, let $u\in H^{s+\kappa}(\cM)$ be the 
solution of $Lu=f$, and let $\hat u$ be the discrete approximate solution  determined by \eqref{leastsq}. 
Then there exists a function $\tilde u\in H^{s+2\kappa}(\cM)$ such that $\tilde u|_X=\hu$ and for all
$0\le r<s-\kappa$,
\begin{equation}\label{Hb}
\|u-\tilde u\|_{H^{2\kappa+r}(\cM)}\le
C_1h_\cA^{s-2\kappa-d-r}\|u\|_{H^s(\cM)}
+C_2h_\cA^{s-\kappa-d/2-r}\|u\|_{H^{s+\kappa}(\cM)},
\end{equation}
where $C_1,C_2<\infty$ are independent of $f,u$ and $h_\cA$.
If $\mu_\cA\le\mu$, $\lambda_{s+\kappa}(\cA,X)\le\lambda$, $\chi_\cA\le\chi$, $\nu_{X,\cA}\le\nu$,
$q_{X,\cA}\le q$, $\eta_{s+\kappa}(\cA)\le\eta$, $P_\cA\le P$, $M_\cA\le M$, $\tau_\cA\ge\tau$, $c_\cA\ge c$, 
for some positive real $\mu,\lambda,\chi,\nu,q,\eta,P,M,\tau,c$,
then \eqref{Hb} holds with $C_1,C_2$
depending only on $\mu,\lambda,\chi,\nu,q,\eta,P,M,\tau,c$, in addition to $\cM,L,K,\hat\cA$ and $\hat\Gamma$.
Moreover, 
\begin{equation}\label{infb}
\|u|_{X}-\hat u\|_\infty\le\|u-\tilde u\|_{C(\cM)}\le C_1h_\cA^{s-2\kappa-d-r_0}\|u\|_{H^s(\cM)}
+C_2h_\cA^{s-\kappa-d/2-r_0}\|u\|_{H^{s+\kappa}(\cM)},
\end{equation}
where $r_0=\max\{0,\lfloor \frac{d}{2}\rfloor-2\kappa+1\}$, with the same properties of  $C_1,C_2$.
\end{theorem}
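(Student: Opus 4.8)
\medskip
\noindent\textbf{Proof strategy.} The plan is to realise $\hat u$ as the sample $\tilde u|_X$ of a global function $\tilde u$ obtained by gluing local kernel interpolants with a partition of unity, to split $u-\tilde u$ into a quasi\hyp{}interpolation error and a discrete error, and to control the discrete error through the least squares optimality of $\hat u$ together with the elliptic regularity estimate \eqref{reg}. For the construction I would take, for each $\ell$, the kernel ($K$\nobreakdash-) interpolant $\sigma_\ell$ of the data $\hat u|_{J_\ell}$ on $X_\ell$ and set $\tilde u:=\sum_{\ell=1}^m\gamma_\ell\sigma_\ell$ with a partition of unity $\Gamma=\{\gamma_\ell\}$ as in \eqref{pum}. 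Since $\supp\gamma_\ell\subset U_\ell$ and $X\cap U_\ell=X_\ell$, the condition $\gamma_\ell(x_j)\ne0$ forces $j\in J_\ell$, whence $\sigma_\ell(x_j)=\hat u_j$ and $\tilde u|_X=\hat u$; and because $s>2\kappa+d/2$, Lemma~\ref{smkerL} gives $K(\cdot,x)\in H^{s+2\kappa}(\cM)$ for all $x$, so each $\sigma_\ell$, hence $\tilde u$, lies in $H^{s+2\kappa}(\cM)$. Writing $\sigma_\ell^\ast$ for the interpolant of $u|_{J_\ell}$, $u^\ast:=\sum_\ell\gamma_\ell\sigma_\ell^\ast\in H^{s+2\kappa}(\cM)$ so that $u^\ast|_X=u|_X$, and $e:=\hat u-u|_X$, one has $\tilde u-u^\ast=\sum_\ell\gamma_\ell\rho_\ell$ where $\rho_\ell:=\sigma_\ell-\sigma_\ell^\ast$ is the interpolant of $e|_{J_\ell}$ and, by \eqref{difm}--\eqref{Wl}, $L\rho_\ell|_{X_\ell}=W_\ell\,e|_{J_\ell}=:\omega_\ell$. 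Thus $u-\tilde u=(u-u^\ast)-\sum_\ell\gamma_\ell\rho_\ell$.

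For the quasi\hyp{}interpolation part I would note that on each patch $u-\sigma_\ell^\ast$ vanishes on $X_\ell$, which is an $(s+\kappa)$\nobreakdash-determining subset of the domain $U_\ell^\varphi$ that is star-shaped with respect to a ball (\Aref{A2}--\Aref{A3} with $p=\ell$), and apply the sampling inequality \eqref{sam2} of Lemma~\ref{sampling} in the chart $\varphi_\ell$ (the data term drops out). The seminorm $|\sigma_\ell^{\ast}\circ\varphi_\ell^{-1}|_{H^{s+\kappa}(U_\ell^\varphi)}$ is then estimated by an inverse inequality for kernel sums together with the minimum\hyp{}norm property \eqref{minnorm}, Lemma~\ref{SobKm}, and the bound $\cE_s(U_\ell^\varphi)\le C\,h_\ell^{-d/2}$ of Lemma~\ref{extce}; squaring, summing over $\ell$, and invoking Lemma~\ref{admis} (with $\cA$ $c$\nobreakdash-admissible, \Aref{A1}) together with $h_\ell\le h_\cA$ and $h_\cA/h_\ell\le q_{X,\cA}$ gives
$$\|u-u^\ast\|_{H^{2\kappa+r}(\cM)}\le C_1\,h_\cA^{\,s-2\kappa-d-r}\|u\|_{H^s(\cM)}+C_2\,h_\cA^{\,s-\kappa-d/2-r}\|u\|_{H^{s+\kappa}(\cM)},$$
so this contribution already obeys \eqref{Hb}.

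For the discrete part --- the crux --- I would first use the least squares property \eqref{leastsq} and the triangle inequality to get $\|W\!M e\|_2\le 2\,\|W\!M(u|_X)-F\|_2$, whose $\ell$\nobreakdash-th block is the local numerical differentiation error $W_\ell u|_{X_\ell}-Lu|_{X_\ell}$; estimate \eqref{lde3} of Lemma~\ref{lde} (applicable since an $(s+\kappa)$\nobreakdash-determining set is $s$\nobreakdash-determining and $u\in H^{s+\kappa}(\cM)\subset H^s(\cM)$), Lemma~\ref{extce}, and Lemma~\ref{admis} then yield $\|W\!M e\|_2\le C\,h_\cA^{\,s-2\kappa-d}\|u\|_{H^s(\cM)}$, with $\sum_\ell\|\omega_\ell\|_2^2=\|W\!M e\|_2^2$. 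For the stability, observe that $L\rho_\ell$ is the $K_L$\nobreakdash-interpolant on $X_\ell$ of $\omega_\ell$: combining the isometry \eqref{tK} and the native-space description of Lemma~\ref{KL} with the standard lower bound $\lambda_{\min}(K_L|_{X_\ell})\ge c\,\delta_{X,\cA}^{\,2(s-\kappa)-d}$ for the local interpolation matrix (meaningful since $s>\kappa+d/2$), with Lemma~\ref{SobKm}, and with a further application of \eqref{sam2} to $L\rho_\ell$ in the chart $\varphi_\ell$ (legitimate by \Aref{A3}), one bounds $\rho_\ell$ and $L\rho_\ell$ on $\supp(\gamma_\ell\circ\varphi_\ell^{-1})\Subset U_\ell^\varphi$ --- via interior elliptic regularity for $\rho_\ell$ --- by fixed powers of $h_\cA$ (comparable to $\delta_{X,\cA}$ through $q_{X,\cA}$) times $\|\omega_\ell\|_2$. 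Applying $L$ to $\sum_\ell\gamma_\ell\rho_\ell$ and expanding by the Leibniz rule --- with the chart derivatives $\partial^\alpha(\gamma_\ell\circ\varphi_\ell^{-1})$ controlled by $h_\ell^{-|\alpha|}\eta_{s+\kappa}(\cA)$ from \eqref{eta} --- the negative powers of $h_\cA$ produced by differentiating $\gamma_\ell$ are absorbed by the positive powers gained from the smoothness of the interpolants, so that $\bigl\|L\sum_\ell\gamma_\ell\rho_\ell\bigr\|_{H^r(\cM)}\le C\,h_\cA^{\,s-2\kappa-d-r}\|u\|_{H^s(\cM)}$; then \eqref{reg} in the form $\|v\|_{H^{2\kappa+r}(\cM)}\le B\|Lv\|_{H^r(\cM)}$ gives the same bound for $\|\sum_\ell\gamma_\ell\rho_\ell\|_{H^{2\kappa+r}(\cM)}$. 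Together with the previous step this proves \eqref{Hb}, and since every constant is assembled from Lemmas~\ref{admis}, \ref{lde}, \ref{extce}, \ref{sampling}, \ref{SobKm}, \ref{KL}, the separation-distance eigenvalue bound, and the gluing, they depend only on $\mu,\lambda,c,\chi,\nu,q,\eta,N,M,\tau$. The hard part will be exactly this stability step: making all the powers of $h_\cA$ match up requires the sharp lower bound for the smallest eigenvalue of $K_L|_{X_\ell}$ in terms of $\delta_{X,\cA}$, the precise cancellation of $h_\cA$\nobreakdash-powers in the partition-of-unity Leibniz expansion against the interpolation estimates, and the change between kernel and Sobolev norms furnished by \eqref{tK} and Lemma~\ref{KL}.

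Finally, the pointwise bound \eqref{infb} follows from \eqref{Hb} by Sobolev embedding: taking $r=r_0:=\max\{0,\lfloor d/2\rfloor-2\kappa+1\}$, the least nonnegative integer with $2\kappa+r_0>d/2$, one checks $r_0<s-\kappa$ under \eqref{As}, so \eqref{Hb} applies with $r=r_0$, and since $H^{2\kappa+r_0}(\cM)\hookrightarrow C(\cM)$ and $\tilde u|_X=\hat u$,
$$\|u|_X-\hat u\|_\infty=\|(u-\tilde u)|_X\|_\infty\le C\,\|u-\tilde u\|_{H^{2\kappa+r_0}(\cM)}.$$
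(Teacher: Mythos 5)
Your construction of $\tilde u$ is exactly the paper's: the $K$-interpolant $\tilde u_\ell$ of $\hat u|_{J_\ell}$ on each $X_\ell$, glued with a partition of unity, with the observation $\tilde u|_X=\hat u$ and the $H^{s+2\kappa}$ membership via Lemma~\ref{smkerL}. The entry point, bounding the residual by least squares optimality and \eqref{lde3}, is also the same, and the final step (Sobolev embedding with $r_0$) matches. Where you depart is in the error analysis proper, and that departure both hides gaps and requires tools the paper deliberately avoids.

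The paper never splits $u-\tilde u$ into a quasi-interpolation part $u-u^*$ and a discrete part $\sum_\ell\gamma_\ell\rho_\ell$. Instead it sets $v=L\tilde u-Lu$ and applies the local sampling inequality \eqref{sam2} directly to $v$, using two ingredients: (i) the values $v|_X$, controlled by $\eps_1+\eps_2$ through the ND consistency error \eqref{lde1} applied to $\tilde u$ itself (not to $u$), and (ii) the global norm $\|v\|_{H^{s-\kappa}(\cM)}$, controlled by $\sum_\ell\|L\tilde u_\ell\|_{K_L}^2$ via the partition-of-unity Leibniz argument. Elliptic regularity \eqref{reg} then converts the bound on $v=L(\tilde u-u)$ into the bound \eqref{Hb}. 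This route avoids both tools on which your argument leans and which are not established anywhere in the paper. First, you invoke a lower bound $\lambda_{\min}(K_L|_{X_\ell})\gtrsim\delta_{X,\cA}^{2(s-\kappa)-d}$. Such bounds exist for translation-invariant kernels on $\RR^d$, but here $K_L$ lives on a chart of a manifold and its native-space norm is only equivalent to $H^{s-\kappa}(U_\ell^\varphi)$ up to the extension constant $\cE_{s-\kappa}(U_\ell^\varphi)$; stating and tracking that dependence is not trivial. The paper sidesteps this entirely: it represents the discrete data $v_\ell|_{X_\ell}$ by a sum of disjointly supported bump functions $\psi_\ell$, uses the minimum-norm property \eqref{minnorm} of the $K_L$-interpolant, and Lemma~\ref{SobKm}, which produces exactly the $\delta^{-(s-\kappa)+d/2}$ factor and the $\cE_{s-\kappa}$ factor in one stroke and with constants controlled by the listed parameters. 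Second, your bound for $\|u-u^*\|_{H^{2\kappa+r}}$ requires estimating $|\sigma_\ell^*\circ\varphi_\ell^{-1}|_{H^{s+\kappa}(U_\ell^\varphi)}$ when the minimum-norm property only controls the $K$-norm, i.e.\ $H^s$; you gesture at ``an inverse inequality for kernel sums'' to bridge the $\kappa$ orders, but that inequality, like the eigenvalue bound, is not in the paper and would introduce further $\delta^{-\kappa}$ factors and extension constants that must be reconciled with the target powers. The paper avoids $u^*$ altogether by comparing $L\tilde u_\ell$ to the $K_L$-interpolant of $f$ (its $u_\ell$), for which the minimum-norm property in the $K_L$-native space gives the $H^{s-\kappa}$ control without any inverse inequality.

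Finally, you acknowledge the ``absorption'' of negative powers in the Leibniz expansion as the hard part, but the paper shows precisely where the cancellation comes from: $\tilde u-\tilde u_\ell=\sum_{p\in I_\ell\setminus\{\ell\}}\gamma_p(\tilde u_p-\tilde u_\ell)$ on $U_\ell$, and $\tilde u_p-\tilde u_\ell$ vanishes on $X_p\cap X_\ell$, so the sampling inequality supplies $h_\ell^{s+\kappa-|\beta|}$ gains against the $h_\ell^{-|\alpha-\beta|}$ losses from $\partial^{\alpha-\beta}\gamma_p$; the ``interior elliptic regularity for $\rho_\ell$'' you mention is not needed. In short, the scaffolding is right, but the two keystone estimates you postpone (eigenvalue lower bound with extension-constant dependence, inverse inequality in $H^{s+\kappa}$) are genuine gaps that the paper's decomposition is specifically engineered to circumvent; to make your variant rigorous you would have to supply these in the manifold setting and verify that their constants still depend only on $\mu,\lambda,c,\chi,\nu,q,\eta,N,M,\tau$.
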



\begin{proof}
Recall that we assume that $\mu_\cA\le \mu$,  $\lambda_{s+\kappa}(\cA,X)\le\lambda$, 
$c_\cA\ge c$, $\chi_\cA\le\chi$, $\nu_{X,\cA}\le\nu$,
$q_{X,\cA}\le q$, $\eta_{s+\kappa}(\cA)\le\eta$. Then also $\lambda_{r}(\cA,X)\le\lambda$ for all $r\le s+\kappa$, in particular for $r=s$ and $r=s-\kappa$. 
The constants denoted by $C,C_1,C_2,\ldots$ in
the proof  will be independent of $f,u,h_\cA$ and the Lipschitz cover parameters  $P_\cA,M_\cA,\tau_\cA$. 
They in general depend on $\cM,L,K$, the atlas
$\hat\cA$ and the partition of unity $\hat\Gamma$ that we use to fix the Sobolev norms, as well as on
$\mu,\lambda,\chi,\nu,q,\eta,c$. The constants in \eqref{Hb} and \eqref{infb} will also depend on $P,M,\tau$ via the final
application of Lemma~\ref{extce} to \eqref{errorE}.
We assume without loss of generality that $h_\cA\le1$. 

Since $s>2\kappa+d/2$, we obtain by \eqref{lde3} and \Aref{A1}--\Aref{A3},
\begin{equation}\label{ldep}
\|\Wl u|_{X_\ell} - Lu|_{X_\ell}\|_\infty\le C\cE_{s}(U^\varphi_\ell)h_\cA^{s-2\kappa-d/2}\|u\circ\varphi_\ell^{-1}\|_{H^s(U^\varphi_\ell)},
\quad \ell=1,\ldots,m.
\end{equation}
Since, in view of \eqref{leastsq} and \eqref{ods2},
$$
\|W\!M\hat u-Mf|_X\|_2^2\le\|W\!Mu|_X-Mf|_X\|_2^2
=\sum_{\ell=1}^{m}\|\Wl u|_{X_\ell} - Lu|_{X_\ell}\|_2^2,$$
we obtain by \eqref{ldep} and Lemma~\ref{admis}, 
\begin{equation}\label{eps}
\eps_1:=\|W\!M\hat u-Mf|_X\|_2\le C\cE_{s}(\cA)h_\cA^{s-2\kappa-d/2}\|u\|_{H^s(\cM)}.
\end{equation}

We will construct a function $\tilde u\in H^{s+\kappa}(\cM)$  such that 
$\tilde u|_{X}=\hat u$, and estimate its distance to $u$ in Sobolev spaces of lower order as in \eqref{Hb}. We define local interpolants $\tilde u_\ell$
on the sets $X_\ell$ and then combine them together with the help of a partition of unity. 

For each $\ell=1,\ldots,m$, let
$$
\tilde u_\ell(x)=\sum_{j\in J_\ell}c_j K(x,x_j),$$
where the coefficients $c_j$ are uniquely determined by the interpolation conditions
%
%
%
\begin{equation}\label{uellint}
\tilde u_\ell|_{X_\ell}=\hat u|_{J_\ell}.
\end{equation}
By \eqref{Wl},
$L\tilde u_\ell|_{X_\ell}=W_\ell \tilde u_\ell|_{X_\ell}$. Hence
\begin{equation}\label{giell} 
L \tilde u_\ell|_{X_\ell}= W_\ell \hat u|_{J_\ell}.
\end{equation}
Moreover,  by Lemma~\ref{smkerL}, $\tilde u_\ell\in H^{s+2\kappa}(\cM)$.
By \eqref{reg} and Lemmas~\ref{SobK} and \ref{KL},
\begin{equation}\label{uelln} 
\|\tilde u_\ell\|_{H^{s+\kappa}(\cM)}\le C\|L\tilde u_\ell\|_{K_L}.
\end{equation}
It follows from  \eqref{giell} and \eqref{ods2}
 that
\begin{equation}\label{eps4}
\sum_{\ell=1}^{m}\|L \tilde u_\ell|_{X_\ell}-Lu|_{X_\ell}\|_2^2=
\sum_{\ell=1}^{m}\|W_\ell \hat u|_{J_\ell}-Lu|_{X_\ell}\|_2^2=\|W\!M\hat u-Mf|_X\|_2^2=\eps_1^2.
\end{equation}

Our next goal is to estimate the  $K_L$-norm of $L\tilde u_\ell$. Note that due to \eqref{srestr}, 
$\|L\tilde u_\ell\|_{K_L}=\|L\tilde u_\ell\|_{K_L,U_\ell}$ since $X_\ell\subset U_\ell$. Let 
$$
u_\ell\in\spann\{K(\cdot,x_j):\,j\in J_\ell\}$$
be the kernel sum determined by the interpolation conditions
$$
L u_\ell|_{X_\ell}=L u|_{X_\ell}.$$ 
Then $L u_\ell$ is the $K_L$-kernel interpolant of $f=Lu$. 
By Lemma~\ref{KL}, $K_L$ is a reproducing kernel for $H^{s-\kappa}(\cM)$,
and by Lemma~\ref{SobKm} and \Aref{A2}, $H_{K_L}(U_\ell)=H^{s-\kappa}(U_\ell^\varphi)$. Hence,  by \eqref{srestr} and the minimum norm property
\eqref{minnorm}, since $X_\ell\subset U_\ell$, 
$$
\|L u_\ell\|_{K_L}=\|L u_\ell\|_{K_L,{U_\ell}}\le \|f\|_{K_L,{U_\ell}},$$
and by Lemma~\ref{SobKm},
$$
\|f\|_{K_L,{U_\ell}}\le C\cE_{s-\kappa}(U^\varphi_\ell)\|f \circ\varphi_\ell^{-1}\|_{H^{s-\kappa}(U_\ell^\varphi)}.$$
It follows by Lemma~\ref{admis} that
\begin{equation}\label{b1}
\sum_{\ell=1}^m\|L u_\ell\|_{K_L}^2\le C\cE^2_{s-\kappa}(\cA)\|f\|_{H^{s-\kappa}(\cM)}^2.
\end{equation}
Consider the functions $v_\ell=L\tilde u_\ell - Lu_\ell$, $\ell=1,\ldots,m$, that satisfy
$$
\sum_{\ell=1}^{m}\|v_\ell|_{X_\ell}\|_2^2\le\eps_1^2,$$
thanks to \eqref{eps4}. Let
$$
\psi_\ell=\sum_{j\in J_\ell}v_\ell(x_j)\,\Psi_{j,\delta_\ell}^\ell,
\qquad \Psi_{j,\delta}^\ell(x):=
\begin{cases}\Psi\big(\tfrac{\varphi_\ell(x)-\varphi_\ell(x_j)}{\delta}\big),& x\in U_\ell,\\
0, & x\in\cM\setminus U_\ell,
\end{cases}$$
where $\Psi(y)=\psi(\|y\|_2^2)$ is a radially symmetric bump function in $C^\infty(\RR^d)$ supported on the unit ball, 
as in Example~\ref{pume}, 
and $\delta_\ell$ is defined in \eqref{sep}. 
Then the supports of the functions
$\Psi_{j,\delta_\ell}^\ell$, $j\in  J_\ell$, are pairwise disjoint and hence
$$
\psi_\ell|_{X_\ell}=v_\ell|_{X_\ell}$$
and 
$$
|\psi_\ell\circ \varphi_\ell^{-1}|_{H^r(U_\ell^\varphi)}=\delta_\ell^{-r+d/2}|\Psi|_{H^r(\RR^d)}
\|v_\ell|_{X_\ell}\|_2,\qquad r\in\ZZ_+.$$
Since $\delta_\ell^{-1}<2\nu^{1/d}qh_\cA^{-1}$ and $\delta_\ell<h_\cA\le1$, we obtain
\begin{align*}
\sum_{\ell=1}^{m}\|\psi_\ell\circ \varphi_\ell^{-1}\|_{H^{s-\kappa}(U_\ell^\varphi)}^2
\le \sum_{r=0}^{s-\kappa}|\Psi|_{H^{r}(\RR^d)}^2\sum_{\ell=1}^{m}\delta_\ell^{-2r+d}\|v_\ell|_{X_\ell}\|_2^2
\le C\eps_1^2\,\|\Psi\|_{H^{s-\kappa}(\RR^d)}^2\,h_\cA^{-2(s-\kappa)+d}. 
\end{align*}
By \eqref{srestr}, the minimum norm property \eqref{minnorm} of $v_\ell$ as the $K_L$-kernel interpolant of 
$\psi_\ell\in H_{K_L}(U_\ell)$,  and by Lemma~\ref{SobKm},
$$
\|v_\ell\|_{K_L}=\|v_\ell\|_{K_L,U_\ell}\le \|\psi_\ell\|_{K_L,U_\ell}
\le C\cE_{s-\kappa}(U^\varphi_\ell)\|\psi_\ell\circ \varphi_\ell^{-1}\|_{H^{s-\kappa}(U_\ell^\varphi)},$$
and it follows from the above that
$$ 
\sum_{\ell=1}^{m}\|v_\ell\|_{K_L}^2
\le C\cE^2_{s-\kappa}(\cA)\eps_1^2h_\cA^{-2(s-\kappa)+d}.
$$ 
Combining this with \eqref{b1}, we obtain for $L\tilde u_\ell=Lu_\ell+v_\ell,$
\begin{equation}\label{b3}
\Big(\sum_{\ell=1}^{m}\|L \tilde u_\ell\|_{K_L}^2\Big)^{1/2}
\le \cE_{s-\kappa}(\cA)\big(C_1\|f\|_{H^{s-\kappa}(\cM)}+C_2\eps_1h_\cA^{-s+\kappa+d/2} \big).
\end{equation}



We set
$$
\tilde u :=\sum_{\ell=1}^m \gamma_\ell \tilde u_\ell,$$
where $\{\gamma_\ell\}_{\ell=1}^m$ is the smooth partition of unity \eqref{pum} subordinate to $\cA$.
Note that $\tilde u\in H^{s+2\kappa}(\cM)$, which follows from the same property of $\tilde u_\ell$.
It follows from \eqref{uellint} and \eqref{pum} that
\begin{equation}\label{uint}
\tilde u|_{X} = \hat u.
\end{equation}

We now use \eqref{b3} and the properties of the partition of unity
in order to obtain an estimate for $\|L\tilde u\|_{H^{s-\kappa}(\cM)}$. 
By Lemma~\ref{admis},
$$
\|L\tilde u\|_{H^{s-\kappa}(\cM)}^2
\le C\sum_{\ell=1}^m\|L\tilde u\circ \varphi_\ell^{-1}\|_{H^{s-\kappa}(U_\ell^\varphi)}^2.$$
For each $\ell=1,\ldots,m$,
$$
\|L\tilde u\circ \varphi_\ell^{-1}\|_{H^{s-\kappa}(U_\ell^\varphi)}
\le \|L\tilde u_\ell\circ \varphi_\ell^{-1}\|_{H^{s-\kappa}(U_\ell^\varphi)}+
\|L(\tilde u-\tilde u_\ell)\circ \varphi_\ell^{-1}\|_{H^{s-\kappa}(U_\ell^\varphi)}.$$
By Lemma~\ref{SobKm} and \eqref{srestr},
$$
\|L\tilde u_\ell\circ \varphi_\ell^{-1}\|_{H^{s-\kappa}(U_\ell^\varphi)}\le C \|L\tilde u_\ell\|_{K_L},$$
and we will be able to estimate the terms of this type by applying \eqref{b3}.
It follows from \eqref{aalpha} and the Leibniz rule that
$$
\|L(\tilde u-\tilde u_\ell)\circ \varphi_\ell^{-1}\|_{H^{s-\kappa}(U_\ell^\varphi)}
\le C \|(\tilde u-\tilde u_\ell)\circ \varphi_\ell^{-1}\|_{H^{s+\kappa}(U_\ell^\varphi)}.$$
By the definition of $\tilde u$,
$$
\tilde u-\tilde u_\ell
=\sum_{p\in I_\ell\setminus\{\ell\}}
\gamma_p(\tilde u_p -\tilde u_{\ell})
\quad\text{in $U_\ell$},\qquad I_\ell=\{i: U_i\cap U_\ell\ne\emptyset\}.$$
Applying the Leibniz rule again, we obtain
for  $|\alpha|\le s+\kappa$,
$$
\partial^\alpha\big( (\tilde u - \tilde u_{\ell})\circ\varphi_\ell^{-1}\big)
=\sum_{p\in I_\ell\setminus\{\ell\}}
\sum_{\beta\le\alpha}{\alpha\choose\beta}
\partial^{\alpha-\beta}(\gamma_p\circ \varphi_\ell^{-1})
\,\partial^{\beta}\big((\tilde u_p -\tilde u_{\ell})\circ \varphi_\ell^{-1}\big)\quad\text{in $U_\ell^\varphi$}.$$
By \eqref{eta},
$$
\|\partial^{\alpha-\beta}(\gamma_p\circ \varphi_\ell^{-1})\|_{L^\infty(U_\ell^\varphi)}
=h_\ell^{|\beta-\alpha|}\|\partial^{\alpha-\beta}g_\ell\|_{L^\infty(\RR^d)}
\le h_\ell^{|\beta-\alpha|}\eta_{s+\kappa}(\cA)\le\eta h_\ell^{|\beta-\alpha|}.$$
Hence
$$
\|\partial^\alpha\big( (\tilde u - \tilde u_{\ell})\circ\varphi_\ell^{-1}\big)\|_{L^2(U_\ell^\varphi)}
\le C\sum_{p\in I_\ell\setminus\{\ell\}}
\sum_{\beta\le\alpha}{\alpha\choose\beta} h_\ell^{|\beta-\alpha|}
\|\partial^{\beta}\big((\tilde u_p-\tilde u_\ell)\circ \varphi_\ell^{-1}\big)\|_{L^2(U_\ell^\varphi)}.$$
Let $p\in I_\ell\setminus\{\ell\}$. By \eqref{uellint},
$$
\tilde u_p|_{X_p\cap X_\ell}=\hu|_{J_p\cap J_\ell}= \tilde u_\ell|_{X_p\cap X_\ell}.$$
Hence, it follows from \eqref{sam2} and \Aref{A3} that for $|\beta|<s+\kappa$,
\begin{equation}\label{useA3}
\|\partial^{\beta}\big((\tilde u_p-\tilde u_\ell)\circ \varphi_\ell^{-1}\big)\|_{L^2(U_\ell^\varphi)}
\le C h_\ell^{s+\kappa-|\beta|}
|(\tilde u_p-\tilde u_\ell)\circ\varphi_\ell^{-1}|_{H^{s+\kappa}(U_\ell^\varphi)}.
\end{equation}
%
%
%
By Lemma~\ref{admis},
$$
|(\tilde u_p-\tilde u_\ell)\circ\varphi_\ell^{-1}|_{H^{s+\kappa}(U_\ell^\varphi)}
\le C\|\tilde u_p-\tilde u_\ell\|_{H^{s+\kappa}(\cM)}.$$
By \eqref{uelln}, 
$$
\|\tilde u_p-\tilde u_\ell\|_{H^{s+\kappa}(\cM)}\le 
C(\|L\tilde u_p\|_{K_L}+\|L\tilde u_\ell\|_{K_L}).$$
By combining these estimates, since $h_\cA\le 1$
and hence $h_\ell^{s+\kappa-|\alpha|}\le1$, we obtain 
\begin{equation}\label{utmutl}
\|(\tilde u-\tilde u_\ell)\circ \varphi_\ell^{-1}\|_{H^{s+\kappa}(U_\ell^\varphi)}
\le C\sum_{p\in I_\ell}\|L\tilde u_p\|_{K_L}.
\end{equation}
This implies
$$
\sum_{\ell=1}^m
\|L(\tilde u-\tilde u_\ell)\circ \varphi_\ell^{-1}\|_{H^{s-\kappa}(U_\ell^\varphi)}^2
\le C 
\sum_{\ell=1}^m\|L\tilde u_\ell\|_{K_L}^2.$$
Summarizing the estimates, we get 
$$
\|L\tilde u\|_{H^{s-\kappa}(\cM)}
\le C
\Big(\sum_{\ell=1}^m \|L\tilde u_\ell\|_{K_L}^2\Big)^{1/2},$$
and hence by \eqref{b3},
\begin{equation}\label{b4}
\|L\tilde u\|_{H^{s-\kappa}(\cM)}
\le C_1\|f\|_{H^{s-\kappa}(\cM)}+C_2\eps_1h_\cA^{-s+\kappa+d/2}.
\end{equation}

Finally, we will estimate Sobolev norms of $L u-L \tilde u$ by applying Lemma~\ref{sampling} locally, which in turn will give us estimates for $u-\tilde u$ thanks to
\eqref{reg}.
Since $X=\cup_{\ell=1}^mX_\ell$, it follows from \eqref{eps4} that
\begin{equation}\label{eps12}
\|L \tilde u|_{X}-L u|_{X}\|_2\le\Big(\sum_{\ell=1}^m\|L \tilde u|_{X_\ell}-L u|_{X_\ell}\|_2^2\Big)^{1/2}
\le \eps_1 + \eps_2,
\end{equation}
where 
$$
\eps_2:=\Big(\sum_{\ell=1}^m\|L \tilde u|_{X_\ell}-L \tilde u_\ell|_{X_\ell}\|_2^2\Big)^{1/2}.$$
By \eqref{giell} and \eqref{uint},
$$
L \tilde u_\ell|_{X_\ell}= \Wl \hat u|_{J_\ell}=\Wl\tilde u|_{X_\ell}.$$
By construction, $\tilde u_\ell$ is the kernel interpolant to $\tilde u$ on $X_\ell$, compare \eqref{uellint} and
\eqref{uint}. Hence by \eqref{lde1},
$$
\|L \tilde u|_{X_\ell}- L \tilde u_\ell|_{X_\ell}\|_\infty
 =\|L \tilde u|_{X_\ell}- \Wl\tilde u|_{X_\ell}\|_\infty\le
Ch_\ell^{s-\kappa-d/2}
|(\tilde u-\tilde u_\ell)\circ\varphi_\ell^{-1}|_{H^{s+\kappa}(U_\ell^\varphi)}.$$
In view of \eqref{utmutl},
\begin{align*}
\eps_2^2
\le \nu \sum_{\ell=1}^m\|L \tilde u|_{X_\ell}-L \tilde u_\ell|_{X_\ell}\|_\infty^2
\le C h_\cA^{2s-2\kappa-d}
\sum_{\ell=1}^m\|L\tilde u_\ell\|_{K_L}^2.
\end{align*}
and by \eqref{b3} we get 
\begin{equation}\label{eps2}
\eps_2\le \cE_{s-\kappa}(\cA)\big(
C_1h_\cA^{s-\kappa-d/2}\|Lu\|_{H^{s-\kappa}(\cM)}+C_2\eps_1\big).
\end{equation}
Let $v=L \tilde u-Lu$. Then $v\in H^{s-\kappa}(\cM)$ and
$$
\eps_3:=\|v\|_{H^{s-\kappa}(\cM)}\le \|L\tilde u\|_{H^{s-\kappa}(\cM)}+\|Lu\|_{H^{s-\kappa}(\cM)}.$$
Hence by \eqref{b4}, 
\begin{equation}\label{eps3}
\eps_3\le \cE_{s-\kappa}(\cA)\big(C_1\|Lu\|_{H^{s-\kappa}(\cM)}+C_2\eps_1h_\cA^{-s+\kappa+d/2}\big).
\end{equation}
By Lemma~\ref{sampling}, for any integer $r$ with $0\le r< s-\kappa$, and each $\ell=1,\ldots,m$,
$$
|v\circ \varphi_\ell^{-1}|_{H^r(U_\ell^\varphi)}
\le Ch_\ell^{-r}
\big(h_\ell^{d/2}\|v|_{X_\ell}\|_\infty
+h_\ell^{s-\kappa}|v\circ \varphi_\ell^{-1}|_{H^{s-\kappa}(U_\ell^\varphi)}\big).$$
Hence by Lemma~\ref{admis},
 \begin{align*}
\|v\|_{H^r(\cM)}^2&\le C\sum_{\ell=1}^m\|v\circ \varphi_\ell^{-1}\|_{H^r(U_\ell^\varphi)}^2\\
&\le C
\Big(\sum_{\ell=1}^m h_\ell^{d-2r}\|v|_{X_\ell}\|_\infty^2
+h_\cA^{2(s-\kappa-r)}\sum_{\ell=1}^m |v\circ \varphi_\ell^{-1}|_{H^{s-\kappa}(U_\ell^\varphi)}^2\Big)\\
&\le C_1h_\cA^{d-2r} \|v|_{X}\|_2^2
+C_2h_\cA^{2(s-\kappa-r)}\|v\|_{H^{s-\kappa}(\cM)}^2.
\end{align*}
By \eqref{eps12} and definitions of $v$ and $\eps_3$, we get
$$
\|Lu-L\tilde u\|_{H^{r}(\cM)}\le C_1h_\cA^{d/2-r} (\eps_1+\eps_2)
+C_2h_\cA^{s-\kappa-r}\eps_3.$$

It follows in view of \eqref{reg} that for any $0\le r< s-\kappa$,
\begin{equation}\label{eps123}
\|u-\tilde u\|_{H^{r+2\kappa}(\cM)}\le
C_1h_\cA^{d/2-r} (\eps_1+\eps_2)
+C_2h_\cA^{s-\kappa-r}\eps_3.
\end{equation}
By  \eqref{eps}, \eqref{eps2} and \eqref{eps3}, 
\begin{align*}
\eps_1+\eps_2&\le 
C_1\cE_{s}(\cA)h_\cA^{s-2\kappa-d/2}\|u\|_{H^s(\cM)}
+C_2\cE_{s-\kappa}(\cA)h_\cA^{s-\kappa-d/2}\|Lu\|_{H^{s-\kappa}(\cM)},\\
\eps_3&\le \cE_{s-\kappa}(\cA)\big(C_1\|Lu\|_{H^{s-\kappa}(\cM)}+C_2\cE_{s}(\cA)h_\cA^{-\kappa}\|u\|_{H^s(\cM)}\big).
\end{align*}
Hence by \eqref{eps123}, 
$$
\|u-\tilde u\|_{H^{r+2\kappa}(\cM)}\le
\cE_{s-\kappa}(\cA)\big(C_1\cE_s(\cA)
h_\cA^{s-2\kappa-r}\|u\|_{H^s(\cM)}
+C_2h_\cA^{s-\kappa-r}\|Lu\|_{H^{s-\kappa}(\cM)}\big).$$
By using \eqref{reg} again, we arrive at
\begin{equation}\label{errorE}
\|u-\tilde u\|_{H^{r+2\kappa}(\cM)}\le
\cE_{s-\kappa}(\cA)
\big(C_1\cE_s(\cA)
h_\cA^{s-2\kappa-r}\|u\|_{H^s(\cM)}
+C_2h_\cA^{s-\kappa-r}\|u\|_{H^{s+\kappa}(\cM)}\big),
\end{equation}
which implies  \eqref{Hb} in view of Lemma~\ref{extce}. 

To show  \eqref{infb} we use \eqref{uint} and Sobolev embedding of $H^{2\kappa+r}(\cM)$ into $C(\cM)$ as soon as $2\kappa+r>d/2$. 
It is easy to see that $r_0$ in \eqref{infb} is the smallest nonnegative integer $r$ satisfying this inequality.
\end{proof}

Note that convergence in the norm of $H^{2\kappa+r}(\cM)$ as $h_\cA\to0$ follows from \eqref{Hb} only when $0\le r<s-2\kappa-d$, thus under 
the  assumption
$$
s>2\kappa+d,$$ 
which is stricter than \eqref{As},
and the discrete convergence follows from \eqref{infb} under the assumption that 
$$
s>\max\{2\kappa,\tfrac{d}{2}+1\}+d.$$
In particular, in the discrete case in order for the power of $h_\cA$ to be positive, we need
$$
s>2\kappa+d+r_0=\max\{2\kappa+d,d+\lfloor \tfrac{d}{2}\rfloor+1\},$$
which is equivalent to $s>\max\{2\kappa,\tfrac{d}{2}+1\}+d$ since $s$ is integer.

For example, for $\kappa=1$ and $d\le 3$ we obtain the Sobolev norm convergence with the order
\begin{equation}\label{Hb3}
\|u-\tilde u\|_{H^{2+r}(\cM)}=\cO(h_\cA^{s-d-2-r})\quad\text{if}\quad s\ge d+3,\quad 0\le r\le s-d-3,
\end{equation}
and the discrete norm convergence 
\begin{equation}\label{infb3}
\|u|_X-\hu\|_{\infty}=\cO(h_\cA^{s-d-2})\quad\text{if}\quad s\ge d+3.
\end{equation}

\begin{remark}\label{eval}
\rm
The function $\tilde u$ of the proof of Theorem~\ref{error_bound} may be used for the approximate 
evaluation of the solution and derived quantities such as gradient at any points in $\cM$,
which however requires construction of a suitable partition of unity $\Gamma$. Instead, for the same purpose
we may employ the local interpolation functions $\tilde u_\ell$ determined by \eqref{uellint}. Indeed, since 
$\tilde u_\ell$ is a kernel interpolant to $\tilde u$ on $X_\ell$, the sampling inequality \eqref{saminf} applied to
$v=(\tilde u-\tilde u_\ell)\circ \varphi_\ell^{-1}$ on $\Omega=U_\ell^\varphi$ with $r=s$ leads to the estimate
$$
|\tilde u(x)-\tilde u_\ell(x)|\le C h_\cA^{s-d/2}\|\tilde u\|_{H^s(\cM)},\quad x\in U_\ell,$$
where the factor $C$ is independent of $u$ and $h_\cA$, compare the proof of \eqref{lde2} in the above. By \eqref{b4} and
\eqref{eps} we have
$$
\|\tilde u\|_{H^s(\cM)}\le C h_\cA^{-\kappa}\|u\|_{H^s(\cM)}.$$
By taking into account \eqref{infb}, we arrive at the bound
\begin{equation}\label{infbell}
|u(x)-\tilde u_\ell(x)|\le C_1h_\cA^{s-2\kappa-d-r_0}\|u\|_{H^s(\cM)}
+C_2h_\cA^{s-\kappa-d/2-r_0}\|u\|_{H^{s+\kappa}(\cM)}, \quad x\in U_\ell,
\end{equation}
which shows that $\tilde u_\ell$ may be used as approximation of $u$ on $U_\ell$ as soon as 
$s>\max\{2\kappa,\tfrac{d}{2}+1\}+d$.
\end{remark}

%
%
%


\begin{remark}\label{chains}
\rm Condition \Aref{A3} prohibits small non-empty intersections $X_p\cap X_\ell$ because the images of any such set under 
$\varphi_p$ and $\varphi_\ell$ are required to be $(s+\kappa)$-determining sets. In fact, we only need that sufficiently
many intersections have this property. More precisely, we may replace \Aref{A3} by the following assumption:

\begin{enumerate}
\item[(\chain)]
Given $\sigma\in\NN$, $\chi<\infty$, $R<\infty$ and $\lambda<\infty$, assume that for each pair $\ell,p$ 
with $U_\ell\cap U_p\ne\emptyset$, there is a set $U_{\ell,p}\subset\cM$ and a chain $U_{p_0},U_{p_1},\ldots,U_{p_S}$,
with $p_0=\ell$, $p_S=p$ and $S\le\sigma$, such that
\begin{itemize} 
\item
$\bigcup_{i=0}^S U_{p_i}\subset U_{\ell,p}\subset \hat U_k^c$ for some $1\le k\le\hat n$, 
\item
$\chi(\hat\varphi_k(U_{\ell,p}))\le \chi$, 
\item
$\diam(\hat\varphi_k(U_{\ell,p}))\le R\max\big\{\!\diam(U_\ell^\varphi),\diam(U_p^\varphi)\big\}$,
\item
$\lambda_{s+\kappa}\big(\hat\varphi_k(U_{\ell,p}),\hat\varphi_k(X_{p_i}\cap X_{p_{i-1}})\big)\le\lambda$,
$i=1,\ldots,S$,\quad and
\item in the case $\ell=p$ we choose $S=1$, $U_{\ell,p}=U_\ell$ and $k$ such that $\varphi_\ell=\hat\varphi_k|_{U_\ell}$.
\end{itemize}
\end{enumerate}
The last condition ensures that  \Aref{A2} follows from  (\chain), with  $\chi_\cA\le\chi$, and that 
$\lambda_{s+\kappa}(U^\varphi_\ell,X^\varphi_\ell)\le\lambda$.
If we assume (\chain) instead of  \Aref{A2} and \Aref{A3}, then Theorem~\ref{error_bound} still holds, with $C_1$ and $C_2$
depending in addition on $\sigma$ and $R$. Indeed, we apply \Aref{A3} in the proof twice, 
namely to show \eqref{ldep} and \eqref{useA3}. 
While \eqref{ldep} uses the case  $\ell=p$, where there is no difference between (\chain) and \Aref{A3},
in order to replace \eqref{useA3} we write
$$
\tilde u_p-\tilde u_\ell=\sum_{i=1}^S \tilde u_{p_i}-\tilde u_{p_{i-1}},$$
and deduce
\begin{align*}
\|\partial^{\beta}\big((\tilde u_p-\tilde u_\ell)\circ \varphi_\ell^{-1}\big)\|_{L^2(U_\ell^\varphi)}
&\le C\|\partial^{\beta}\big((\tilde u_p-\tilde u_\ell)\circ \hat\varphi_k^{-1}\big)\|_{L^2(\hat\varphi_k(U_{\ell,p}))}\\
&\le C\sum_{i=1}^S
\|\partial^{\beta}\big((\tilde u_{p_i}-\tilde u_{p_{i-1}})\circ \hat\varphi_k^{-1}\big)\|_{L^2(\hat\varphi_k(U_{\ell,p}))}.
\end{align*}
In view of \eqref{uellint}, 
$$
\tilde u_{p_i}|_{X_{p_i}\cap X_{p_{i-1}}}=\hu|_{J_{p_i}\cap J_{p_{i-1}}}= 
\tilde u_{p_{i-1}}|_{X_{p_i}\cap X_{p_{i-1}}},\quad i=1,\ldots,S,$$
and hence by \eqref{sam2} and (\chain) we have for $|\beta|<s+\kappa$ and all $i=1,\ldots,S$,
$$
\|\partial^{\beta}\big((\tilde u_{p_i}-\tilde u_{p_{i-1}})\circ \hat\varphi_k^{-1}\big)\|_{L^2(\hat\varphi_k(U_{\ell,p}))}
\le C h_{\ell,p}^{s+\kappa-|\beta|}
|(\tilde u_{p_i}-\tilde u_{p_{i-1}})\circ \hat\varphi_k^{-1}|_{H^{s+\kappa}(\hat\varphi_k(U_{\ell,p}))},$$
where $h_{\ell,p}:=\diam(\hat\varphi_k(U_{\ell,p}))$, which may serve as a replacement
for \eqref{useA3} in the arguments leading to \eqref{utmutl} since $h_{\ell,p}\le R\,h_\cA$ by (\chain), and as in
 the proof of Lemma~\ref{admis} it can be shown that
$$
|(\tilde u_{p_i}-\tilde u_{p_{i-1}})\circ \hat\varphi_k^{-1}|_{H^{s+\kappa}(\hat\varphi_k(U_{\ell,p}))}\le
C\|\tilde u_{p_i}-\tilde u_{p_{i-1}}\|_{H^{s+\kappa}(\cM)},\quad i=1,\ldots,S.$$

\end{remark}

\begin{remark}\label{pnorm_est}
\rm Theorem~\ref{error_bound} remains valid if we define 
$\hat u\in\RR^n$ by minimizing a $p$-norm \eqref{pmin}
instead of least squares as in \eqref{leastsq}.
Indeed, the proof goes through with obvious changes. In particular, to show \eqref{eps}
with 
$$
\eps_1:=\|W\!M\hat u-Mf|_X\|_p\le \|W\!Mu|_X-Mf|_X\|_p
=\big\|\big\{\|\Wl u|_{X_\ell} - Lu|_{X_\ell}\|_p\big\}_{\ell=1}^m\big\|_p$$
we need a $p$-norm version of Lemma~\ref{admis}, which holds with $\sqrt{\mu_\cA}$ replaced by $\mu_\cA^{1/p}$.  
\end{remark}


\section{Numerical examples}\label{numerics}
The goal of this section is numerical verification of the convergence orders suggested by the 
error bounds of Theorem~\ref{error_bound}, in particular by the discrete maximum norm bound of \eqref{infb3}. 

We consider the $d$-dimensional sphere $\cM=\SSS^d$ embedded into $\RR^{d+1}$,
$$
\SSS^d=\{x\in\RR^{d+1}:\|x\|_2=1\},$$ 
and solve the equation
\begin{equation}\label{testp}
Lu:=-\Delta_\cM u+u = f\quad \text{in}\quad \cM,
\end{equation}
w.r.t.\ $u:\SSS^d\to\RR$, where $\Delta_\cM$ is the spherical Laplace-Beltrami operator, and $f:\SSS^d\to\RR$ is given by
$$
f(x)=\big(4d+5-s^2(x)\big)\sin s(x)
+d\,s(x)\cos s(x),\quad x\in\SSS^d, $$
where
$
s(x):=2\sum_{i=1}^{d+1}x_i$.
The exact solution of \eqref{testp} is
$$
u(x)=\sin s(x) =\sin\Big(2\sum_{i=1}^{d+1}x_i\Big),\quad x\in\SSS^d. $$

The operator $L$ in \eqref{testp} is the special case of \eqref{LB} with $\alpha=\kappa=1$. As discussed in Section~\ref{difop}, it satisfies
all assumptions required for the least squares method of Section~\ref{method} and for the error bounds of 
Theorem~\ref{error_bound}, as soon as $K$ is a zonal reproducing kernel for the Sobolev space $H^s(\cM)$ for sufficiently
large $s$.

We use the (scaled) Mat\'ern kernels 
$$
K(x,y)=M_{s,d}^\eps(x,y)=M_{s,d}(\eps x,\eps y),\qquad x,y\in \SSS^d,$$
where $M_{s,d}$ is given in \eqref{Matern}. As noticed in Section~\ref{pdk}, $M_{s,d}^\eps$ restricted to $\cM$ is a 
reproducing kernel for $H^s(\cM)$ whenever $s>d/2$. We discussed in Section~\ref{difop} that these kernels are
zonal and satisfy the hypotheses of Lemma~\ref{KL} with respect to the operator $L$. 
Since the order of $L$ is two, the method of Section~\ref{method} is well defined as soon as $s>d/2+1$. The assumption 
\eqref{As} used in the error bounds is satisfied whenever $s$ is an integer greater than $d/2+2$, in particular
$s\ge 3$ for $d=1$, $s\ge 4$ for $d=2$ or $3$.

In order to compute the differentiation matrices \eqref{Wl} we need evaluations of the kernel $K_L(x,y)=L_1K(x,y)$ for
arbitrary $x,y\in \SSS^d$. For this, we use the following general formula for the application of
the Laplace-Beltrami operator $\Delta_\cM$ to the first argument of any  zonal kernel $K$ in the \emph{$f$-form} 
 $K(x,y)=f(t)$, where $t=\|x-y\|_2^2/2=1-x^Ty$, $x,y\in \SSS^d$,
\begin{equation}\label{LBKfform}
K_{\Delta_\cM}(x,y)=t(2-t)f''(t) + d(1-t)f'(t).
\end{equation}
The formula can be easily shown by using \cite[Lemma 1.4.2]{DaiXu13}. For more information on the $f$-form, see \cite{SchabackMATLAB}
and \cite{mFDlab}.

For $d\in\{2,3\}$ we generate rather regular  sets of nodes on the unit sphere $\SSS^d$ by first producing a
quasi-random set with uniform probability distribution on $\SSS^d$, and then applying a thinning algorithm 
to increase the minimum separation of the nodes. More precisely, for a given $\delta>0$, we first generate $N_0$ 
points in $\RR^{d+1}$ with normally distributed components, where $N_0$ is obtained by rounding $\vol_d(\SSS^d)/\delta^d$ 
to the nearest integer. We then project these points to the sphere  $\SSS^d$, and run on them MATLAB's command \texttt{uniquetol} 
with absolute tolerance $2\delta$, to produce the final set $X$ whose cardinality will be denoted by $N$. 
 For each  $d\in\{2,3\}$ we create five sets of this type obtained with $\delta=4^{-i/d}\delta_0$, $i=0,1,\ldots,4$, where we
choose $\delta_0=0.1$ in 2D and $\delta_0=0.15$ in 3D. The cardinalities $N$ of the resulting sets $X$  can be found in
Tab.~\ref{NhA} below. An example of a node set $X$ obtained this way for the sphere $\SSS^2$ is shown in Fig.~\ref{setX}.

\begin{figure}[!ht]
\begin{center}
\includegraphics[width=7.5cm]{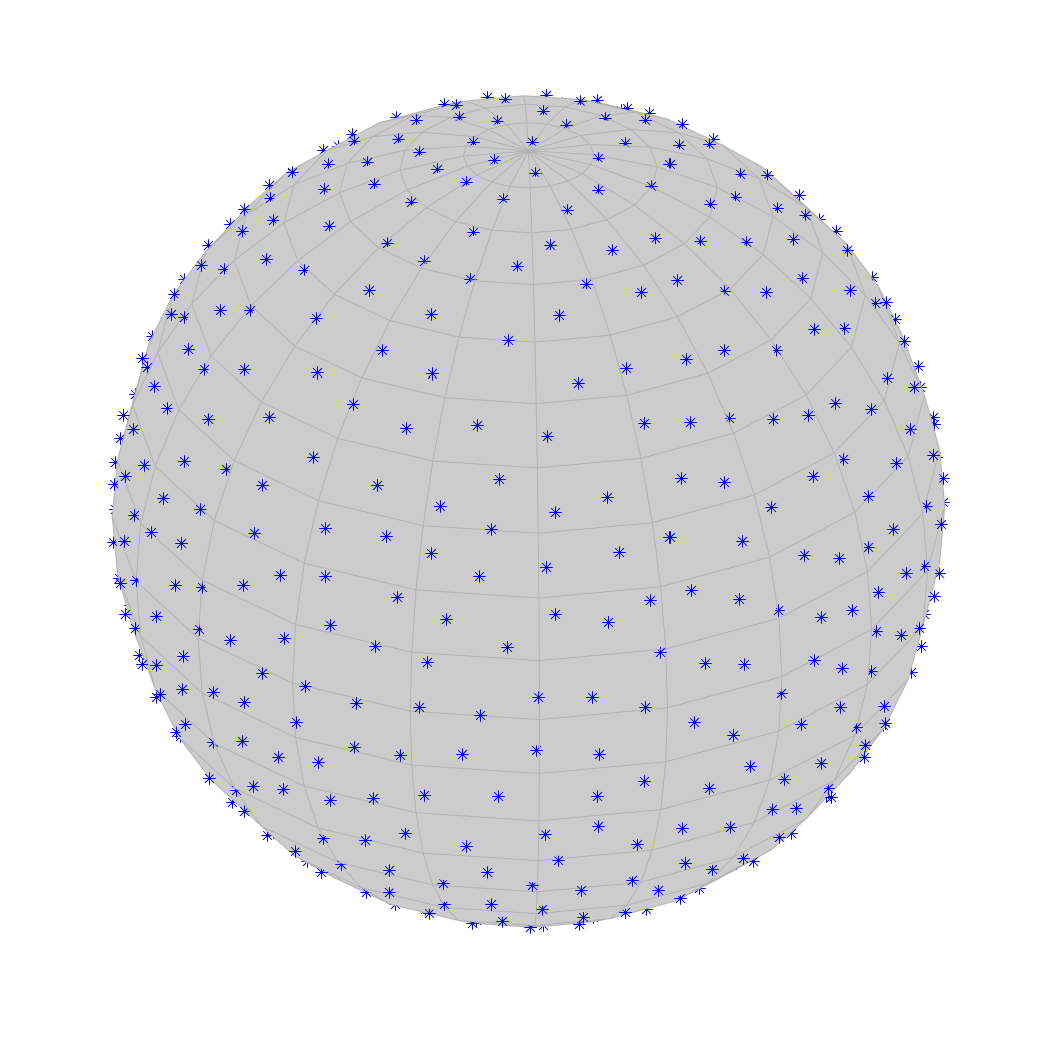}
\caption{Example of a node set $X$ in $\SSS^2$: 604 nodes obtained with $\delta=0.05$.
}
\label{setX}
\end{center}
\end{figure}

We run our method using the kernels $M_{s,d}^\eps$, $s=3,\ldots,7$, with the shape parameter $\eps$  reported in Tab.~\ref{snelle}. We choose the
set $X_\ell$ of \eqref{XA} for each $\ell=1,\ldots,N$ by collecting $n_\ell=2\dim\Pi^d_{s-1}$ nearest neighbors of the node
$x_\ell$ in $X$ with respect to the Euclidian distance in $\RR^{d+1}$. The numbers $n_\ell$ are also included in
Tab.~\ref{snelle}. In all cases these numbers were big enough in our experiments so that the approximation  order would  not
increase if $n_\ell$ were increased, even though Conditions \Aref{A3} or (\chain) are not guaranteed with this choice of
$X_\ell$.

\begin{table}[htbp!]\scriptsize 
\centering
\renewcommand{\arraystretch}{1.25}
\begin{tabular}{|c||c|c||c|c|c|c|c||c|c|}
\hline
& \multicolumn{2}{c||}{$\SSS^2$} & \multicolumn{2}{c|}{$\SSS^3$}\\
\hline
$s$ &  $n_\ell$ & $\eps$ & $n_\ell$ &  $\eps$\\
\hline\hline
3  & 6 & $2^{-6}$ & 8 & $2^{-9}$\\
\hline
4  & 12 & 0.5 & 20 & $2^{-4}$\\
\hline
5  & 20 & 2 & 40 & 0.25\\
\hline
6  & 30 & 3 & 70 & 1\\
\hline
\end{tabular}
\caption{The number of nearest neighbors $n_\ell$ used to generate the sets $X_\ell$ and the shape parameter $\eps$ of the kernel
$M_{s,d}^\eps$, chosen individually for each $s=3,\ldots,7$ in both 2D and 3D.
}
\label{snelle}
\end{table}

Recall that the error bound \eqref{infb3} is given in terms of the maximum diameter $h_\cA$ of the sets $U^\varphi_\ell$ that depend
on the choice of the atlas $\cA$. However, we do not generate either the sets $U_\ell$, the atlas $\cA$, or
the partition of unity $\Gamma$ as they are not needed for the computation of the discrete approximate solution $\hat u$.
Therefore we replace $h_\cA$ by the easily computable quantity
$$
h_\cX:=\max_{1\le \ell\le N}\tilde h_\ell,$$
where $\tilde h_\ell$ denotes the Euclidean diameter of $X_\ell\subset\RR^{d+1}$. By choosing the primary atlas
$\hat\cA$ for example as a number of half-spheres projected to appropriate tangent hyperplanes to $\SSS^d$ in $\RR^{d+1}$, we
may achieve that the quantities $h_\cA$ and  $h_\cX$ are equivalent in order when one of them goes to zero, for any 
admissible atlas $\cA$. 
The numbers $h_\cX$ depending on $d$,  $N$ and $s$ are reported in Tab.~\ref{NhA}. 
We have also estimated the quasi-uniformity of $\cX$ and $X$ by computing 
$$
q_\cX:=\max_{1\le i,j\le N}\frac{\tilde h_i}{\tilde h_j},\qquad 
q_{X,\cA}:=\max_{1\le \ell\le N}\frac{h_\cX}{2n_\ell^{1/d}\delta_\ell}$$
as replacement for $q_\cA$ and $q_{X,\cA}$. In all experiments both quantities were below 2.1.

\begin{table}[htbp!]\scriptsize 
\centering
\renewcommand{\arraystretch}{1.25}
\begin{tabular}{|c||c|c|c|c|c||c|c|c|c|}
\hline
\multirow{2}{6pt}{$N$} & \multicolumn{4}{|c|}{$s$}\\ 
\cline{2-5} & 3 & 4 & 5 & 6 \\
\hline
152   & 0.82 & 1.19 & 1.40 & 1.62 \\
604   & 0.45 & 0.61 & 0.78 & 0.90  \\
2,414 & 0.22 & 0.31 & 0.39 & 0.48  \\
9,560 & 0.12 & 0.16 & 0.21 & 0.25 \\
38,358& 0.06 & 0.08 & 0.10 & 0.13  \\
\hline
\end{tabular}
\hspace{48pt}
\begin{tabular}{|c||c|c|c|c|c||c|c|c|c|}
\hline
\multirow{2}{6pt}{$N$} & \multicolumn{4}{c|}{$s$}\\
\cline{2-5} & 3 & 4 & 5 & 6 \\
\hline
257     & 1.06 & 1.41 & 1.68 & 1.90\\
1,014   & 0.73 & 0.93 & 1.16 & 1.35\\
4,048   & 0.45 & 0.60 & 0.74 & 0.88\\
16,276  & 0.29 & 0.39 & 0.49 & 0.57\\
65,164  & 0.19 & 0.25 & 0.31 & 0.36\\
\hline
\end{tabular}
\medskip

(a) $\SSS^2$ \hspace{160pt} (b) $\SSS^3$
\caption{$h_\cX$: maximum diameter of subsets $X_\ell$ for (a) $\SSS^2$ and (b) $\SSS^3$, depending on $N$ and $s$. 
}
\label{NhA}
\end{table}

We plot the error  $\|u|_X-\hat u\|_\infty$ against $h_\cX$ in Fig.~\ref{error2D}
for $\SSS^2$ and in Fig.~\ref{error3D} for $\SSS^3$. The plots indicate convergence orders between $h_\cX^{s-3}$ and
$h_\cX^{s-2}$ for both $d=2,3$, which is higher than the theoretical estimate \eqref{infb3} that justifies $h_\cX^{s-4}$
in 2D and $h_\cX^{s-5}$ in 3D. Numerical convergence is observed for $s\ge3$ in 2D and  $s\ge4$ in 3D.

\begin{figure}[!ht]
\begin{center}
\includegraphics[width=10cm]{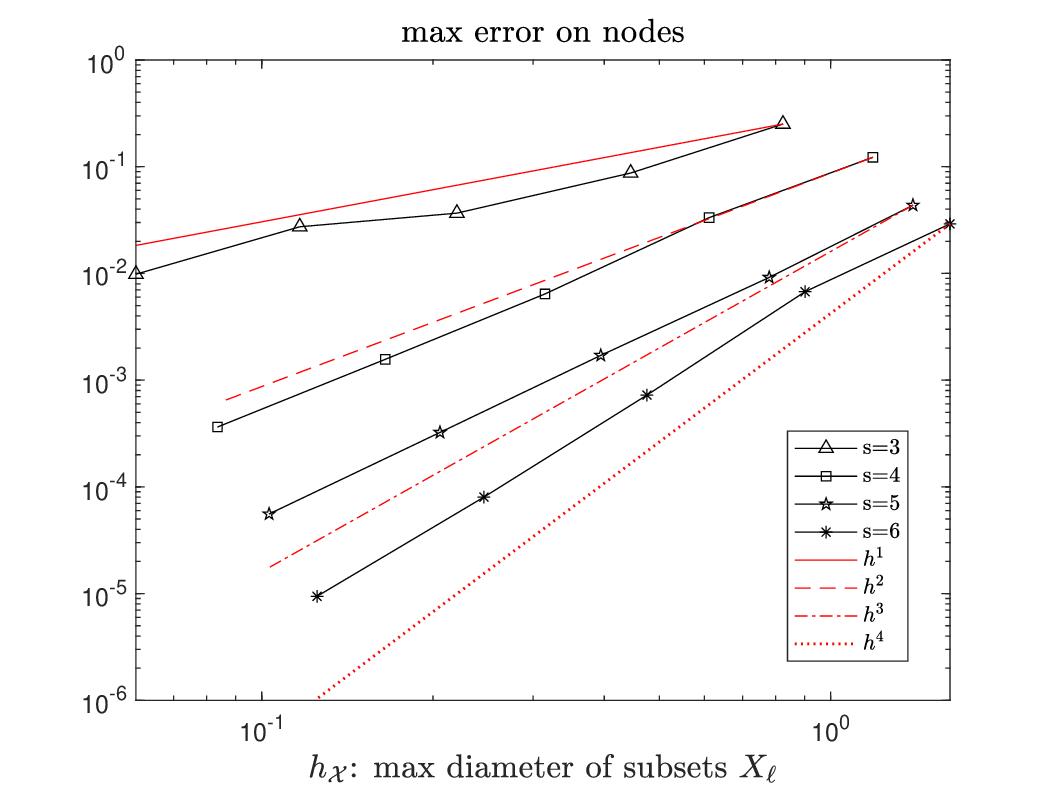}
\caption{Maximum error $\|u|_X-\hat u\|_\infty$ in 2D as function of $h_\cX$. For comparison we also include straight lines with slopes
corresponding to convergence orders between $h_\cX$ and $h^4_\cX$.
}
\label{error2D}
\end{center}
\end{figure}

\begin{figure}[!ht]
\begin{center}
\includegraphics[width=10cm]{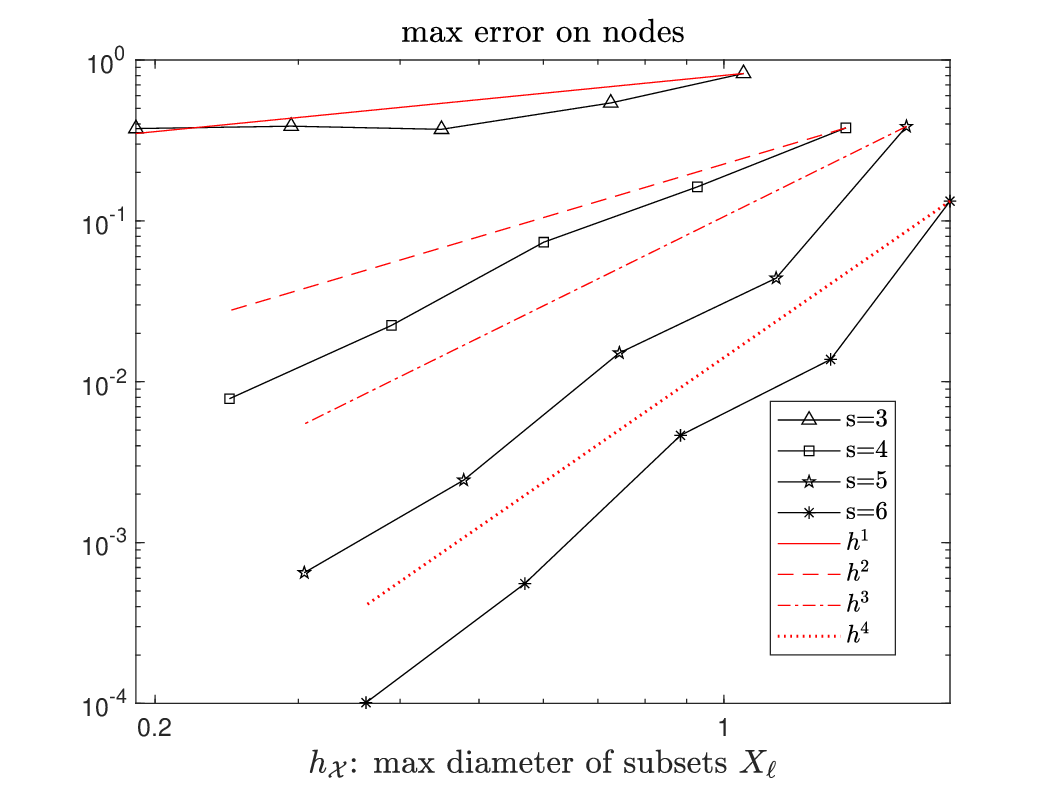}
\caption{Maximum error $\|u|_X-\hat u\|_\infty$ in 3D as function of $h_\cX$.
}
\label{error3D}
\end{center}
\end{figure}

Note that the values for $\eps$ in Tab.~\ref{snelle} are chosen
as small as possible such that the results are not yet affected by the rounding errors.
In most cases we choose $\eps$ as  an integer power of two, $\eps=2^j$ for some $j\in\ZZ$. 
However, in the case $s=6$ in 2D we make an exception and take
$\eps=3$ because $\eps=2$ gives errors severely affected by rounding for the
largest set $X$ produced with $\delta=1/16$, whereas $\eps=4$ leads to solutions whose errors are too high, 
see Fig.~\ref{error2Ds6}. 
This figure illustrates that the errors tend to be smaller for smaller $\eps>0$, without changing
the approximation order. However, when we decrease $\eps$, we get to the point that  the kernel matrices $M_{s,d}^\eps|_{X_\ell}$
become too ill-conditioned, affecting the computation of $W_\ell$ in \eqref{Wl}, 
which leads to a high error of the solution on denser sets $X$, as we see for $\varepsilon\in\{1,2\}$ in the figure.

\begin{figure}[!ht]
\begin{center}
\includegraphics[width=10cm]{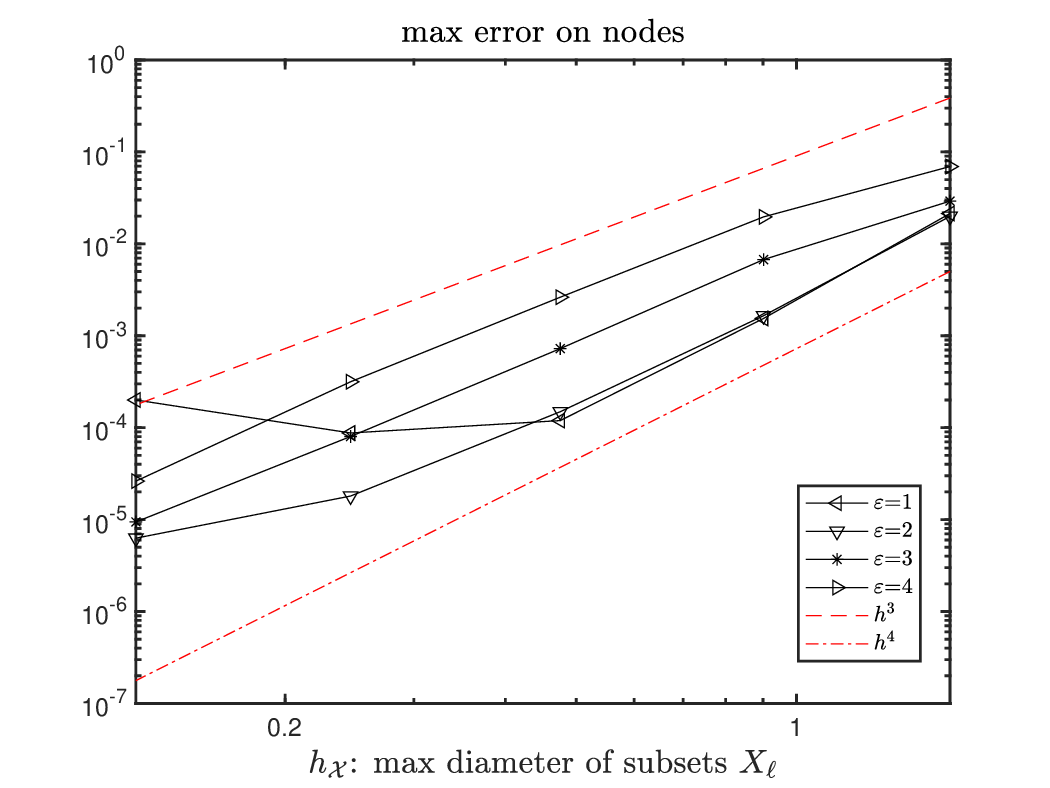}
\caption{Maximum error $\|u|_X-\hat u\|_\infty$ in 2D for $s=6$ and several choices of $\eps$.
}
\label{error2Ds6}
\end{center}
\end{figure}

\section{Conclusion}\label{conclusion}

In this paper we introduced a discrete least squares version of the meshless finite difference method for elliptic
differential equations on closed manifolds,
based on invertible local differentiation matrices for overlapping subsets of nodes. We obtained
error bounds for this method by a new technique, and tested its convergence properties numerically.
The sparse overdetermined linear system this method relies on has full rank without any additional assumptions like sufficient density of nodes.
In particular, it is not related to a discretization of any variational or continuous least squares formulation of the 
differential equation. Since no exact or approximate integration is involved, the  scheme does not require any partition of
the manifold or its ambient space and is purely meshless.

Numerical examples suggest better convergence orders of the method than predicted by the error bounds, which indicates that
theoretical results may be improved by refining the techniques introduced in the proof of Theorem~\ref{error_bound}. 
In particular, we expect that the factor $h^{-d/2}$ in the estimate of Lemma~\ref{extce} may be removed, which would allow to
somewhat improve the bounds \eqref{Hb} and \eqref{infb}, but will not be sufficient for explaining numerical results.
The assumption that the nodes are quasi-uniform in the sense that $q_{X,\cA}$ is uniformly bounded does not seem natural, but
is essential for the proof.

New ideas are needed in order to extend the method with guaranteed full rank of the system matrix to boundary value problems, 
conditionally positive definite kernels, or to the case when the elliptic operator $L$ is not necessarily self-adjoint with
respect to the inner product of $H^s(\cM)$ generated by the kernel $K$. 
In particular, conditionally positive definite kernels may help to improve the conditioning of the least squares problem
\eqref{ods2}, as they are known to do in the RBF-FD method on domains in $\RR^d$ \cite{BFFB17}, and remove the need for tuning 
the shape parameter $\varepsilon$ we observed in Section~\ref{numerics}.


\bigskip

\noindent
\textbf{Conflict of Interest Statement.} The author has no competing interests to
declare that are relevant to the content of this article.

\bibliographystyle{abbrv} %
\bibliography{meshless}

\end{document}